\newcommand{\conn}{\longleftrightarrow}
\newcommand{\src}{\mathscr{C}}
\newcommand{\R}{\mathbb{R}}
\newcommand{\E}{\mathbb{E}}
\newcommand{\pp}{\mathbb{P}}
\newcommand{\kC}{\mathcal{C}}
\newcommand{\kP}{\mathcal{P}}
\newcommand{\kS}{\mathcal{S}}
\newcommand{\kL}{\mathcal{L}}
\newcommand{\bB}{\boldsymbol{B}}
\newcommand{\lin}{\left[\kern-0.15em\left[}
\newcommand{\rin} {\right]\kern-0.15em\right]}
\newcommand{\linf}{[\kern-0.15em [}
\newcommand{\rinf} {]\kern-0.15em ]}
\newcommand{\ilin}{\left]\kern-0.15em\left]}
\newcommand{\irin} {\right[\kern-0.15em\right[}
\newtheorem{theorem}{Theorem}[section]
\newtheorem{lemma}[theorem]{Lemma}
\newtheorem{corollary}[theorem]{Corollary}
\newtheorem{conjecture}[theorem]{Conjecture}
\newtheorem{remark}[theorem]{Remark}
\newtheorem{cond}[theorem]{Condition}
\newcommand{\be}{\begin{equation}}
\newcommand{\ee}{\end{equation}}
\newcommand{\beal}{\begin{align}}
\newcommand{\eal}{\end{align}}
\newcommand{\nn}{\nonumber}
\newcommand{\expec}{\mathbb E}
\newcommand{\sss}{\scriptscriptstyle}
\numberwithin{equation}{section}
\newcommand{\e}{{\rm e}}
\newcommand{\rem}[1]{}
\newcommand{\RvdH}[1]{\todo[inline, color=magenta]{{\rm Remco: #1}}}
\newcommand{\Hao}[1]{\todo[inline, color=green]{#1}}
\def\1{{\mathchoice {1\mskip-4mu\mathrm l}      % Blackboard bold 1
{1\mskip-4mu\mathrm l}
{1\mskip-4.5mu\mathrm l} {1\mskip-5mu\mathrm l}}}
\newcommand{\indic}[1]{\1_{\{#1\}}}
\newcommand{\eqn}[1]{\begin{equation}#1\end{equation}}
\newcommand{\eq}[1]{\begin{equation*}#1\end{equation*}}
\newcommand{\eqa}[1]{\begin{align*}#1\end{align*}}
\newcommand{\eqan}[1]{\begin{align}#1\end{align}}
\newcommand{\convp}{\stackrel{\sss {\mathbb P}}{\longrightarrow}}
\newcommand{\convd}{\stackrel{\sss {d}}{\longrightarrow}}
\newcommand{\bsigma}{\boldsymbol{\sigma}}
\newcommand{\bomega}{\boldsymbol{\omega}}
\newcommand{\bxi}{\boldsymbol{\xi}}
\newcommand{\bw}{\boldsymbol{w}}
\DeclareSymbolFont{extraup}{U}{zavm}{m}{n}
\DeclareMathSymbol{\varheart}{\mathalpha}{extraup}{86}
\DeclareMathSymbol{\vardiamond}{\mathalpha}{extraup}{87}
\newcommand{\ensymboldefinition}{$\blacktriangleleft$}
\newcommand{\vertex}{o}
\newcommand{\Tree}{{\sf T}}
\newcommand{\invisible}[1]{}
\newcommand{\invisibleRvdH}[1]{}
\begin{document}

\title{Random cluster models on random graphs}
%\author{Van Hao Can}
%\address{Van Hao Can, Research Institute for Mathematical Sciences, Kyoto University, 606--8502 Kyoto, Japan 
% \&	Institute of Mathematics, Vietnam Academy of Science and Technology, 18 Hoang Quoc Viet, 10307 Hanoi, Vietnam, cvhao89@gmail.com}
%\author{Cristian Giardin\`a}
%\address{Cristian Giardin\`a, University of Modena and Reggio Emilia, via G. Campi 213/b, 41125 Modena, Italy, cristian.giardina@unimore.it}
%\author{Claudio Giberti}
%\address{Claudio Giberti, University of Modena and Reggio Emilia, via G. Amendola 2, Pad. Morselli, 42122 Reggio E., Italy, claudio.giberti@unimore.it}
\author{Van Hao Can }
%\email{cvhao89@gmail.com}
\address{ 
Institute of Mathematics, Vietnam Academy of Science and Technology, 18 Hoang Quoc Viet, 10307 Hanoi, Vietnam, cvhao@math.ac.vn}
\author{Remco van der Hofstad}
\address{Remco van der Hofstad, Eindhoven University of Technology, P.O. Box 513, 5600 MB Eindhoven, The Netherlands, r.w.v.d.hofstad@tue.nl}

%\author{\\Guido Janssen}
%\address{Guido Janssen, Eindhoven University of Technology, P.O. Box 513, 5600 MB Eindhoven, The Netherlands, a.j.e.m.janssen@tue.nl}
%\author{Neeladri Maitra}
%\address{Neeladri Maitra, Eindhoven University of Technology, P.O. Box 513, 5600 MB Eindhoven, The Netherlands, n.maitra@tue.nl}

\begin{abstract}
On locally tree-like random graphs, we relate the random cluster model with external magnetic fields and $q\geq 2$ to Ising models with vertex-dependent external fields. The fact that one can formulate general random cluster models in terms of two-spin ferromagnetic Ising models is quite interesting in its own right. However, in the general setting, the external fields are both positive and negative, which is mathematically unexplored territory. Interestingly, due to the reformulation as a two-spin model, we can show that the Bethe partition function, which is believed to have the same pressure per particle, is always a {\em lower bound} on the graph pressure per particle. We further investigate special cases in which the external fields do always have the same sign. 

The first example is the Potts model with general external fields on random $d$-regular graphs. In this case, we show that the pressure per particle in the quenched setting agrees with that of the annealed setting, and verify \cite[Assumption 1.4]{BasDemSly23}. We show that there is a line of values for the external fields where the model displays a first-order phase transition. This completes the identification of the phase diagram of the Potts model on the random $d$-regular graph. As a second example, we consider the high external field and low temperature phases of the system on locally tree-like graphs with general degree distribution.

%The second example is the random cluster model for $q\in(1,2)$, which interpolates between percolation and the Ising model. %We show that the phase transition is second order, and compute the critical value explicitly.    
\end{abstract}

\maketitle

%\tableofcontents

\section{Random cluster models}
\label{sec-|R|C}
In this section, we motivate the problem, introduce the models, and state our main results.
\subsection{Introduction and motivation}
\label{sec-motivation}
Random cluster models on random graphs have received significant attention in the literature as a key example of percolation models with {\em dependent} edges. See \cite{Grim06} for an exposition of the random cluster model, also sometimes called FK-percolation after the pioneers Fortuin and Kasteleyn \cite{ForKas72}, who invented the model.  We also refer to \cite{Fort72a,Fort72b} for further properties of the random cluster model. In particular, the random cluster model can be used to study various important models in statistical mechanics in one unified way. The models include percolation for $q=1$, the Ising model for $q=2$, and the general $q$-state Potts model for $q\geq 3$. These models are paradigmatic models in statistical mechanics in that they all display a {\em phase transition} on most infinite graphs, and this phase transition can have rather different shapes. 
\smallskip

In this paper, we generalise the recent results of Bencs, Borb\'enyi and Csikv\'ary \cite{BenBorCsi23} that translates the partition function of the random cluster model on general graphs having not too many short cycles to include a magnetic field. There are many papers that study Ising, Potts and random cluster models on random graphs. For the Ising model, let us refer to \cite{Can17,Can19,CanGiaGibHof22,DemMon10b,DemMon10a,DemMonSun13,DomGiaHof10,DomGiaHof12} to mention just a few. An overview of results is given in \cite{Hofs25}. For the Ising model, the picture is quite clear. The thermodynamic limit of the logarithm of the partition function is proved to exist for general parameters, as well as general random graphs models whose local limit is a (random) tree. From this, the nature of the phase transition can be deduced. 
\smallskip

While there are some results on Potts and random cluster models \cite{BasDemSly23,BenBorCsi23,DemMonSlySun14,GiaGibHofJanMai25,HelJenPer23}, the picture there is less complete. In particular, all results on random graphs apply to the random $d$-regular graph, or to annealed settings. It is shown that the pressure per particle exists in the absence of a magnetic field, but not yet in the presence of an external field. This makes it more difficult to study the nature of the phase transition in terms of the existence of a dominant state. Potts and random cluster models have also been studied from the combinatorial perspective, due to their links with Tutte polynomials,  and the algorithmic perspective, in terms of the complexity of computing the partition function. We refer to \cite{BenBorCsi23,HelJenPer23} for extensive literature overviews on these topics.

\subsection{Model}
\label{sec-model}
In this section, we state the models that we will consider. We start by describing the random cluster model, followed by a discussion of locally tree-like graphs.
\smallskip

\paragraph{\bf The random cluster model.} We follow \cite{Grim06}. Let $G=(V(G),E(G))$ be a finite graph with vertex set $V(G)$ and edge set $E(G)$. We assume that $|V(G)|=n$. The random cluster measure is denoted by $\phi_{p,q}$, and gives a measure on configurations $\bomega=(\omega(e))_{e\in E}$, where the edge variables $\omega(e)$ satisfy $\omega(e)\in \{0,1\}$. We think of $e$ for which $\omega(e)=1$ as being {\em present,} and $e$ for which $\omega(e)=0$ as being {\em vacant}. The random cluster measure $\phi_{p,q}$ is given by
	\eqn{
	\label{RC-measure}
	\phi_{p,q}(\bomega)=\frac{1}{Z_{n,\rm RC}(p,q)} q^{k(\bomega)} \prod_{e\in E(G)} p^{\omega(e)}(1-p)^{1-\omega(e)},
	}
where $q>0$ and $p\in [0,1]$, and $k(\bomega)$ denotes the number of connected components of the subgraph $G_p$ with vertex set $V(G)$ and edge set $E_p(G)=\{e\in E(G)\colon \omega(e)=1\}$. The partition function, or normalising constant, $Z_{n,\rm RC}(p,q)$ is given by
	\eqn{
	\label{RC-partition}
	Z_{n,\rm RC}(p,q)=\sum_{\bomega} q^{k(\bomega)} \prod_{e\in E(G)} p^{\omega(e)}(1-p)^{1-\omega(e)},
	}
where the sum is over all bond variables $\omega(e)\in \{0,1\}$.
\medskip

\paragraph{\bf The relation to Potts models.} In the above, $q>0$ is general, as it will often be in this paper. We now specialise to $q\in \{1,2,3\ldots\}$, and relate the above model to the Potts model. For this, we need to specify vertex variables $\bsigma=(\sigma_x)_{x\in V(G)}\in [q]^{V(G)}$, where $[q]=\{1, \ldots, q\}$. We first define a {\em joint law} $\mu_{p,q}(\bsigma,\bomega)$ given by
	\eqn{
	\label{RC-B=0}
	\mu_{p,q}(\bsigma,\bomega)=\frac{1}{Z_{n}(p,q)} \prod_{\{x,y\}\in E(G)} 
	\Big[(1-p)\indic{\omega(\{x,y\})=0}+p\indic{\omega(\{x,y\})=1}\indic{\sigma_x=\sigma_y}\Big],
	}
where the partition function, or normalising constant, $Z_{n}(p,q)$ is now given by
	\eqn{
	\label{RC-B=0-Z}
	Z_{n}(p,q)=\sum_{\bsigma, \bomega}  \prod_{\{x,y\}\in E(G)} 
	\Big[(1-p)\indic{\omega(\{x,y\})=0}+p\indic{\omega(\{x,y\})=1}\indic{\sigma_x=\sigma_y}\Big],
	}
and the sum is over all bond variables $\omega(e)\in \{0,1\}$ and vertex variables $\sigma_v\in [q]$ for all $v\in V(G)$.
\smallskip

The key coupling result is that (cf.\ \cite[Theorem (1.10)]{Grim06}) for $p=1-\e^{-\beta}$ and $q\in {\mathbb N}$, the spin marginal $\bsigma \mapsto \mu_{p,q}(\bsigma)$ obtained by summing out over the bond variables is the Potts model with parameters $\beta$ and $q$, while the bond marginal $\bomega \mapsto \mu_{p,q}(\bomega)$ obtained by summing out over the vertex spins is the random cluster measure. 
\medskip

Equations \eqref{RC-B=0}--\eqref{RC-B=0-Z} define the random cluster and Potts model without an external field, and we now extend it to an external field. For this, we define the Potts model partition function, with inverse temperature $\beta\geq 0$ and external field $B$, as
	\eqn{
	\label{NC-Potts}
	Z_G^{\sss \rm Potts}(q,\beta,B)=\sum_{\bsigma\in [q]^{V(G)}} \e^{-H(\bsigma)},
	}
where 
	\eqn{
	\label{Hamiltonian-Potts}
	H(\bsigma)=H_{\beta,B}(\bsigma)=-\beta\sum_{\{u,v\}\in E(G)} \indic{\sigma_u=\sigma_v}-B\sum_{v\in V(G)} \indic{\sigma_v=1}.
	}
Note that $Z_G^{\sss \rm Potts}(q,\beta,0)=\e^{-\beta|E(G)|}Z_{n}(p,q)$ with $p=1-\e^{-\beta}$, but that will play no role in what follows. In this paper, we investigate this partition function, as well as its random cluster model extension, which we define next. 
\smallskip
	
\paragraph{\bf The random cluster model with external field.} We next define the main object of study, which is the random cluster model with a weight that we think of as an {\em external field}. For Potts and Ising models, this weighted model is exactly the Potts/Ising model with an external field, while the definition extends to non-integer $q$. Let $q>1$ be a positive real number, and $w=\e^{\beta}-1$. Then, we define the random cluster measure with external field $B$, inspired by \cite{BisBorChaKot00},  by
	\eqn{ 
	\label{rcb}
	\mu_G(\bomega) = \frac{W(\bomega)}{Z_G(q,w,B)}, \qquad \text{and} \qquad
	W(\bomega)=\prod_{e\in E(G)} w^{\omega_e}
	\prod_{C\in\mathscr{C}(\bomega)}
	(1+(q-1)\e^{-B|C|}),
	}
where the normalisation constant $Z_G(q,w,B)$ is given by
	\eqn{
	\label{partition-function-B}
	Z_G(q,w,B)=\sum_{\bomega}\prod_{e\in E(G)} w^{\omega_e}
	\prod_{C\in\mathscr{C}(\bomega)}
	(1+(q-1)\e^{-B|C|}),
	}
where $\mathscr{C}(\bomega)$ is the collection of connected components in $\bomega$. It is this measure that we will focus on in this paper. The following lemma relates the partition function for the Potts model with external field $B$ to $Z_G(q,w,B):$

\begin{lemma}[Relation Potts and random cluster partition function]
\label{pott=rc}
Let $q\geq 2$ be an integer, and let $G=(V(G), E(G))$ be a finite graph. Then, with $\beta =\log(1+w)$,
		\eq{
			Z_G^{\sss \rm Potts}(\beta,B) = \e^{B|V(G)|} Z_G(q,w,B).
	}
	\end{lemma}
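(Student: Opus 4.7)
The plan is to run the standard Edwards--Sokal-type expansion on the Potts partition function \eqref{NC-Potts}, now carrying along the magnetic field through the component sums. The calculation is essentially linear, and the key obstacle (if any) is just bookkeeping the external field contribution so that it collapses into the component-weight appearing in \eqref{rcb}.

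First I would rewrite the edge factors using the identity
\[
\e^{\beta\indic{\sigma_u=\sigma_v}} = 1 + (\e^{\beta}-1)\indic{\sigma_u=\sigma_v} = 1 + w\,\indic{\sigma_u=\sigma_v},
\]
and expand the product over edges by introducing binary variables $\omega_e\in\{0,1\}$:
\[
\prod_{\{u,v\}\in E(G)} \e^{\beta \indic{\sigma_u=\sigma_v}}
= \sum_{\bomega} w^{\sum_e \omega_e} \prod_{e=\{u,v\}:\,\omega_e=1} \indic{\sigma_u=\sigma_v}.
\]
Substituting into $Z_G^{\sss \rm Potts}(\beta,B)$ and swapping the order of summation gives
\[
Z_G^{\sss \rm Potts}(\beta,B)
= \sum_{\bomega} w^{\sum_e \omega_e}
\sum_{\bsigma\in[q]^{V(G)}}\Big[\prod_{e=\{u,v\}:\,\omega_e=1} \indic{\sigma_u=\sigma_v}\Big]
\e^{B\sum_{v\in V(G)}\indic{\sigma_v=1}}.
\]

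Next I would observe that the indicator product forces $\bsigma$ to be constant on each connected component $C\in\mathscr{C}(\bomega)$ of the subgraph $(V(G),E_p(G))$. Thus the spin sum factorises over components:
\[
\sum_{\bsigma}[\cdots]\e^{B\sum_v \indic{\sigma_v=1}}
= \prod_{C\in\mathscr{C}(\bomega)} \sum_{s\in[q]} \e^{B|C|\indic{s=1}}
= \prod_{C\in\mathscr{C}(\bomega)} \bigl(\e^{B|C|}+(q-1)\bigr).
\]
Factoring $\e^{B|C|}$ out of each component factor yields $(1+(q-1)\e^{-B|C|})$, exactly the weight in \eqref{partition-function-B}.

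Finally, since the components of $\bomega$ partition $V(G)$, we have $\sum_{C\in\mathscr{C}(\bomega)}|C|=|V(G)|$, so the aggregate exponential prefactor is $\e^{B|V(G)|}$, independent of $\bomega$. Pulling it outside the $\bomega$-sum gives
\[
Z_G^{\sss \rm Potts}(\beta,B)
= \e^{B|V(G)|} \sum_{\bomega} \prod_{e\in E(G)} w^{\omega_e} \prod_{C\in\mathscr{C}(\bomega)}\bigl(1+(q-1)\e^{-B|C|}\bigr)
= \e^{B|V(G)|}\,Z_G(q,w,B),
\]
as required. The only mildly delicate step is the component-wise spin sum, but since the constraints along edges $e$ with $\omega_e=1$ are precisely an equivalence relation whose classes are the components $C$, the factorisation is immediate.
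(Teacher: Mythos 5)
Your proof is correct and complete: the Edwards--Sokal expansion of the edge factors $\e^{\beta\indic{\sigma_u=\sigma_v}}=1+w\,\indic{\sigma_u=\sigma_v}$, the component-wise factorisation of the spin sum into $\e^{B|C|}+(q-1)$, and the observation that $\prod_{C}\e^{B|C|}=\e^{B|V(G)|}$ because the components partition $V(G)$ are exactly the right steps. The paper states this lemma without proof (it is the standard coupling computation, implicit in the cited work of Biskup, Borgs, Chayes and Koteck\'y), so there is no alternative argument to compare against; your derivation is the canonical one and fills the gap correctly.
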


We next describe the random graph models that we will be working on.
\medskip

\paragraph{\bf Random graphs and local convergence.} In the above, the finite graph $G=(V(G),E(G))$ is general. Here, we introduce the kind of random graphs that we shall work on.
We will consider graph sequences $(G_n)_{n\geq}$ of growing size, where $G_n=(V(G_n),E(G_n))$, with $V(G_n)$ and $E(G_n)$ denoting the vertex and edge sets of $G_n$, respectively. 
We assume that $|V(G_n)|=n$, and assume that $n$ tends to infinity. 
\begin{cond}[Uniform sparsity]
\label{cond-uniform-sparse}
{\rm We assume that $(G_n)_{n\geq}$ are {\em uniformly sparse}, in the sense that $D_n$, the degree of a random vertex in $V(G_n)$, satisfies that
	\eqn{
	\label{degree-convergence}
	D_n\convd D,
	\qquad
	\expec[D_n]\rightarrow \expec[D],
	}
for some limiting degree distribution $D$.}\hfill \ensymboldefinition
\end{cond}
\smallskip

We will strongly rely on {\em local convergence}, as introduced by Benjamini and Schramm in \cite{BenSch01}, and, independently by Aldous and Steele in \cite{AldSte04}. We refer to \cite[Chapter 2]{Hofs24} for an overview of the theory, and  \cite[Chapters 3-5]{Hofs24} for several examples. By \cite{BenLyoSch15}, \eqref{degree-convergence} implies that the graph sequence $(G_n)_{n\geq}$ is precompact in the local topology, meaning that every subsequence has a further subsequence that converges in the local topology. See also \cite{AldLyo07} for further discussion. Our main assumption is that the graph sequence $(G_n)_{n\geq}$ is {\em locally tree-like}:

\begin{cond}[Locally tree-like random graph sequences]
\label{cond-locally-tree-like}
{\rm We assume that $(G_n)_{n\geq}$ are {\em locally tree-like}, in the sense that $G_n$ converges locally in probability to a rooted graph $(G,\vertex)$ that is almost surely a tree.}\hfill \ensymboldefinition
\end{cond}
\smallskip

Conditions \ref{cond-uniform-sparse} and \ref{cond-locally-tree-like} have a long history in considering Ising and Potts models on random graphs. See e.g., \cite{DemMon10a,DemMon10b,DemMonSlySun14,DomGiaHof10} for some of the relevant references. Below, and throughout the paper, we call a graph sequence $(G_n)_{n\geq}$ {\em locally tree-like} when Conditions \ref{cond-uniform-sparse} and \ref{cond-locally-tree-like} both hold.
\medskip

\paragraph{\bf Organisation of this section} This section is organised as follows. We start by describing our main results, which are divided into three parts. In Section \ref{sec-main-results}, we rewrite the random cluster partition function with general external field $B>0$ in terms of an extended Ising model with specific vertex-dependent external fields. In Section \ref{sec-Bethe}, we use this representation to give a lower bound on the partition function by the Bethe partition function. In Section \ref{sec-|R|C-d-regular}, we use these results to analyse the phase transition of the Potts model on random $d$-regular graphs. In Section \ref{sec-eIsing-pressure}, we describe the results for the extended Ising model, and their consequences on the random cluster model for high external magnetic fields and low temperature. We close in Section \ref{sec-discussion-open-problems} by discussing  our results and stating open problems.

\subsection{Main results for general graphs}
\label{sec-main-results}
In this section, we explain how the partition function of the random cluster model with non-negative external field can be rewritten in terms of an Ising model with vertex-dependent external fields.
\medskip

\paragraph{\bf Rank-2 approximation to random cluster model with external fields.}
We first give bounds on the partition function of the random cluster model with external field, extending the work of Bencs, Borb\'enyi and Csikv\'ary \cite{BenBorCsi23} to non-zero external fields:

\begin{theorem}[Rank-2 approximation of random cluster partition function with external field]
		\label{thm-B>0}
		Let $G=(V(G),E(G))$ be a finite graph. Then if $q\geq 2$ and $B \geq 0$,
		\eqn{
			Z^{\sss (2)}_G(q,w,B)\leq Z_G(q,w,B)\leq q^{\kL(G)}Z^{\sss (2)}_G(q,w,B),
		}
		where 
		\eqn{ \label{mdcy}
			\kL(G)= \max_{A \subseteq E} |\{ \text{\rm connected components of $G_A=(V(G),A)$ containing a cycle}\}|,
		}
		and
		\eqan{
			\label{Z-2-form}
			Z^{\sss (2)}_G(q,w,B)
			&= \sum_{S\subseteq V(G)}(1+w)^{|E(S)|}\left(1+\frac{w}{q-1}\right)^{|E(V(G)\setminus S)|} ((q-1)\e^{-B})^{|V(G)|-|S|}.
		}
\end{theorem}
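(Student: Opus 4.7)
The plan is to expand both $Z_G(q,w,B)$ and the rank-$2$ expression $Z^{(2)}_G(q,w,B)$ as sums over edge subsets $A\subseteq E(G)$, arriving at two expressions that \emph{factorise over the connected components} $\mathscr{C}(A)$ of the spanning subgraph $(V(G),A)$. Once both are put in this form, the two inequalities follow from a single term-by-term comparison that is controlled by the cyclomatic numbers $b_1(C)=|E(C)|-|C|+1$ of the components of $A$.

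First I would rewrite $Z^{(2)}_G(q,w,B)$. Expanding the binomials $(1+w)^{|E(S)|}=\sum_{A_1\subseteq E(S)}w^{|A_1|}$ and $\bigl(1+\tfrac{w}{q-1}\bigr)^{|E(V(G)\setminus S)|}=\sum_{A_2\subseteq E(V(G)\setminus S)}(w/(q-1))^{|A_2|}$ and setting $A=A_1\cup A_2$ yields a double sum over $(S,A)$ with weight $w^{|A|}(q-1)^{-|A\cap E(V(G)\setminus S)|}((q-1)\e^{-B})^{|V(G)\setminus S|}$. Swapping the order of summation, the admissible $S$'s for a fixed $A$ are precisely those containing no cross edge of $A$, i.e., unions of connected components of $(V(G),A)$. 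Summing over such $S$ factorises over components, and using $|C|-|E(C)|=1-b_1(C)$ I arrive at
\eqn{
\label{Z2plan}
Z^{(2)}_G(q,w,B)=\sum_{A\subseteq E(G)}w^{|A|}\prod_{C\in\mathscr{C}(A)}\Bigl(1+(q-1)^{1-b_1(C)}\e^{-B|C|}\Bigr).
}

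Comparing \eqref{Z2plan} with the direct expansion $Z_G(q,w,B)=\sum_A w^{|A|}\prod_C(1+(q-1)\e^{-B|C|})$, the ratio of the $A$-term in $Z_G$ to that in $Z^{(2)}_G$ equals $\prod_{C\in\mathscr{C}(A)}\tfrac{1+(q-1)\e^{-B|C|}}{1+(q-1)^{1-b_1(C)}\e^{-B|C|}}$. For tree components $b_1(C)=0$ the corresponding factor equals $1$. For a cycle-containing component, viewing the factor as a function of $t=\e^{-B|C|}\in[0,1]$, a short derivative computation (using $(q-1)\geq(q-1)^{1-b_1(C)}$ when $q\geq 2$ and $b_1(C)\geq 1$) shows that it is non-decreasing in $t$; hence it lies in $[1,\,q/(1+(q-1)^{1-b_1(C)})]\subseteq[1,q]$. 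Multiplying over the $\ell(A)$ cycle-containing components gives an overall ratio in $[1,q^{\ell(A)}]$, and by the definition \eqref{mdcy} of $\mathcal{L}(G)$ we have $\ell(A)\leq\mathcal{L}(G)$. Summing term-wise in $A$ with the common weights $w^{|A|}$ then delivers both the lower and the upper bound.

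The main bookkeeping step is the identity \eqref{Z2plan}: the constraint that $S$ respects all components of $A$ has to be set up correctly, and the cyclomatic-number identity then has to be applied on each component separately. Once \eqref{Z2plan} is in place, the rest is a one-variable monotonicity check. I expect this to be the cleanest route; it also explains transparently why the two bounds collapse for $q=2$, since then $(q-1)^{1-b_1(C)}=1=q-1$ and the ratio is identically $1$, recovering the fact that the Ising partition function is itself already of ``rank $2$''.
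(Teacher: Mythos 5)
Your proposal is correct, and it takes a genuinely different route from the paper. You verify the key identity
\eqn{
Z^{\sss (2)}_G(q,w,B)=\sum_{A\subseteq E(G)}w^{|A|}\prod_{C\in\mathscr{C}(A)}\Bigl(1+(q-1)^{1-b_1(C)}\e^{-B|C|}\Bigr),
}
which is exact (the admissible $S$ for fixed $A$ are indeed the unions of components of $(V(G),A)$, and $|C|-|E_A(C)|=1-b_1(C)$), and then both bounds follow from the termwise comparison with $Z_G(q,w,B)=\sum_A w^{|A|}\prod_C(1+(q-1)\e^{-B|C|})$: the per-component ratio is $1$ on trees, lies in $[1,q]$ on cyclic components (using $q\geq 2$ for the lower end and $\e^{-B|C|}\leq 1$, i.e.\ $B\geq 0$, for the upper end), and the number of cyclic components is at most $\kL(G)$. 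The paper instead proves the two bounds by separate arguments: the lower bound goes through Lemma \ref{zqrec}, which reduces $Z_G(q,w,B)$ to $\sum_S \e^{B(|S|-|V(G)|)}(1+w)^{|E(S)|}Z_{G\setminus S}(q-1,w)$ via the Potts representation for integer $q$ and a polynomiality-in-$q$ argument, combined with the rank-1 lower bound $Z_{G'}(q-1,w)\geq (q-1)^{|V(G')|}(1+w/(q-1))^{|E(G')|}$ from \cite{BenBorCsi23}; the upper bound (Lemma \ref{upperzq}) first pays $q^{\kL(G)}$ to discard cyclic components and then expands over tree components and compatible vertex sets, bounding the resulting forest sum by the full binomial. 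Your single identity unifies the two bounds, avoids the Potts/polynomiality detour, and makes transparent why the bounds are exact for $q=2$; the paper's route is more modular in that it reuses the lemmas of \cite{BenBorCsi23} verbatim. One small point worth making explicit in your write-up: the termwise summation uses that all terms are non-negative, i.e.\ $w\geq 0$ — the same implicit standing assumption ($w=\e^\beta-1$, $\beta\geq 0$) that the paper makes.
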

The proof of Theorem \ref{thm-B>0} is given in Section \ref{sec-quenched-random-cluster}, and closely follows Bencs, Borb\'enyi and Csikv\'ary \cite{BenBorCsi23}. 
%There, we will also gives some alternative expressions, that may be helpful to study random cluster measures on locally tree-like random graphs.
\medskip

We next use Theorem \ref{thm-B>0} to investigate the pressure per particle of the random cluster model. We can trivially bound
        \eqn{
        \kL(G) \leq \frac{|V(G)|}{k} + \sum_{i=2}^{k-1} \kL_i(G),
        }
where 
        \eqn{
        \kL_i(G) = \max\{l\colon  G~\text{\rm contains $l$ vertex-disjoint cycles of length $i$}\}.
        }
In particular, for graphs $G_n=(V(G_n),E(G_n))$ on $|V(G_n)|=n$ vertices, having a locally tree-like limit, we can bound
	\eqn{
	\frac{1}{n}\kL(G_n) \leq \frac{1}{k} + \frac{1}{n}\sum_{i=2}^{k-1} \kL_i(G_n)\convp \frac{1}{k},
	}
since $\kL_i(G_n)/n\convp 0$ by the fact that the local limit is tree-like (recall Condition \ref{cond-locally-tree-like}). Since $k$ is arbitrary, it follows that 
	\eqn{
	\frac{1}{n}\log Z_{G_n}(q,w,B) =\frac{1}{n}\log Z_{G_n}^{\sss (2)}(q,w,B) +o(1).
	}
Thus, the pressure per particle for the random cluster model is the same as that for the rank-2 approximation, as will be a guiding principle for this paper:

\begin{corollary}[Rank-2 approximation of random cluster partition function with external field]
\label{cor-B>0}
		Let $G_n=(V(G_n),E(G_n))$ be a finite graph of size $|V(G_n)|=n$. Then if $q\geq 2$ and $B \geq 0$,
		\eqn{
		\lim_{n\rightarrow \infty} \frac{1}{n} \log Z_{G_n}(q,w,B)
		=\lim_{n\rightarrow \infty} \frac{1}{n} \log Z^{\sss(2)}_{G_n}(q,w,B).
		}
\end{corollary}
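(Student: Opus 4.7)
The plan is to promote the informal sketch given just above the corollary into a clean proof, by combining the two-sided bound of Theorem \ref{thm-B>0} with the vanishing of $\kL(G_n)/n$ on locally tree-like sequences.

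First I would take logarithms in the sandwich
\[
Z^{\sss (2)}_{G_n}(q,w,B) \leq Z_{G_n}(q,w,B) \leq q^{\kL(G_n)} Z^{\sss (2)}_{G_n}(q,w,B),
\]
divide by $n$, and obtain
\[
0 \leq \frac{1}{n}\log Z_{G_n}(q,w,B) - \frac{1}{n}\log Z^{\sss (2)}_{G_n}(q,w,B) \leq \frac{\kL(G_n)}{n}\log q.
\]
This reduces the claim to showing $\kL(G_n)/n \convp 0$.

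Next, I would invoke the deterministic combinatorial bound already recorded in the excerpt, $\kL(G_n) \leq n/k + \sum_{i=2}^{k-1} \kL_i(G_n)$, valid for every integer $k\geq 2$, which reduces matters further to proving $\kL_i(G_n)/n \convp 0$ for each fixed $i\geq 2$. For this I would bound $\kL_i(G_n)$, the maximum number of vertex-disjoint $i$-cycles, by the total number $N_i(G_n)$ of vertices of $G_n$ that lie on some $i$-cycle. By vertex exchangeability of the uniform root, $\expec[N_i(G_n)]/n$ equals the probability that a uniformly chosen vertex of $G_n$ lies on an $i$-cycle, and this event depends only on the radius-$i$ neighbourhood of the root. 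Local convergence in probability of $G_n$ to the tree $(G,\vertex)$ (Condition \ref{cond-locally-tree-like}) then forces this probability to tend to zero, since no vertex in a tree lies on a cycle; Markov's inequality delivers $\kL_i(G_n)/n \convp 0$.

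Finally, the two reductions combine to give $\limsup_{n\to\infty} \kL(G_n)/n \leq 1/k$ in probability for every $k\geq 2$, and letting $k\to\infty$ yields the desired $\kL(G_n)/n \convp 0$. Plugging back into the sandwich, the upper and lower bounds on $\frac{1}{n}\log Z_{G_n}(q,w,B)$ coincide in the limit, proving the corollary. The only mildly subtle step is the passage from ``the local limit is a tree'' to the global statement ``vertex-disjoint short-cycle packings are sublinear'', but this is standard for uniformly sparse locally tree-like graphs via the first-moment/exchangeability argument above; everything else is algebraic bookkeeping on the bounds of Theorem \ref{thm-B>0}.
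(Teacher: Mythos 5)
Your proposal is correct and follows essentially the same route as the paper: sandwich via Theorem \ref{thm-B>0}, the bound $\kL(G_n)\leq n/k+\sum_{i=2}^{k-1}\kL_i(G_n)$, and $\kL_i(G_n)/n\convp 0$ from the tree-like local limit, with $k\to\infty$. The paper simply asserts the last convergence, whereas you justify it by bounding $\kL_i$ by the number of vertices lying on an $i$-cycle and applying local convergence of that bounded local functional; this is a valid (and welcome) filling-in of the same step, not a different argument.
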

We continue by investigating the rank-2 approximation $Z^{\sss(2)}_{G_n}(q,w,B)$ in more detail.
\medskip

\paragraph{\bf Rewrite of rank-2 approximation in terms of two-spin models.} We next write the expressions in \eqref{Z-2-form} in terms of two-spin models, which will turn out to be general Ising models with vertex-dependent external fields. 
Define
	\eqn{
	\label{ZG-psi-def}
	Z_G(\psi,\overline{\psi})=\sum_{\bsigma\in \{+,-\}^{V(G)}}\prod_{v\in V(G)} \overline{\psi}(\sigma_v)\prod_{\{u,v\}\in E(G)} \psi(\sigma_u,\sigma_v),
	}
where we use a slight abuse of notation and write $+$ instead of $+1$ and $-$ instead of $ -1$. Sly and Sun \cite{SlySun14} show that any two-spin model on regular graphs can be mapped to an Ising model, which can be ferro- or antiferromagnetic. Our second main result rewrites the partition function of the rank-2 approximation of the random cluster model with inverse temperature $\beta$ and external field $B$ in terms of the partition function of a ferromagnetic two-spin model:

\begin{theorem}[Rewrite in terms of two-spin models]
\label{thm-gen-Ising-Z(2)}
Let $G=(V(G),E(G))$ be a finite graph. For every $q,w,B$, 
		\eqn{
			Z^{\sss (2)}_G(q,w,B) = Z_G(\psi,\overline{\psi}),
		}
		where
		\eqn{
			\label{psi-def}
			\psi(+,+)=1+w,
			\quad
			\psi(+,-)=\psi(-,+)=1,
			\quad
			\psi(-,-)=1+w/(q-1),
		}
		and 
		\eqn{
			\label{overline-psi-def}
			\overline{\psi}(+)=1, \qquad \overline{\psi}(-)=(q-1)\e^{-B}.
		}
\end{theorem}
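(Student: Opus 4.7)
The plan is to set up the obvious bijection between subsets $S\subseteq V(G)$ and spin configurations $\bsigma\in\{+,-\}^{V(G)}$, and then check that the summands in the two expressions coincide term by term. Concretely, I would associate to each $S\subseteq V(G)$ the configuration
\[
\sigma_v=+ \text{ if } v\in S, \qquad \sigma_v=- \text{ if } v\in V(G)\setminus S,
\]
which is clearly a bijection between $2^{V(G)}$ and $\{+,-\}^{V(G)}$.

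Under this identification, I would partition the edge set $E(G)$ into three classes according to the spin pair at its endpoints: the edges inside $S$ (both endpoints $+$), the edges inside $V(G)\setminus S$ (both endpoints $-$), and the ``mixed'' edges with exactly one endpoint in $S$. Using the definitions in \eqref{psi-def}, each $++$ edge contributes the factor $\psi(+,+)=1+w$, each $--$ edge contributes $\psi(-,-)=1+w/(q-1)$, and each mixed edge contributes $\psi(+,-)=\psi(-,+)=1$. Multiplying over all edges gives exactly
\[
\prod_{\{u,v\}\in E(G)} \psi(\sigma_u,\sigma_v) = (1+w)^{|E(S)|}\Bigl(1+\tfrac{w}{q-1}\Bigr)^{|E(V(G)\setminus S)|},
\]
since the mixed edges contribute $1$ and therefore do not appear, matching the fact that only $|E(S)|$ and $|E(V(G)\setminus S)|$ appear in \eqref{Z-2-form}. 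The vertex weight product is equally transparent: from \eqref{overline-psi-def},
\[
\prod_{v\in V(G)} \overline{\psi}(\sigma_v) = 1^{|S|}\cdot\bigl((q-1)\e^{-B}\bigr)^{|V(G)\setminus S|} = \bigl((q-1)\e^{-B}\bigr)^{|V(G)|-|S|}.
\]

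Summing over $\bsigma$ in \eqref{ZG-psi-def} is the same as summing over $S\subseteq V(G)$, and the resulting summand is exactly the one in \eqref{Z-2-form}. Hence $Z_G(\psi,\overline{\psi})=Z^{\sss(2)}_G(q,w,B)$, as claimed. There is no serious obstacle here; the only thing to be careful about is to verify that the mixed edges indeed contribute trivially, which is precisely the reason for the choice $\psi(+,-)=\psi(-,+)=1$ in \eqref{psi-def}, and that the formula \eqref{Z-2-form} has no explicit factor for mixed edges.
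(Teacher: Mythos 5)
Your proof is correct and is precisely the intended argument: the paper treats this identity as immediate from the definitions (no separate proof is given), and the bijection $S\leftrightarrow\{v\colon\sigma_v=+\}$ with the term-by-term matching of edge and vertex weights is exactly the computation behind it.
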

With Theorem \ref{thm-gen-Ising-Z(2)} in hand, we now set to reformulating this result in terms of ferromagnetic Ising models. 
\medskip

\paragraph{\bf Relation to general Ising models.} We next turn to Sly and Sun \cite{SlySun14}, who investigate general two-spin models, and relate them to Ising models. 
%In their analysis, both ferromagnetic as well as anti-ferromagnetic situations are investigated, and this only depends on the precise values of 
%$\psi,\overline{\psi}$.
% Sly and Sun \cite{SlySun14} focus on $d$-regular graphs (and thus also Bencs et al.\ \cite{BenBorCsi23} do), for which it suffices to consider $\overline{\psi}(+1)=\overline{\psi}(-1)=1$. In our non-regular case, this is unfortunately not the case, so we will have to revisit their analysis. 
Note that 
	\eqn{
	\label{ZG-psi-def-rew}
	Z_G(\psi,\overline{\psi})=(\overline{\psi}(+)\overline{\psi}(-))^{n/2} \sum_{\bsigma\in \{-1,1\}^{V(G)}}\prod_{v\in V(G)} \e^{h\sigma_v} \prod_{\{u,v\}\in E(G)} \psi(\sigma_u,\sigma_v),
	}
where
	\eqn{
	\e^{2h} = \frac{\overline{\psi}(+)}{\overline{\psi}(-)},
	}
so that
	\eqn{
	\label{h-choice}
	h=\frac{1}{2} \log (\overline{\psi}(+)/\overline{\psi}(-))=\frac{1}{2} \log (\e^B/(q-1)).
	}
Then, since $\psi>0$,  we can rewrite (see \cite[page 2393]{SlySun14})
	\eqn{
	\label{psi-two-def}
	\psi(\sigma,\sigma')=\e^{B_0} \e^{\beta^* \sigma\sigma'}\e^{k(\sigma+\sigma')},
	}
where the parameters $B_0$, $k$ and $\beta^*$ are determined by
	\eqan{
	\frac{\psi(+,+)}{\psi(-,-)}&=\e^{4k},
	\label{B-restriction}\\
	\frac{\psi(+,+)\psi(-,-)}{\psi(+,-)^2}&=\e^{4\beta^*},
	\label{beta-restriction}\\
	\psi(+,+)\psi(+,-)^2\psi(-,-)&=\e^{4B_0}.
	\label{B0-restriction}
	}
Thus,
	\eqn{
	\label{k-choice}
	k=\frac{1}{4}\log \left(\frac{1+w}{1+w/(q-1)}\right),
	}
and, by \eqref{psi-def} and \eqref{beta-restriction},
	\eqn{
	\label{beta-choice-a}
	(1+w/(q-1))(1+w)=\e^{4\beta^*},
	}
so that
	\eqn{
	\label{beta-choice}
	\beta^*= \frac{1}{4} \log ((1+w)(1+w/(q-1))),
	}
while, by \eqref{psi-def} and \eqref{B0-restriction},
	\eqn{
	(1+w/(q-1))(1+w)=\e^{4B_0},
	}
so that
	\eqn{ 
	\label{B0-choice}
	B_0= \frac{1}{4} \log ((1+w)(1+w/(q-1)))=\beta^*.
	}
\medskip

 We conclude that
	\eqn{
	\label{ZG-eising}
	Z_G(\psi,\overline{\psi})=(\e^{-B}(q-1))^{n/2} \e^{\beta^*|E(G)|}Z_{G}^{\sss \rm eIsing}(\beta^*,k,h),
	}
where we define the partition function of the \textit{extended Ising model} as 
	\eqn{ \label{def-eising}
	Z_{G}^{\sss \rm eIsing}(\beta^*,k,h):=\sum_{\bsigma\in \{-1,1\}^{V(G)}}\exp \left( \beta^* \sum_{\{u,v\}\in E(G)} \sigma_u \sigma_v + \sum_{v \in V(G)} (k d_v+h) \sigma_v \right),
	}
and $d_v=d_v(G)$ is the degree of vertex $v\in V(G)$. This Ising model is unusual, due to the vertex-dependent external fields, which are governed by the {\em degrees} of the vertices in the graph. We defer a discussion of the parameters to below Corollary \ref{thm-gen-Ising}.
\medskip

The above computations lead to the following two corollaries:

\begin{corollary}[Rewrite in terms of extended Ising models]
\label{thm-gen-Ising}
Let $G=(V(G),E(G))$ be a finite graphs with $|V(G)|=n$ vertices. For every $q,w,B$, 
	\eqan{
	\label{Z2-sum-eIsing}
	Z^{\sss (2)}_G(q,w,B)= (\e^{-B}(q-1))^{n/2} \e^{\beta^*|E(G)|}Z_{G}^{\sss \rm eIsing}(\beta^*,k,h),
	}
where $k$ is given in \eqref{k-choice}, $\beta^*$ in \eqref{beta-choice}, and $h$ in \eqref{h-choice}.
\end{corollary}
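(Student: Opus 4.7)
The plan is to derive Corollary \ref{thm-gen-Ising} by chaining Theorem \ref{thm-gen-Ising-Z(2)} with the explicit rewriting of a general ferromagnetic two-spin model as an Ising model with vertex-dependent external fields, as done by Sly and Sun. First, Theorem \ref{thm-gen-Ising-Z(2)} identifies $Z^{\sss (2)}_G(q,w,B)$ with $Z_G(\psi,\overline{\psi})$ for the $\psi,\overline{\psi}$ prescribed in \eqref{psi-def}--\eqref{overline-psi-def}. The task is therefore to rewrite $Z_G(\psi,\overline{\psi})$ in the form on the right-hand side of \eqref{Z2-sum-eIsing}.

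Next, I would symmetrise the vertex weight. Since $\overline{\psi}(+)\overline{\psi}(-)=\e^{-B}(q-1)$ is strictly positive, writing $\overline{\psi}(\sigma_v)=\sqrt{\overline{\psi}(+)\overline{\psi}(-)}\,\e^{h\sigma_v}$ with $h$ satisfying $\e^{2h}=\overline{\psi}(+)/\overline{\psi}(-)$ yields exactly the factorisation \eqref{ZG-psi-def-rew} with $h$ as in \eqref{h-choice}. For the edge weight, the key observation is that because $\psi>0$, one can uniquely write $\psi(\sigma,\sigma')=\e^{B_0}\e^{\beta^{*}\sigma\sigma'}\e^{k(\sigma+\sigma')}$, and the three unknowns $B_0,k,\beta^*$ are determined by the three multiplicative relations \eqref{B-restriction}--\eqref{B0-restriction}; solving them using \eqref{psi-def} gives the explicit formulas \eqref{k-choice}, \eqref{beta-choice}, \eqref{B0-choice}. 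Crucially, $B_0=\beta^*$ by direct computation.

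Plugging this factorisation into $Z_G(\psi,\overline{\psi})$ yields
\begin{equation*}
Z_G(\psi,\overline{\psi})=(\e^{-B}(q-1))^{n/2}\e^{B_0|E(G)|}\sum_{\bsigma}\exp\!\Big(\beta^{*}\!\!\sum_{\{u,v\}\in E(G)}\!\!\sigma_u\sigma_v+k\!\!\sum_{\{u,v\}\in E(G)}\!\!(\sigma_u+\sigma_v)+h\sum_{v\in V(G)}\sigma_v\Big).
\end{equation*}
The step that does the real work is swapping the edge-sum into a vertex-sum: each vertex $v$ appears in the sum $\sum_{\{u,v\}}(\sigma_u+\sigma_v)$ exactly $d_v$ times, so
\begin{equation*}
k\sum_{\{u,v\}\in E(G)}(\sigma_u+\sigma_v)=k\sum_{v\in V(G)}d_v\sigma_v,
\end{equation*}
producing precisely the vertex-dependent field $(kd_v+h)\sigma_v$ appearing in \eqref{def-eising}. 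Combined with $B_0=\beta^*$, this gives \eqref{Z2-sum-eIsing}.

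There is no real obstacle here; the proof is a bookkeeping exercise once Theorem \ref{thm-gen-Ising-Z(2)} is in hand and the Sly--Sun parameterisation is recognised. The only point that warrants care is the constant prefactor: one must track that the $|E(G)|$ factors of $\e^{B_0}$ and the $n/2$ factors of $\sqrt{\overline{\psi}(+)\overline{\psi}(-)}$ combine exactly to $(\e^{-B}(q-1))^{n/2}\e^{\beta^{*}|E(G)|}$, which uses the identity $B_0=\beta^*$ that is perhaps the only non-obvious algebraic coincidence.
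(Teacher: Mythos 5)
Your proposal is correct and follows essentially the same route as the paper, which derives the corollary from Theorem \ref{thm-gen-Ising-Z(2)} via the symmetrisation \eqref{ZG-psi-def-rew}, the Sly--Sun factorisation \eqref{psi-two-def} with parameters solved from \eqref{B-restriction}--\eqref{B0-restriction} (including the coincidence $B_0=\beta^*$), and the conversion of the edge-sum $k\sum_{\{u,v\}}(\sigma_u+\sigma_v)$ into the degree-weighted vertex field $k d_v$. All the algebra you record matches \eqref{h-choice}, \eqref{k-choice}, \eqref{beta-choice} and \eqref{ZG-eising}, so nothing is missing.
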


\begin{corollary}[Thermodynamic limit of random cluster model]
\label{cor-termodynamic-limit-|R|C}
Let $G_n=(V(G_n),E(G_n))$ be a random  graph with $|V(G_n)|=n$ vertices that is locally tree-like. Then, if $q\geq 2$ and  $B \geq 0$,
	\eqn{
		\label{thermodynamic-limit-|R|C-q>2}
		\lim_{n\rightarrow \infty} \frac{1}{n} \log Z_{G_n}(q,w,B)=\frac{\beta^*}{2}\expec[D_o]+\frac{1}{2} \log(\e^{-B}(q-1))
		+\lim_{n\rightarrow \infty} \frac{1}{n} \log Z_{G_n}^{\sss \rm eIsing}(\beta^*,k,h),
	}
where $k$ is given in \eqref{k-choice}, $\beta^*$ in \eqref{beta-choice}, and $h$  in \eqref{h-choice}, and provided the limit on the rhs exists.
\end{corollary}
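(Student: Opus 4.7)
The plan is to chain Corollary \ref{cor-B>0} with Corollary \ref{thm-gen-Ising} and identify the asymptotics of the edge density using Condition \ref{cond-uniform-sparse}. By Corollary \ref{cor-B>0}, the cycle correction from Theorem \ref{thm-B>0} is already $o(n)$ in the log-partition function under the locally tree-like hypothesis, so
\begin{equation*}
\lim_{n\to\infty} \frac{1}{n}\log Z_{G_n}(q,w,B) = \lim_{n\to\infty} \frac{1}{n}\log Z_{G_n}^{\sss(2)}(q,w,B).
\end{equation*}
Next, Corollary \ref{thm-gen-Ising} rewrites the rank-$2$ partition function exactly, and taking logs and dividing by $n$ gives the identity
\begin{equation*}
\frac{1}{n}\log Z_{G_n}^{\sss(2)}(q,w,B) = \frac{1}{2}\log(\e^{-B}(q-1)) + \beta^*\,\frac{|E(G_n)|}{n} + \frac{1}{n}\log Z_{G_n}^{\sss \rm eIsing}(\beta^*,k,h).
\end{equation*}

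Thus the only remaining ingredient is the limit $|E(G_n)|/n \to \frac{1}{2}\expec[D_o]$. Since $\sum_{v\in V(G_n)} d_v = 2|E(G_n)|$, one has $|E(G_n)|/n = \frac{1}{2}\expec[D_n \mid G_n]$, where $D_n$ is the degree of a uniformly chosen vertex in $G_n$; Condition \ref{cond-uniform-sparse} gives $\expec[D_n] \to \expec[D_o]$, and local convergence in probability (Condition \ref{cond-locally-tree-like}) upgrades this to convergence of $|E(G_n)|/n$ to $\frac{1}{2}\expec[D_o]$ in the mode of convergence consistent with the hypothesis on the extended Ising pressure (in expectation, in probability, or almost surely, as appropriate).

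Substituting this edge-density asymptotic into the two displays above and letting $n \to \infty$ produces the claimed formula, provided the limit of $\frac{1}{n}\log Z_{G_n}^{\sss \rm eIsing}(\beta^*,k,h)$ exists. The argument is essentially algebraic: all the structural content — the rank-$2$ reduction that discards the cycle correction and the rewrite as an Ising-type model with vertex-dependent external fields — is already carried out in Theorems \ref{thm-B>0} and \ref{thm-gen-Ising-Z(2)} and their corollaries. Consequently there is no genuine obstacle; the only mild technicality is matching the mode of convergence used for $|E(G_n)|/n$ to that in which the extended Ising pressure is assumed to converge, which is standard under the locally tree-like setup.
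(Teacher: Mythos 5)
Your proposal is correct and follows exactly the route the paper intends: chain Corollary \ref{cor-B>0} (the rank-2 reduction with the $o(n)$ cycle correction) with the exact identity of Corollary \ref{thm-gen-Ising}, then use $|E(G_n)|/n\to\tfrac{1}{2}\expec[D_o]$ from Condition \ref{cond-uniform-sparse}, which is precisely the one-line justification the paper itself gives for the analogous Corollary \ref{cor-pressure}. No issues.
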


Let us make some observations at this point:

\begin{remark}[Discussion of the parameters]
\label{rem-parameters-eIsing}
{\rm We next discuss the nature of the extended Ising model:
\begin{description}
\item[Sign of $\beta^*$] Since $w=\e^{\beta}-1$, and $\beta>0$, we have that $\beta^*>0$ as well. Thus, our extended Ising model is {\em ferromagnetic;}
\medskip

\item[Prefactors and $B_0$] The factor  multiplying $Z_{G}^{\sss \rm eIsing}(\beta^*,k,h)$ is a constant, and will thus not play a significant role in the properties of our model;
\medskip

\item[Sign of $k$] We have that $k$ in \eqref{k-choice} satisfies that $k>0$ for $q>2$;

\medskip

\item[Sign external fields] The difficulty is that $k$ in \eqref{k-choice}, and $h$ in \eqref{h-choice} might have opposite signs. Thus, the vertex-dependent external field $B_v=k d_v+h$ for $v\in V(G)$ potentially alternates in sign, depending on the degrees. The sign {\em is} fixed when $d_v=d$  for all $v\in V(G)$, i.e., for $d$-regular graphs. The sign is also fixed for $q>2$ when $h\geq 0$, i.e., $B\geq \log (q-1)$, in which case we have a high external field. In all other cases, the sign of the vertex-dependent external field is negative for vertices of small degree, and positive for vertices of large degree. For such mixed ferromagnetic/anti-ferromagnetic models, few results are known;
\medskip

\item[Extended Ising model] The Ising model with vertex-dependent external fields of the form $k d_v+h$ for $v\in V(G)$ has not been investigated in the literature. However, when $k d_v+h\geq B_{\min}>0$, i.e., all vertices have an external field that is bounded below by a positive constant, the methods in \cite{DemMon10b,DemMon10a,DemMonSlySun14,DomGiaHof10} can be used to compute the thermodynamic limit of the partition function. We refer to \cite{Hofs25} for an extensive overview of Ising models on random graphs. \hfill \ensymboldefinition
\end{description}}
\end{remark}

\subsection{Bethe functional on random graphs}
\label{sec-Bethe}
We next use the above representation, combined with a result of Ruozzi \cite{Ruoz12}, to show that the partition function of the random cluster model is bounded below by the Bethe partition function. See also \cite{YedFreWei05} for more details on the Bethe partition function, and \cite{WilSudWai07}, where the authors first conjectured the bound that we are about to discuss. Further, see \cite{MezMon09} for background on belief propagation, and the introduction to \cite{CojGalGolRavSteVig23} for a concise explanation.
\medskip

Before being able to state this bound, let us introduce some notation. Recall \eqref{ZG-psi-def}, and, in terms of this, define $Z_{\rm B}(G,\mu)$ by
	\eqan{
	\label{Bethe-partition-function-mu}
	\log Z_B(G, \mu)&=\sum_{v\in V(G)} \sum_{\sigma\in \{+,-\}} \mu_v(\sigma) \log \overline{\psi}(\sigma)
	+\sum_{\{u,v\}\in E(G)} \sum_{\sigma,\sigma'}\mu_{\{u,v\}}(\sigma,\sigma')\log\psi(\sigma,\sigma')\nn\\
	&\quad-\sum_{v\in V(G)} \sum_{\sigma\in \{+,-\}} \mu_v(\sigma) \log \mu_v(\sigma)
	-\sum_{\{u,v\}\in E(G)} \sum_{\sigma,\sigma'}\mu_{\{u,v\}}(\sigma,\sigma')\log\Big(\frac{\mu_{\{u,v\}}(\sigma,\sigma')}{\mu_u(\sigma)\mu_v(\sigma')}\Big),
	}
where 
	\eqn{
	\mu=\Big((\mu_v(\sigma))_{v\in V(G), \sigma\in \{+,-\}}, (\mu_{e}(\sigma,\sigma'))_{e\in E(G), \sigma, \sigma'\in \{+,-\}}\Big)
	}
is a probability distribution with consistent marginals, i.e., $\mu$ satisfies that $\mu\geq 0$ and $\sum_{\sigma\in \{+,-\}} \mu_v(\sigma)=1$, while, for every $\{u,v\}\in E(G)$ and $\sigma'\in \{+,-\}$,
	\eqn{
	\sum_{\sigma\in \{+,-\}} \mu_{\{u,v\}}(\sigma,\sigma')=\mu_v(\sigma').
	}
In terms of this notation, let
	\eqn{
	\label{Bethe-partition-function}
	Z_B(G)=\max_{\mu} Z_B(G, \mu).
	}

\invisibleRvdH{This can be seen by using Lagrange multipliers on the constraints, as follows. We will differentiate wrt $\mu_v(\sigma)$ and $\mu_{\{u,v\}}(\sigma,\sigma')$, and, to identify the Lagrange multipliers, we write the constraints as
	\eqn{
	\label{Lagrange-constraints}
	\sum_{\sigma} \mu_v(\sigma)-1=0,
	\qquad
	\sum_{\sigma_u \colon \{u,v\}\in E(G)} \mu_{\{u,v\}}(\sigma_u,\sigma')-\mu_v(\sigma')=0.
	}
Then, differentiation wrt $\mu_v(\sigma)$ gives
	\eqn{
	\log \overline{\psi}(\sigma)+\sum_{u\colon \{u,v\}\in E(G)} \sum_{\sigma,\sigma'}\frac{\mu_{\{u,v\}}(\sigma,\sigma')}{\mu_v(\sigma')}=\lambda_v,
	}
while differentiation wrt $\mu_{\{u,v\}}(\sigma,\sigma')$ gives
	\eqn{
	\log \overline{\psi}(\sigma)+\sum_{u\colon \{u,v\}\in E(G)} \sum_{\sigma,\sigma'}\frac{\mu_{\{u,v\}}(\sigma,\sigma')}{\mu_v(\sigma')}=\lambda_v,
	}}

The corollary below relates $Z_G(\psi,\overline{\psi})$ to the Bethe partition function $Z_B(G)$:

\begin{corollary}[Lower bound in terms of Bethe partition function]
\label{cor-Bethe}
For every graph $G$ and $Z_G(\psi,\overline{\psi})$ as in \eqref{ZG-psi-def}, when $\frac{\psi(+,+)\psi(-,-)}{\psi(+,-)^2}\geq 1$,
	\eqn{
	Z_G(\psi,\overline{\psi})\geq Z_B(G).
	}
As a result, for every $q\geq 2$ and $B\geq 0$, also 
	\eqn{
	Z_{G}(q,w,B) \geq Z_B(G).
	}
\end{corollary}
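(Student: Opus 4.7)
The plan is to reduce both inequalities to the theorem of Ruozzi \cite{Ruoz12}, which asserts that for a pairwise graphical model on a finite graph with binary spins, if the pairwise compatibility function $\psi$ is \emph{log-supermodular}, meaning $\psi(+,+)\psi(-,-)\geq \psi(+,-)\psi(-,+)$, then the Bethe partition function is a lower bound on the true partition function, regardless of the choice of the single-site weights $\overline{\psi}$. Since $\psi(+,-)=\psi(-,+)$ by construction in \eqref{ZG-psi-def}, the assumption $\psi(+,+)\psi(-,-)/\psi(+,-)^2\geq 1$ is exactly the log-supermodularity condition, and the first inequality $Z_G(\psi,\overline{\psi})\geq Z_B(G)$ follows at once.

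For the second inequality I would chain three ingredients. First, Theorem \ref{thm-B>0} provides the lower bound $Z_G(q,w,B)\geq Z^{\sss (2)}_G(q,w,B)$. Second, Theorem \ref{thm-gen-Ising-Z(2)} identifies $Z^{\sss (2)}_G(q,w,B)=Z_G(\psi,\overline{\psi})$ with the specific choice of $\psi,\overline{\psi}$ in \eqref{psi-def}--\eqref{overline-psi-def}. Third, I verify the log-supermodularity hypothesis for this $\psi$: from \eqref{psi-def},
\[
\frac{\psi(+,+)\psi(-,-)}{\psi(+,-)^2}=(1+w)\Big(1+\frac{w}{q-1}\Big)=\e^{4\beta^*}\geq 1,
\]
where the middle equality is \eqref{beta-choice-a} and the inequality follows because $w=\e^\beta-1\geq 0$ and $q\geq 2$ force $\beta^*\geq 0$. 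Applying the first half of the corollary to this $\psi,\overline{\psi}$ now gives $Z_G(\psi,\overline{\psi})\geq Z_B(G)$, and concatenating the three inequalities yields the desired bound $Z_G(q,w,B)\geq Z_B(G)$.

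The substantive content—that the Bethe functional is a genuine lower bound on the partition function of a ferromagnetic two-spin model—is entirely carried by Ruozzi's theorem, so there is no real analytic obstacle to overcome. The only step requiring care is ensuring that the reformulation produced by Theorem \ref{thm-gen-Ising-Z(2)} actually yields a log-supermodular (equivalently, ferromagnetic) pairwise interaction, which as shown above is automatic for $q\geq 2$ and $B\geq 0$; it is precisely this sign of $\beta^*$ that makes the rewriting of the random cluster model as a two-spin ferromagnet so useful, and without it the Ruozzi bound would not apply.
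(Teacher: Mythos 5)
Your proposal is correct and follows essentially the same route as the paper: both reduce the first inequality to Ruozzi's theorem by observing that the hypothesis $\psi(+,+)\psi(-,-)\geq\psi(+,-)^2$ is exactly log-supermodularity of the product-form weight $f(\bsigma)=\prod_v\overline{\psi}(\sigma_v)\prod_{\{u,v\}}\psi(\sigma_u,\sigma_v)$ (the single-site factors being automatically log-modular), and both obtain the second inequality by chaining the rank-2 lower bound of Theorem \ref{thm-B>0} with the identification $Z^{\sss(2)}_G(q,w,B)=Z_G(\psi,\overline{\psi})$ of Theorem \ref{thm-gen-Ising-Z(2)} and the verification $(1+w)(1+w/(q-1))=\e^{4\beta^*}\geq 1$ for $w\geq 0$, $q\geq 2$. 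No gaps.
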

\smallskip

The above equations in \eqref{Bethe-partition-function-mu} and \eqref{Bethe-partition-function} are closely related to {\em belief propagation}, i.e., the solutions of the belief propagation algorithm are solutions to them as well.  For example, we can differentiate $Z_B(G, \mu)$ in \eqref{Bethe-partition-function} wrt the variables $\mu_v(\sigma)$ and $\mu_{\{u,v\}}(\sigma,\sigma')$, and use Lagrange multipliers on the constraints. This will give recursion relations for the variables $\mu_v(\sigma)$ and $\mu_{\{u,v\}}(\sigma,\sigma')$ that are called belief propagation. Such belief propagation equations are {\em exact} on a tree, and one can hope that they are {\em almost} exact on graphs that are close to trees. Corollary \ref{cor-Bethe} shows that the belief propagation solutions always yield {\em lower bounds} on the partition function. See Section \ref{sec-termodynamic-limit-extended-Ising} for belief propagation on trees, where the recursions are {\em exact}.
\invisibleRvdH{\smallskip

Fix a rooted graph $(G,\vertex)$ with root $\vertex$. Let $\beta\geq 0$ and $B>0$. Let $\{x,y\}$ be an edge in $G$. Then, let $G_{x\rightarrow y}$ be the sub-graph of $G$ rooted at $x$ which results from deleting the edge $\{x,y\}$ from $G$. We write $(x,y)$ for the directed version of the edge $\{x,y\}$ that points from $x$ to $y$. 
\smallskip

Let $\sigma_x\mapsto \mu_{ x\rightarrow y}^{\sss\beta,B}(\sigma_x)$ be the marginal law of $\sigma_x$ for the Ising model on $G_{x\rightarrow y}$ with inverse temperature $\beta$ and vertex-dependent external fields $(B_v)_{v\in V(G)}$. We will mostly be interested in $(x,y)=(\vertex,j)$ or $(x,y)=(j,\vertex)$ for some neighbour $j$ of $\vertex$.
\smallskip

Several relations hold between the $\mu_{ x\rightarrow y}^{\sss\beta,B}(\sigma_x)$. The first is that the law $\mu_{v}(\sigma)$ of the spin at $v\in V(G)$ can be obtained from $\mu_{j\rightarrow v}(\sigma_j)$ for all $j\in \partial v$, where $\partial v$ are the neighbours of $v\in V(G)$, as
	\eqn{
	\label{root-spin-graph-distribution}
	\mu_{v}(\sigma)=\frac{\e^{B_v \sigma} 
	\prod_{j\in \partial v} \Big(\sum_{\sigma_j} \e^{\beta \sigma \sigma_j} \mu_{ j\rightarrow v}(\sigma_j)\Big)}{\sum_\sigma \e^{B_v\sigma} 
	\prod_{j\in \partial v} \Big(\sum_{\sigma_j} \e^{\beta \sigma \sigma_j} \mu_{ j\rightarrow v}(\sigma_j)\Big)}.
	}
Further,
	\eqn{
	\label{graph-recursion-massage-passing}
	\mu_{v\rightarrow u}(\sigma)
	=\e^{B_v\sigma}\prod_{j'\in \partial v\colon j'\neq u} \Big(\sum_{\sigma_{j'}} \e^{\beta \sigma \sigma_{j'}} \mu_{j'\rightarrow v}(\sigma_j)\Big),
	}
and, in a similar way,
	\eqn{
	\mu_{\{u,v\}}(\sigma, \sigma')=\mu_{v}(\sigma) \e^{\beta \sigma \sigma'} \mu_{v\rightarrow u}(\sigma).
	}
Equations \eqref{root-spin-tree-distribution} and \eqref{tree-recursion-massage-passing} can be thought of as {\em messages} or {\em beliefs} that are being passed between vertices and their neighbours. This explains the terminology of {\em message passing}, or {\em belief propagation}. The solutions to this set of recurrence relations are optimisers of \eqref{Bethe-partition-function}.}
\smallskip

Let us give some background on the proof of Corollary \ref{cor-Bethe}. We note that
	\eqn{
	Z_{G}^{\sss(2)} (q,w,B)=
	Z_G(\psi,\overline{\psi})=\sum_{\bsigma \in \{+,-\}^{V(G)}} f(\bsigma),
	}
where
	\eqn{
	f(\bsigma)=\prod_{v\in V(G)}\overline{\psi}(\sigma_v)\prod_{\{u,v\}\in E(G)} \psi(\sigma_u,\sigma_v).
	}
For $\bsigma, \bsigma'$, we let $\bsigma\wedge \bsigma'$ and $\bsigma\vee\bsigma'$ be the coordinate-wise minimum and maximum of $\bsigma$ and $\bsigma'$. Then,
	\eqn{
	f(\bsigma\wedge \bsigma')f(\bsigma\vee\bsigma')\geq f(\bsigma)f(\bsigma'),
	}
i.e., the function $f$ is log-supermodular (cf.\ \cite[Definition 2.1]{Ruoz12}).  Ruozzi shows that the lower bound in  Corollary \ref{cor-Bethe} holds for {\em all} log-supermodular functions that are of product structure as in \eqref{ZG-psi-def}. In fact, $\overline{\psi}(\sigma_v)$ may even depend on the vertex $v$ as $\overline{\psi}_v(\sigma_v)$, and $\psi(\sigma_u,\sigma'_v)$ may depend on the edge $\{u,v\}$ as $\psi_{\{u,v\}}(\sigma_u,\sigma'_v)$.
\smallskip

The way how Ruozzi proves this statement in \cite{Ruoz12} is by using $k$-covers, which compare graphs to graphs that consist of several copies of the vertex set, and are such that each vertex in each of the copies has exactly the correct number of edges to copies of its neighbours. The main point is that such covers are more tree-like than the original graph, which may intuitively explain the relation to the Bethe partition function. We refer to \cite{Ruoz12} and the references in it for more details.
%\medskip
%We next specialise to random regular graphs.

\subsection{Results for random cluster models on locally tree-like regular graphs}
\label{sec-|R|C-d-regular}
We first analyse the consequences for the $d$-regular setting, for which the external field has a fixed sign and is thus vertex independent. 
\medskip

\paragraph{\bf Pressure per particle for general external fields.} Let $G_n$ be locally tree-like $d$-regular graph having $|V(G_n)|=n$ vertices. Let us denote $\varphi^{\rm \sss Ising}(\beta,B)$ as the pressure per particle of the Ising model with inverse temperature $\beta\geq 0$ and external field $B\in \mathbb{R}$, i.e., 
	\eqn{
		\frac{1}{n} \log Z_{G_n}^{\sss \rm Ising}(\beta,B)\convp \varphi^{\rm \sss Ising}(\beta,B).
	}
	This limit exists by \cite{DemMon10a}, see also \cite{DemMon10b,DemMonSlySun14,DomGiaHof10,DomGiaHof12} for related results. Our first result shows that the pressure per particle exists for all $\beta,B$:

\begin{theorem}[Thermodynamic limit of random cluster model on regular graphs]
\label{thm-termodynamic-limit-|R|C}
Let $G_n=(V(G_n),E(G_n))$ be a locally tree-like $d$-regular graph having $|V(G_n)|=n$ vertices.  Then, for all $q \geq 2$ and $w, B\geq 0$, the limit $\varphi(w,B)=	\lim_{n\rightarrow \infty} \tfrac{1}{n} \log Z_G(q,w,B)$ is given by
		\eqn{
			\label{thermodynamic-limit-|R|C-q>2-|R|RG}
			\varphi(w,B)= \frac{\beta^*d}{2}+\frac{1}{2} \log(\e^{-B}(q-1))
			+\varphi^{\rm \sss Ising}(\beta^*,B^*)
		}
		where $w=\e^\beta-1$,
		\eqn{
			\beta^*= \frac{\beta}{4} +\frac{1}{4}\log\Big(1+\frac{\e^\beta-1}{q-1}\Big),
		}
		and
		\eqan{
			B^* &=\frac{d}{4}\log \left(\frac{1+w}{1+w/(q-1)}\right)+\frac{1}{2} \log (\e^B/(q-1)).
		}
	\end{theorem}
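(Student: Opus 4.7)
The plan is to chain together the three main building blocks already established in the excerpt, and then use the $d$-regularity to collapse the vertex-dependent external field of the extended Ising model into a single constant field, so that the existing Ising theory of Dembo--Montanari applies.

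First, I would invoke Corollary \ref{cor-B>0}, which says that the pressure per particle of the random cluster model equals that of the rank-2 approximation up to $o(1)$:
\eqn{
\frac{1}{n}\log Z_{G_n}(q,w,B) = \frac{1}{n}\log Z^{\sss(2)}_{G_n}(q,w,B) + o(1).
}
Next, I would apply Corollary \ref{thm-gen-Ising}, which rewrites the rank-2 partition function as
\eqn{
Z^{\sss(2)}_{G_n}(q,w,B) = (\e^{-B}(q-1))^{n/2}\, \e^{\beta^*|E(G_n)|}\, Z_{G_n}^{\sss\rm eIsing}(\beta^*,k,h),
}
with $\beta^*$ as in \eqref{beta-choice}, $k$ as in \eqref{k-choice}, and $h$ as in \eqref{h-choice}. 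For a $d$-regular graph we have $|E(G_n)|=nd/2$ and $d_v \equiv d$ for every $v\in V(G_n)$, so the extended Ising model reduces to a standard Ising model with a vertex-independent external field
\eqn{
B^* = kd+h = \frac{d}{4}\log\Big(\frac{1+w}{1+w/(q-1)}\Big) + \frac{1}{2}\log(\e^B/(q-1)),
}
matching the expression in the theorem statement. Thus
\eqn{
\frac{1}{n}\log Z_{G_n}^{\sss\rm eIsing}(\beta^*,k,h) = \frac{1}{n}\log Z_{G_n}^{\sss\rm Ising}(\beta^*,B^*).
}

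Finally, because $G_n$ is a locally tree-like $d$-regular graph, the pressure per particle of the ordinary Ising model exists by the Dembo--Montanari result cited just above the theorem statement, and converges to $\varphi^{\sss\rm Ising}(\beta^*,B^*)$. Substituting and using $|E(G_n)|/n \to d/2$ gives
\eqn{
\frac{1}{n}\log Z_{G_n}(q,w,B) \longrightarrow \frac{\beta^*d}{2} + \frac{1}{2}\log(\e^{-B}(q-1)) + \varphi^{\sss\rm Ising}(\beta^*,B^*),
}
which is \eqref{thermodynamic-limit-|R|C-q>2-|R|RG}. A routine check using $w=\e^\beta-1$ rewrites $\beta^*=\tfrac14\log((1+w)(1+w/(q-1)))$ in the form claimed in the statement.

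Since every nontrivial ingredient is quoted from earlier in the paper, there is no real obstacle: the argument is essentially assembly. The one place where care is needed is in verifying that the convergence of $\tfrac1n\log Z_{G_n}^{\sss\rm Ising}(\beta^*,B^*)$ is \emph{in probability} (which is how Corollary \ref{cor-B>0} delivers its estimate), matching the mode of convergence in the theorem statement; this is immediate from \cite{DemMon10a}, which proves almost sure convergence on any locally tree-like $d$-regular graph sequence.
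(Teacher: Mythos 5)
Your proposal is correct and follows essentially the same route as the paper: the authors also prove this theorem by chaining Corollary \ref{cor-B>0} and Corollary \ref{thm-gen-Ising} (via Corollary \ref{cor-termodynamic-limit-|R|C}), observing that $d$-regularity makes the vertex-dependent field $kd_v+h$ collapse to the constant $B^*=kd+h$, and then citing \cite{DemMon10a} for the existence of the Ising pressure per particle. The only cosmetic difference is that the paper compresses all of this into a one-paragraph remark after the theorem statement rather than writing it out.
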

Due to the link to the Ising model (where now the external fields are constant) in Corollary \ref{cor-termodynamic-limit-|R|C}, this result follows directly from the literature, in particular \cite{DemMon10a}. We refer to \cite{DemMon10b,DemMonSlySun14,DomGiaHof10,DomGiaHof12} for related results, and \cite{Hofs25} for an overview of the Ising model on locally tree-like random graphs. The whole point is that these references prove that the pressure per particle for Ising models exists for {\em all} locally tree-like random graphs, and fixed external fields.
\medskip

\paragraph{\bf The nature of the phase transition.}   We next analyse the phase transition in terms of $\beta$. For $q>2$, the phase transition when  $B=0$ follows from the analysis by Bencs, Borb\'enyi and Csikv\'ary in \cite{BenBorCsi23}. We will show that the first-order phase transition {\em persists} for certain positive external fields $B > 0$. 
\medskip

For $q \geq 2$ and $d\geq 3$, define 
	\eqn{ 
	\label{lqd}
	\ell_{q,d}(x) = \frac{x^{2/d}-1}{1-\tfrac{1}{q-1}x^{2/d}},
	}
and
	\eqn{
	\label{critical-wc}
	 w_c(B):=\ell_{q,d}((q-1)\e^{-B}) 
	}
Then the  {\em critical curve} is defined by 
	\eqn{
	\label{crit-curve}
	\mathscr{C}=\{ (w,B): w=w_c(B), \,  0 \leq  B < B_+\},
	}
where $B_+>0$ is   the unique positive   solution of
	\eqn{ 
	\label{b+}
	(1+w_c(B))(1+w_c(B)/(q-1)) = \left( \frac{d}{d-2} \right)^2.
	}
The following theorem describes the first-order phase transition of random cluster model:

\begin{theorem}[First-order phase transition for random-cluster model on regular graph]
\label{thm-|R|C-regular}
If $q>2$, the random cluster model on a sequence of locally tree-like $d$-regular graph undergoes a {\em first-order phase transition} at the critical curve $\mathscr{C}$ parameterised by the equation \eqref{crit-curve}. More precisely, for $ 0 \leq  B < B_+$,
	\eq{
	\partial_w \varphi (w_c(B)^+,B) \neq 	\partial_w \varphi (w_c(B)^-,B).
	}
where $\varphi(w,B)$ is defined in \eqref{thermodynamic-limit-|R|C-q>2-|R|RG} with $w=\e^\beta-1$. For $(w, B) \not \in \mathscr{C}$, instead, there is no phase transition, that is, $w\mapsto \varphi(w,B)$ is analytic.
\end{theorem}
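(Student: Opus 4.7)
The plan is to reduce everything to the known analyticity structure of the Ising model on a locally tree-like $d$-regular graph via the identity
\[
\varphi(w,B) = \frac{\beta^* d}{2} + \frac{1}{2}\log(\e^{-B}(q-1)) + \varphi^{\rm \sss Ising}(\beta^*,B^*)
\]
from Theorem~\ref{thm-termodynamic-limit-|R|C}. The first two terms, and the maps $(w,B)\mapsto \beta^*$ and $(w,B)\mapsto B^*$, are real-analytic on $[0,\infty)\times[0,\infty)$, so every analytic/non-analytic feature of $\varphi$ in $(w,B)$ is the pull-back of the corresponding feature of $\varphi^{\rm \sss Ising}$ in $(\beta^*,B^*)$. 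From \cite{DemMon10a} (see also \cite{Hofs25}), the map $(\beta,B)\mapsto \varphi^{\rm \sss Ising}(\beta,B)$ is real-analytic on $\{(\beta,B):\beta\geq 0,\ B\in\mathbb R\}$ except on the open half-line $\{(\beta,0):\beta>\beta_c^{\rm \sss Ising}(d)\}$, where $\atanh(\beta_c^{\rm \sss Ising}(d))=1/(d-1)$, equivalently $\e^{2\beta_c^{\rm \sss Ising}(d)}=d/(d-2)$, and where the $B$-derivative $\partial_B\varphi^{\rm \sss Ising}$ jumps by $2M^\star(\beta)>0$ (spontaneous magnetisation).

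I would next translate the two conditions $B^*=0$ and $\beta^*>\beta_c^{\rm \sss Ising}(d)$ into conditions on $(w,B)$. Setting $B^*=0$ in the formula of Theorem~\ref{thm-termodynamic-limit-|R|C} and solving for $w$ in terms of $x=(q-1)\e^{-B}$ yields $(1+w)/(1+w/(q-1))=x^{2/d}$, which after algebra is $w=\ell_{q,d}(x)=w_c(B)$, by the definition \eqref{lqd}--\eqref{critical-wc}. Using $4\beta^*=\log((1+w)(1+w/(q-1)))$, the condition $\beta^*>\beta_c^{\rm \sss Ising}(d)$ becomes $(1+w)(1+w/(q-1))>(d/(d-2))^2$; restricting to the curve $w=w_c(B)$, this is strict for $B<B_+$ and fails for $B\geq B_+$ by the definition \eqref{b+} of $B_+$. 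Hence the set of $(w,B)$ with $(\beta^*,B^*)$ lying on the Ising singular half-line is exactly the critical curve $\mathscr{C}$ in \eqref{crit-curve}; for $(w,B)\not\in\mathscr{C}$, the pull-back $\varphi^{\rm \sss Ising}(\beta^*(w,B),B^*(w,B))$ is real-analytic, hence so is $\varphi(w,B)$.

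Finally, I would verify that the Ising jump in $\partial_{B^*}\varphi^{\rm \sss Ising}$ actually propagates to a jump of $\partial_w\varphi$ on $\mathscr{C}$. By the chain rule,
\[
\partial_w \varphi = \tfrac{d}{2}\partial_w \beta^* + \partial_{\beta^*}\varphi^{\rm \sss Ising}\,\partial_w\beta^* + \partial_{B^*}\varphi^{\rm \sss Ising}\,\partial_w B^*,
\]
where the first two summands are continuous across $\mathscr{C}$ (since $\partial_{\beta^*}\varphi^{\rm \sss Ising}$ is continuous at $B^*=0$ for all $\beta^*$). A short computation gives
\[
\partial_w B^* = \frac{d}{4}\cdot \frac{(q-2)/(q-1)}{(1+w)(1+w/(q-1))},
\]
which is strictly positive for $q>2$ and $w\geq 0$. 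Combining this with the jump of $\partial_{B^*}\varphi^{\rm \sss Ising}$ by $2M^\star(\beta^*)>0$ across $B^*=0$ for $\beta^*>\beta_c^{\rm \sss Ising}(d)$ yields $\partial_w\varphi(w_c(B)^+,B)-\partial_w\varphi(w_c(B)^-,B)\neq 0$ for all $0\leq B<B_+$. The main obstacle is not the algebra above but the input from the Ising model theory: namely, the real-analyticity of $\varphi^{\rm \sss Ising}$ off the coexistence line, and the positivity of the spontaneous magnetisation $M^\star(\beta^*)$ throughout $\beta^*>\beta_c^{\rm \sss Ising}(d)$ on locally tree-like $d$-regular graphs; once these are imported from \cite{DemMon10a,DemMonSlySun14,DomGiaHof10,DomGiaHof12}, the rest of the proof is a direct change of variables.
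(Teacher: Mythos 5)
Your reduction to the Ising pressure on the $d$-regular tree is exactly the route the paper takes: identify the critical curve as the preimage of the Ising coexistence line $\{B^*=0,\ \beta^*>\beta_c^{\rm \sss Ising}\}$ under the analytic change of variables $(w,B)\mapsto(\beta^*,B^*)$, and propagate the jump of $\partial_{B^*}\varphi^{\rm \sss Ising}$ through the chain rule using $\partial_w B^*=\frac{d}{4}\cdot\frac{q-2}{(q-1)(1+w)(1+w/(q-1))}>0$ for $q>2$. The paper phrases the Ising input via the variational formula of \cite{Can19} (properties of the maximiser $t_{\beta,z}$) rather than via \cite{DemMon10a}, but that is a cosmetic difference.

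There is, however, one genuine gap: you dispose of the condition $\beta^*(w_c(B))>\beta_c^{\rm \sss Ising}$ ``for $B<B_+$, by the definition \eqref{b+} of $B_+$''. The definition \eqref{b+} only declares $B_+$ to be the unique positive solution of $g(w_c(B))=(d/(d-2))^2$ with $g(w)=(1+w)(1+w/(q-1))$; it does not by itself guarantee that such a solution exists, that it is unique, or that the inequality $g(w_c(B))>(d/(d-2))^2$ holds precisely on $[0,B_+)$. Establishing this is the main technical content of the paper's proof: one must show (i) that $B\mapsto w_c(B)$ is strictly decreasing (the computation \eqref{wbp}, which uses $q>2$), so that $B\mapsto g(w_c(B))$ is strictly decreasing on the interval where $w_c(B)\geq 0$, and (ii) the strict inequality $g(w_c(0))>(d/(d-2))^2$ together with $w_c(0)>0$ (the claim \eqref{bw0}), whose proof is a nontrivial factorisation argument in the variables $x=q-1$ and $a=2/d$ showing $A_1(x)A_2(x)>0$ for $x>1$. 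Without (i) and (ii) your argument does not rule out that the coexistence condition fails on part of $\{B<B_+\}$ or holds beyond $B_+$, so the claimed first-order transition on all of $\mathscr{C}$, and the analyticity off $\mathscr{C}$, are not yet established. The rest of your plan (the translation $B^*=0\Leftrightarrow w=w_c(B)$, the pull-back of analyticity, and the chain-rule propagation of the jump) is correct and matches the paper.
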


\begin{remark}[How does $q \neq 2$ arise?]
\label{rem-first-order-q>2}
{\rm %Only when $q>2$, the extremal points $B_+$ and $B_-$ exist and so do the critical curve $\mathscr{C}$.
When $q\searrow 2$, the curve shrinks to the critical point of the Ising model $(w_c^{\rm \sss Ising},0)$ with $w_c^{\rm Ising}=\e^{\beta_c^{\rm \sss Ising}}-1$.}\hfill \ensymboldefinition
\end{remark}

\paragraph{\bf Relation to Basak, Dembo and Sly \cite{BasDemSly23} for Potts models with $q\geq 3$.} We close this section by explaining the relation to the work of Basak, Dembo and Sly in \cite{BasDemSly23} that studies the $q$-states Potts model on locally tree-like regular graphs with an external field. In this paper, \cite[Assumption 1.4]{BasDemSly23} makes an assumption about the form of the pressure per particle in this setting that roughly states that the quenched and annealed pressure per particle agree. More precisely,  \cite[Assumption 1.4]{BasDemSly23} states that the quenched pressure per particle is the same as the Bethe functional. Our results allow us to prove this assumption. We first show that the quenched and annealed pressure per particle are the same for {\em random} $d$-regular graphs, which will be an essential ingredient in the proof:

\begin{theorem}[Quenched equals annealed for random $d$-regular graphs]  
\label{thm-assumption1.4-BasDemSly23}
Let $G_n$ be a random $d$-regular graph with $d\geq 3$ and $n$ vertices. Then, the quenched and annealed pressures per particle are equal, i.e., for all $\beta \geq 0$ and $B \in \R$,
	\eqn{
	\lim_{n\rightarrow \infty} \frac{1}{n} \expec\big[\log Z_{G_n}(q,w,B)\big] = \lim_{n \rightarrow \infty} \frac{1}{n}\log \E[Z_{G_n}(q,w,B)]= \varphi(w,B).
	}
\end{theorem}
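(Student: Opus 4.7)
The plan is to exploit the rank-2 reformulation of Section~\ref{sec-main-results}: via Theorem~\ref{thm-gen-Ising-Z(2)} and Corollary~\ref{cor-termodynamic-limit-|R|C}, on a $d$-regular graph the random cluster partition function equals, up to a deterministic prefactor and a $q^{\mathcal{L}(G_n)}$ multiplicative slack, the partition function of a standard Ising model with inverse temperature $\beta^*$ and constant external field $B^*=kd+h$ (both independent of the realisation of $G_n$). I treat the quenched and annealed directions separately, tying each to established results for the Ising model on random $d$-regular graphs.

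The quenched direction $\tfrac{1}{n}\expec[\log Z_{G_n}(q,w,B)] \to \varphi(w,B)$ would follow from Theorem~\ref{thm-termodynamic-limit-|R|C}, which supplies the in-probability convergence, upgraded to $L^1$ convergence via the deterministic uniform bound on $\tfrac{1}{n}|\log Z_{G_n}(q,w,B)|$ that $d$-regularity affords. For the annealed pressure, Jensen's inequality immediately gives $\liminf_n \tfrac{1}{n}\log \expec[Z_{G_n}(q,w,B)] \geq \varphi(w,B)$, so only the matching upper bound requires real work.

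For the annealed upper bound, assuming first $B\geq 0$, I would apply Theorem~\ref{thm-B>0} together with Corollary~\ref{thm-gen-Ising} to write
\eq{
\expec[Z_{G_n}(q,w,B)]\leq C_n\,\expec\big[q^{\mathcal{L}(G_n)} Z^{\sss\rm Ising}_{G_n}(\beta^*,B^*)\big],
}
where $C_n=(\e^{-B}(q-1))^{n/2}\e^{\beta^* nd/2}$ is deterministic. Splitting on the event $\{\mathcal{L}(G_n)\leq \varepsilon n\}$, I use $q^{\mathcal{L}(G_n)}\leq q^{\varepsilon n}$ on the event, and on its complement I use the trivial bound $\mathcal{L}(G_n)\leq n/3$ combined with a deterministic upper bound $Z^{\sss\rm Ising}_{G_n}(\beta^*,B^*)\leq e^{Cn}$. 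This reduces the problem to two ingredients: (i) quenched-equals-annealed for the Ising model with constant external field on random $d$-regular graphs, which follows by adapting the second-moment and interpolation arguments of \cite{DemMon10a,DemMonSlySun14} from the zero-field setting; and (ii) exponential tail bounds $\pr(\mathcal{L}(G_n)>\varepsilon n)\leq e^{-c_\varepsilon n}$, with $c_\varepsilon$ arbitrarily large for small $\varepsilon$, which I would derive from the facts that random $d$-regular graphs have girth $\Omega(\log n)$ with high probability and asymptotically Poisson short-cycle counts, so that any collection of vertex-disjoint cycle-containing subgraphs must either contain many short cycles (rare) or must cover vertices inefficiently (at most $n/\log n$ components). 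Sending $n\to\infty$ and then $\varepsilon\to 0$ yields $\limsup_n \tfrac{1}{n}\log \expec[Z_{G_n}] \leq \varphi(w,B)$.

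The hard part will be controlling $q^{\mathcal{L}(G_n)}$ in expectation. Unlike standard thermodynamic quantities, $\mathcal{L}(G_n)$ is not smooth under local edge rewirings, so sharp tail estimates cannot be obtained from martingale concentration and must instead be derived combinatorially from the Poisson structure of short-cycle counts together with the girth lower bound. A secondary difficulty is the case $B<0$, where Theorem~\ref{thm-B>0} does not apply directly: for integer $q\geq 3$ I would route through Lemma~\ref{pott=rc}, shift the Potts Hamiltonian by $-Bn$, and use the $q$-fold permutation symmetry of the spin labels to reduce the problem to a nonnegative field on a fresh distinguished state; for non-integer $q$ with $B<0$ a direct analysis of \eqref{Z-2-form} with the roles of $S$ and $V(G_n)\setminus S$ exchanged would be required.
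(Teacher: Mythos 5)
Your overall architecture coincides with the paper's: reduce $Z_{G_n}(q,w,B)$ to the constant-field Ising partition function via the rank-2 representation (as in \eqref{compzg}), invoke quenched-equals-annealed for the Ising model on the random $d$-regular graph, and show that the multiplicative slack $q^{\kL(G_n)}$ is negligible in both the quenched and the annealed computation. The one place where your proposal has a genuine gap is precisely the step you flag as ``the hard part'': the superexponential tail bound on $\kL(G_n)$. Two of the facts you propose to lean on do not hold or do not suffice. First, a random $d$-regular graph with $d\geq 3$ does \emph{not} have girth $\Omega(\log n)$ with high probability: the number of cycles of any fixed length $i\geq 3$ converges to a Poisson variable with mean $(d-1)^i/(2i)$, so the girth equals $3$ with asymptotically positive probability. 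Second, Poisson convergence of short-cycle counts is a distributional limit and gives no control of the form $\pp(\kL_i(G_n)\geq \delta n)\leq \e^{-c_\varepsilon n}$ with $c_\varepsilon$ large; yet exactly such a bound, with rate beating the crude $Z^{\sss\rm Ising}_{G_n}\leq \e^{Kn}$ on the complementary event, is what makes the annealed expectation $\E[q^{\kL(G_n)}Z^{\sss\rm Ising}_{G_n}]$ tractable. The paper supplies this through an explicit combinatorial estimate, \eqref{blg}: writing $\kL(G_n)\leq n/k_n+\sum_{i=2}^{k_n-1}\kL_i(G_n)$ with $k_n=\lfloor(\log n)^{1/4}\rfloor$, it bounds $\pp(\kL_i(G_n)\geq n/k_n^2)$ by a union bound over vertex subsets of size $im$ followed by a direct pairing computation in the configuration model (see \eqref{recursion-prel}--\eqref{pgnai}), yielding $\pp(\kL(G_n)\geq 2n/k_n)\leq\exp(-nk_n/2)$. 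Some such explicit moment/union-bound computation is unavoidable; without it your argument does not close.

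Two smaller points. For ingredient (i) you need not adapt anything: quenched-equals-annealed for the Ising model with constant external field on random $d$-regular graphs is already available (\cite[Theorem 1]{DemMonSlySun14}, \cite[Proposition 3.2]{Can19}), and the paper simply cites it. Concerning $B<0$: your proposed reduction for integer $q$ does not quite work as stated, since rewriting $B\sum_v\indic{\sigma_v=1}=Bn-B\sum_v\indic{\sigma_v\neq 1}$ produces a positive field spread over the $q-1$ remaining colours rather than on a single distinguished state, which is not the model covered by Theorem \ref{thm-termodynamic-limit-|R|C}; note, however, that the paper's own proof as written also only runs through Theorem \ref{thm-B>0} and hence only treats $B\geq 0$, so you are in good company on this point.
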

\medskip

Theorem \ref{thm-assumption1.4-BasDemSly23} has the following direct consequence, showing that the pressure per particle equals the Bethe functional:

\begin{corollary}[Bethe prediction holds for all $B\geq 0$]
Let $G_n$ be a regular graph on $n$ vertices and with degree $d\geq 3$ that converges locally to a $d$-regular graph. Then \cite[Assumption 1.4]{BasDemSly23} of  Potts models with $q\geq 3$ holds for all  $\beta \geq 0$, $B \geq 0$. 
\end{corollary}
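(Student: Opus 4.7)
The plan is to assemble Theorem~\ref{thm-assumption1.4-BasDemSly23}, which identifies the quenched and annealed random cluster pressures with the explicit limit $\varphi(w,B)$ from Theorem~\ref{thm-termodynamic-limit-|R|C}, together with the general Bethe lower bound in Corollary~\ref{cor-Bethe} and the existing Ising-model theory on locally tree-like $d$-regular graphs. By Lemma~\ref{pott=rc}, the Potts pressure per particle differs from the random cluster pressure by the additive constant $B$, so it suffices to show that the Bethe functional for the Potts model on the $d$-regular tree, as formulated in \cite[Assumption~1.4]{BasDemSly23}, reproduces $\varphi(w,B)$.

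The next step is to invoke Corollary~\ref{cor-termodynamic-limit-|R|C}, which re-expresses $\varphi(w,B)$ in terms of the Ising pressure $\varphi^{\rm \sss Ising}(\beta^*,B^*)$ with the explicit parameters $\beta^*$ and $B^*$ from Theorem~\ref{thm-termodynamic-limit-|R|C}, plus the additive constants $\tfrac{\beta^* d}{2}+\tfrac{1}{2}\log(\e^{-B}(q-1))$. On a locally tree-like $d$-regular graph, \cite{DemMon10a} (see also \cite{DemMon10b,DemMonSlySun14,DomGiaHof10}) establishes that $\varphi^{\rm \sss Ising}(\beta^*,B^*)$ admits a Bethe variational representation, namely a supremum over fixed points of the standard Ising belief propagation recursion on the $d$-regular tree. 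This turns our formula \eqref{thermodynamic-limit-|R|C-q>2-|R|RG} into an explicit Bethe-type expression.

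The final step is to identify this Ising Bethe functional with the Potts Bethe functional appearing in \cite[Assumption~1.4]{BasDemSly23}. Using Theorem~\ref{thm-gen-Ising-Z(2)} and the reparametrisation \eqref{psi-two-def}--\eqref{B0-choice}, belief propagation fixed points for the two-spin model with weights $(\psi,\overline{\psi})$ on the $d$-regular tree are in explicit bijection with those of the corresponding Ising model. These in turn are in one-to-one correspondence, via the $(q-1)$-fold permutation symmetry of the non-distinguished spins, with the symmetry-reduced Potts belief propagation fixed points—which is precisely the object entering \cite[Assumption~1.4]{BasDemSly23}. The lower bound $\varphi(w,B)\geq \varphi_{\rm B}^{\rm Potts}(w,B)$ follows from Corollary~\ref{cor-Bethe} (combined with Lemma~\ref{pott=rc} and Corollary~\ref{cor-B>0}, so that the cycle correction $\kL(G_n)/n$ vanishes), while the matching upper bound is already encoded in Theorem~\ref{thm-assumption1.4-BasDemSly23}.

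The main obstacle is the final identification step: one must make the bijection between Potts and two-spin belief propagation fixed points transparent enough that the variational formula for $\varphi(w,B)$ literally reproduces the functional defined in \cite[Assumption~1.4]{BasDemSly23}. This amounts to a routine but somewhat tedious parameter-matching exercise: the Ising recursion for the tangent of the effective cavity field must be shown to coincide with the Potts recursion written in terms of the ratio between the distinguished-colour and non-distinguished-colour cavity weights, once both are expressed using the parameters $\beta^*, k, h, B_0$ given in \eqref{h-choice}--\eqref{B0-choice}.
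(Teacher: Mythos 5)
Your proposal reaches the right ingredients but leaves its decisive step unproven, and that step is precisely the one the paper's own argument shows you can avoid. The paper's proof is soft: it notes that (i) the quenched pressure depends only on the local limit, so it is the same for every $d$-regular sequence converging locally to the $d$-regular tree; (ii) \cite{DemMonSlySun14} (cf.\ \cite[Theorem 3]{DemMonSlySun14}, as invoked in the proof of Theorem~\ref{thm-assumption1.4-BasDemSly23}) already proves that the \emph{annealed} pressure of the random $d$-regular graph equals the Potts Bethe functional $\Phi^\star(w,B)$ for all $B\geq 0$; and (iii) Theorem~\ref{thm-assumption1.4-BasDemSly23} gives quenched $=$ annealed. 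Chaining these three facts immediately yields that the quenched pressure equals the Bethe functional, which is exactly \cite[Assumption 1.4]{BasDemSly23}. No comparison of belief-propagation fixed points is needed.

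By contrast, your route hinges on identifying the Ising/two-spin Bethe variational formula (coming from Corollary~\ref{cor-termodynamic-limit-|R|C} and the reparametrisation \eqref{psi-two-def}--\eqref{B0-choice}) with the Potts Bethe functional of \cite[Assumption 1.4]{BasDemSly23} via a bijection of BP fixed points. You flag this as ``the main obstacle'' and call it routine, but you never carry it out, and both halves of your argument depend on it: the lower bound from Corollary~\ref{cor-Bethe} only bounds $Z_G(q,w,B)$ below by the Bethe partition function of the \emph{two-spin} model, not of the Potts model, so without the identification it does not give $\varphi(w,B)\geq \varphi_{\rm B}^{\rm Potts}(w,B)$; and the ``matching upper bound'' is not actually encoded in Theorem~\ref{thm-assumption1.4-BasDemSly23}, whose statement only asserts quenched $=$ annealed and says nothing about the Bethe functional. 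As written, the proposal therefore has a genuine gap. The fix is either to carry out the fixed-point correspondence in full (a nontrivial computation, since one must check that the $(q-1)$-symmetric Potts BP recursion collapses exactly onto the Ising cavity recursion with parameters $\beta^*,k,h$), or, more simply, to drop the identification entirely and cite the annealed-equals-Bethe result of \cite{DemMonSlySun14} as the paper does.
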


\proof Let $G_n$ be a regular graph on $n$ vertices and with degree $d\geq 3$ that converges locally to a $d$-regular tree. The limit of the pressure per particle for Ising models exists by local convergence, as shown by Dembo, Montanari, Sly and Sun \cite{DemMonSlySun14}, and the limit of the pressure per particle does not depend on the precise sequence but only on the local limit being a regular tree. Thus, we only need to show that the limit is equal to the Bethe functional, as this is what \cite[Assumption 1.4]{BasDemSly23} states. Further, that same paper proves that the annealed pressure of the random $d$-regular graph equals the Bethe functional for all $B\geq 0$. By Theorem \ref{thm-assumption1.4-BasDemSly23}, the quenched and annealed pressure per particle agree for the random $d$-regular graph and all $B\geq 0$. This means that, for any $d$-regular graph $G_n$ that converges locally to the regular tree, the quenched pressure per particle converges to the Bethe functional.
\qed

\subsection{Pressure per particle for the extended Ising model with fixed-sign fields}
\label{sec-eIsing-pressure}
In this section, we investigate the pressure per particle for the extended Ising model with external fields that have a fixed sign, i.e., we consider
	\eqn{ \label{def-eising-rep}
	Z_{G}^{\sss \rm eIsing}(\beta,k,h):=	 \sum_{\bsigma\in \{-1,1\}^{V(G)}}\exp \Big( \beta \sum_{\{u,v\}\in E(G)} \sigma_u \sigma_v + \sum_{u \in V(G)} B_v \sigma_u \Big),
	}
where we define the vertex-dependent external fields $(B_v)_{v\in V(G)}$ by
	\eqn{
	B_v=k d_v+h.
	}
The following theorem shows that the pressure per particle of such an extended Ising model exists provided the external fields have fixed sign:

\begin{theorem}[Thermodynamic limit of extended Ising model]
\label{thm-termodynamic-limit-extended-Ising}
Let $(G_n=(V(G_n),E(G_n)))_{n\geq 1}$ be a sequence of locally tree-like random graph having $|V(G_n)|=n$ vertices. Suppose that $\beta \geq 0$, $k\geq 0$ and  $kd_{\min}+h > 0$, where
	\eqn{ \label{def-dmin}
	 d_{\min}= \inf_{n \geq 1} \min_{v \in V(G_n)} d_v
	}
is the minimal degree in the graph. Then, as $n \rightarrow \infty$, 
	\eqn{
	\label{thermodynamic-limit-|R|C-extended-Ising}
	\frac{1}{n} \log Z_{G_n}^{\rm \sss eIsing}(\beta,k,h)\convp  \varphi^{\rm \sss eIsing}(\beta,k,h).
	}
\end{theorem}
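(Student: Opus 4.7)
The plan is to adapt the framework of Dembo and Montanari \cite{DemMon10a} (see also \cite{DemMon10b,DemMonSlySun14,DomGiaHof10,DomGiaHof12}), which establishes the thermodynamic limit of the Ising pressure per particle on locally tree-like graphs with constant positive external field, to the vertex-dependent fields $B_v = k d_v+h$ considered here. The decisive observation is that $k\geq 0$ together with $k d_{\min}+h>0$ yields $B_v\geq k d_{\min}+h>0$ uniformly in $v$, so we remain squarely in the ferromagnetic regime with a uniform positive lower bound on the field at every vertex, where GKS correlation inequalities and FKG monotonicity are available.

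First, I would use the thermodynamic identity
\begin{align*}
\frac{1}{n}\log Z_{G_n}^{\rm \sss eIsing}(\beta,k,h)=\frac{1}{n}\log Z_{G_n}^{\rm \sss eIsing}(0,k,h)+\int_0^\beta\frac{1}{n}\sum_{\{u,v\}\in E(G_n)}\langle\sigma_u\sigma_v\rangle_{\beta',k,h}\,d\beta'.
\end{align*}
At $\beta=0$ the partition function factorises over vertices as $\frac{1}{n}\sum_{v\in V(G_n)}\log(2\cosh(k d_v+h))$. Writing $\log(2\cosh(x))=|x|+\log(1+\e^{-2|x|})$ decomposes this into a linear-in-degree part, which converges to $k\,\expec[D]+h$ using $\expec[D_n]\to\expec[D]$, and a bounded part, which converges to $\expec[\log(1+\e^{-2(kD+h)})]$ by bounded convergence and $D_n\convd D$ (Condition \ref{cond-uniform-sparse}).

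Next, I would prove convergence of the average edge correlation $\tfrac{1}{n}\sum_{\{u,v\}\in E(G_n)}\langle\sigma_u\sigma_v\rangle_{\beta',k,h}$. For a uniformly random rooted vertex $v\in V(G_n)$ and a uniformly chosen neighbour $u$, local convergence (Condition \ref{cond-locally-tree-like}) implies that the local neighbourhood of the oriented edge $(v,u)$ in $G_n$ converges to the corresponding object on the local weak limit tree $(G,\vertex)$, with each vertex $w$ carrying field $B_w = k d_w+h$. On $(G,\vertex)$, the belief propagation (cavity) recursion with these vertex fields admits a unique fixed point: iterating from all-$+$ and all-$-$ boundary conditions on balls of growing radius produces monotone sequences of cavity magnetisations whose common limit exists because the uniform positivity $B_w\geq k d_{\min}+h>0$ makes the BP map a strict contraction on almost every realisation of $(G,\vertex)$. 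Sandwiching the finite-volume correlation $\langle\sigma_u\sigma_v\rangle_{G_n}$ between $\pm$ boundary-condition versions on balls of growing radius via GKS, and invoking this tree uniqueness, yields convergence of $\expec[\langle\sigma_u\sigma_v\rangle_{G_n}]$ to the analogous two-point function under the unique Gibbs measure on $(G,\vertex)$. Fubini together with the uniform bound $|\langle\sigma_u\sigma_v\rangle|\leq 1$ then upgrades pointwise convergence of the integrand in $\beta'$ to convergence of the integral, yielding the required convergence in probability.

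The principal obstacle is establishing uniqueness of the infinite-volume Ising Gibbs measure on the random limit tree $(G,\vertex)$ with the vertex-dependent fields $B_w = k d_w+h$, in a form strong enough to transfer back to the finite graphs, when the degree distribution $D$ has unbounded support. Although the monotonicity argument delivers uniqueness on almost every realisation of $(G,\vertex)$, the possibly unbounded nature of $B_w$ forces one to verify moment conditions guaranteeing that the relevant expectations $\expec[\langle\sigma_\vertex\sigma_J\rangle]$ are well-defined and depend continuously on $\beta$. Since larger $B_w$ only strengthens uniqueness and tightens the BP contraction, no essentially new difficulty arises; the adaptation from the constant-field case of \cite{DemMon10a,DemMonSlySun14} is primarily degree-dependent bookkeeping rather than new conceptual input.
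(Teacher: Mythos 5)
Your proposal is correct in its essentials, and it rests on the same core ingredients as the paper's argument: the observation that $k\geq 0$ and $kd_{\min}+h>0$ give a uniform positive lower bound $B_v\geq kd_{\min}+h$ on all vertex fields, GKS/FKG sandwiching of local observables between $+$ and free boundary conditions on growing balls, local convergence to transfer these to the limiting tree, and uniqueness of the tree Gibbs measure (equivalently, agreement of the $+$ and free message-passing fixed points) guaranteed by the uniform field positivity --- the paper invokes \cite[Lemma 3.1]{DomGiaHof10} for exactly this step. Where you differ is the choice of interpolation parameter. You differentiate in $\beta$ and integrate the internal energy $\tfrac{1}{n}\sum_{\{u,v\}\in E(G_n)}\langle\sigma_u\sigma_v\rangle$ from the factorised base point $\beta=0$; the paper instead differentiates in $h$, controls the magnetisation $\tfrac{1}{n}\sum_{u}\langle\sigma_u\rangle$, and integrates from $h_0$ up to $h=\infty$, where all spins align. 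Your route (essentially the original Dembo--Montanari strategy) buys a trivial endpoint computation, at the cost of working with a two-point, edge-averaged observable, for which you must handle the degree-biasing inherent in summing over edges: the uniform sparsity condition $\expec[D_n]\to\expec[D]$ is needed not only at $\beta=0$ but also to truncate the contribution of high-degree vertices to the edge average before applying local convergence. The paper's route works with the simpler one-point observable and connects more directly to the belief-propagation representation of the limit (its Theorem \ref{thm-pressure-per-particle-locally-tree-like}), at the cost of a less trivial analysis at the $h\to\infty$ endpoint. Neither choice creates a genuine gap; your remaining concerns about unbounded degrees are, as you say, bookkeeping, and are resolved by the uniform integrability in Condition \ref{cond-uniform-sparse} together with the fact that a larger field only strengthens the contraction.
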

By symmetry of the spins, the same result holds when $k\leq 0$ and  $kd_{\min}+h< 0$. We refer to Theorem \ref{thm-pressure-per-particle-locally-tree-like} for a more detailed statement, in which a reprentation of $ \varphi^{\rm \sss eIsing}(\beta,k,h)$ is given in terms of message passing recursion relations. For $k\geq 0$, the condition $k d_{\min}+h > 0$ is verified either when $h > 0$ or when  $kd_{\min}$ is sufficiently large. While the first case  can be interpreted as the high external field regime, the second one corresponds to the low temperature regime. The following corollary investigates the high external field, and low temperature, regimes of the random cluster model on locally tree-like random graphs:

\begin{corollary}[High field or low temperature regimes]
Consider the random cluster model with parameters $q\geq 2$, $w=\e^\beta -1$ and external field $B \geq 0$ on a sequence of locally tree-like random graphs having $|V(G_n)|=n$ vertices.    
	\begin{description}
		\item [High external field regime] If $B > \log (q-1) $ then  
		\eqn{ \label{eih1}
		\lim_{n\rightarrow \infty} \frac{1}{n} \log Z_{G_n}(q,w,B)=\frac{\beta^*}{2}\expec[D_o]+\frac{1}{2} \log(\e^{-B}(q-1))
	+\varphi^{\rm \sss eIsing}(\beta^*,k,h).	
	}
\item[Low temperature regime] Suppose that 
\eq{
d_{\min} \geq 3, \qquad w > \frac{(q-1)^{2/d_{\min}}-1}{1-(q-1)^{2/d_{\min}-1}},
}
where $d_{\min}$ is the minimal degree defined in \eqref{def-dmin}. Then, \eqref{eih1} holds.
	\end{description}
\end{corollary}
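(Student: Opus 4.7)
The plan is to combine Corollary \ref{cor-termodynamic-limit-|R|C}, which rewrites the thermodynamic limit of the random cluster pressure in terms of the pressure per particle of the extended Ising model with parameters $(\beta^*,k,h)$, with Theorem \ref{thm-termodynamic-limit-extended-Ising}, which establishes the existence of the eIsing pressure provided $kd_{\min}+h>0$. Once both are invoked, the corollary reduces to a purely algebraic check of this positivity condition in each of the two regimes, using that \eqref{k-choice} gives $k=\tfrac{1}{4}\log\bigl((1+w)/(1+w/(q-1))\bigr)\geq 0$ for $q\geq 2$, and \eqref{h-choice} gives $h=\tfrac{1}{2}(B-\log(q-1))$.

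For the high external field regime, I would simply note that $B>\log(q-1)$ is by definition equivalent to $h>0$, hence $kd_{\min}+h\geq h>0$ since $k\geq 0$. Theorem \ref{thm-termodynamic-limit-extended-Ising} then applies, and inserting its conclusion into Corollary \ref{cor-termodynamic-limit-|R|C} yields \eqref{eih1}.

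For the low temperature regime, the key observation is that $h$ is strictly increasing in $B$, so the tightest case of the inequality $kd_{\min}+h>0$ over $B\geq 0$ occurs at $B=0$, where it becomes $kd_{\min}>\tfrac{1}{2}\log(q-1)$. Substituting the formula for $k$, this is equivalent to $(1+w)/(1+w/(q-1))>(q-1)^{2/d_{\min}}$; cross-multiplying by the positive quantity $1+w/(q-1)$ and then by $(q-1)^{2/d_{\min}-1}$, and collecting the $w$-terms, one obtains precisely the stated threshold $w>\frac{(q-1)^{2/d_{\min}}-1}{1-(q-1)^{2/d_{\min}-1}}$. Here one uses $d_{\min}\geq 3$ and $q>2$ to ensure that $1-(q-1)^{2/d_{\min}-1}$ is strictly positive, which is exactly why this hypothesis appears in the statement. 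Monotonicity in $B$ then propagates the verified bound at $B=0$ to all $B\geq 0$, Theorem \ref{thm-termodynamic-limit-extended-Ising} applies, and \eqref{eih1} follows once more through Corollary \ref{cor-termodynamic-limit-|R|C}.

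I do not foresee any substantial obstacle, since both underlying results have already been proved earlier in the paper; the entire content of the corollary is the short algebraic manipulation identifying the stated threshold on $w$ with the positivity of the vertex-dependent external field $kd_{\min}+h$, taken at its worst value $B=0$.
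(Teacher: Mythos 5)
Your proposal is correct and follows exactly the route the paper intends (the paper leaves the proof implicit, but the surrounding discussion states that the corollary amounts to verifying $kd_{\min}+h>0$ via $h>0$ in the high-field case and via $kd_{\min}>\tfrac12\log(q-1)$ at the worst case $B=0$ in the low-temperature case, then combining Corollary \ref{cor-termodynamic-limit-|R|C} with Theorem \ref{thm-termodynamic-limit-extended-Ising}). Your algebraic identification of the threshold on $w$, including the role of $d_{\min}\geq 3$ and $q>2$ in keeping $1-(q-1)^{2/d_{\min}-1}$ positive, is exactly right.
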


%\RvdH{TO DO 5: Did I understand correctly that some of these limits were unexpected, or that there was confusion about this in the literature, Hao?}

Theorem \ref{thm-termodynamic-limit-extended-Ising} is proved in Section \ref{sec-termodynamic-limit-extended-Ising}. While we do not give the precise formula for $\varphi^{\rm \sss eIsing}(\beta,k,h)$ here, its form will follow from the proof (see Theorem \ref{thm-pressure-per-particle-locally-tree-like} below). The proof of Theorem \ref{thm-termodynamic-limit-extended-Ising} follows the proof in \cite{Hofs25}, which, in turn, is inspired by \cite{DemMon10a, DemMonSlySun14,DomGiaHof10}.

\invisible{\subsection{Results for random cluster models with $q\in(1,2)$}
\label{sec-|R|C-q-(1,2)}
In this section, we show that the pressure per particle for random cluster models with $q\in(1,2)$ exists. There, we can rely on Theorem \ref{thm-termodynamic-limit-extended-Ising}. This immediately gives the following result:

\Hao{The result of this part is not correct. It should be removed/reconstructed.}
\RvdH{Indeed, I miscalculated the sign of $k$. It seems that for $B$ close to 0, the sign of $k$ and $B_0$ are always different. What does that tell us?}

\begin{theorem}[Thermodynamic limit of random cluster model for $q\in(1,2)$]
\label{thm-termodynamic-limit-|R|C}
Let $G_n=(V(G_n),E(G_n))$ be a  locally tree-like random graph having $|V|=n$ vertices. Fix $q\in(1,2)$. Then, for all $\beta\geq 0$ and $B$ such that $B\geq \log(q-1)$,
	\eqn{
	\label{thermodynamic-limit-|R|C-q(1,2)-|R|RG}
	\lim_{n\rightarrow \infty} \frac{1}{n} \log Z_G(q,w,B)=\frac{\beta^*d}{2}+\frac{1}{2} \log(\e^{-B}(q-1))
	+\varphi^{\rm \sss eIsing}(\beta^*,k,h_2),
	}
where $\beta^*$ is given in \eqref{beta-choice}, $k$ in \eqref{k-choice} and $h_2$ in \eqref{h-choice-2}.
\end{theorem}

\proof This follows immediately from Corollary \ref{cor-termodynamic-limit-|R|C} together with Theorem \ref{thm-termodynamic-limit-extended-Ising}, where we note that, by \eqref{h-choice-2},
	\eqn{
	h_2=\frac{1}{2} \log (\overline{\psi}_2(+)/\overline{\psi}_2(-))=-\frac{1}{2} \log(q+\e^B-2)\leq 0
	}
precisely when $B\geq \log(q-1)$ and, by \eqref{k-choice}, 
	\eqn{
	k=\frac{1}{4}\log \left(\frac{1+w}{1+w/(q-1)}\right)\leq 0,
	}
since $q\in(1,2)$ and $w\geq 0$. Thus, $h_2$ and $k$ have the same sign, so that we may apply Theorem \ref{thm-termodynamic-limit-extended-Ising}. Since $ \log(q-1)<0$, the range of values of $B$ is an open interval containing 0.
\qed

\RvdH{Do we wish to prove other properties of this setting?}}

\subsection{Discussion and open problems}
\label{sec-discussion-open-problems}
In this section, we discuss our results and state open problems and conjectures. 
\medskip

\paragraph{\bf Relation to the literature for regular graphs.} Dembo, Montanari, Sly and Sun \cite{DemMonSlySun14} show that  \cite[Assumption 1.4]{BasDemSly23}  holds for $d$ even and all $B\geq 0$. Bencs, Borb\'enyi and Csikv\'ary \cite{BenBorCsi23} prove that it holds for all $d$ and $B=0$, improving upon an earlier result by Helmuth, Jenssen and Perkins \cite{HelJenPer23} that applies to $q$ sufficiently large. With Theorem \ref{thm-assumption1.4-BasDemSly23}, we now know that it holds for {\em all} $\beta>0$ and $B\geq 0$. The main results in \cite{BasDemSly23} are related to the existence of the limiting Potts measure on locally tree-like regular graphs in the local sense, the uniqueness of these limiting measures, and how these relate to boundary conditions. For the phase diagram of some values $q,d$, we refer to \cite[Figure 1]{BasDemSly23}. We also refer to \cite{BasDemSly23} for further consequences of \cite[Assumption 1.4]{BasDemSly23}. 
\medskip

Aside from the existence and shape of the pressure per particle, and the phase transitions that arise from their non-analyticities, there are deep connections to other problems that have received substantial attention in the literature in the past years. The {\em dynamics} of Potts models, and the metastable states that arise from the phase transitions, are investigated by Coja-Oghlan et al.\ in \cite{CojGalGolRavSteVig23}, to which we also refer for the history of the problem and appropriate references. Combinatorial aspects, including the relation to Tutte polynomials, are highlighted in \cite{BenBorCsi23} and the references therein. For the relation to the computational hardness of partition functions, we refer to \cite{GalSteVigYan16} and the references therein.
\medskip

\paragraph{\bf Thermodynamic limits of extended Ising models.} It would be highly natural, as well as interesting, to show that the partition function of the extended Ising model exists, even when not all external fields have the same sign. This is our next conjecture:

\begin{conjecture}[Pressure per particle of extended Ising model]
\label{conj-eising}
Let $G_n$ be a graph sequence that is locally tree-like. Let $Z_{G_n}^{\sss \rm eIsing}(\beta,k,h)$ denote the partition function of the extended Ising model with   parameters $\beta,k,h $ defined as in \eqref{def-eising}. Then, there exists $\varphi^{\sss \rm eIsing}(\beta,k,h)$ such that
	\eqn{
		\varphi_n^{\sss \rm eIsing}(\beta,k,h)=\frac{1}{n}\log Z_{G_n}^{\sss \rm eIsing}(\beta,k,h)
		\rightarrow \varphi^{\sss \rm eIsing}(\beta,k,h).
	}
\end{conjecture}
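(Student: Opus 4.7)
My approach combines self-averaging via FKG, the Aizenman--Sims--Starr cavity method, and matching upper/lower bounds anchored by Ruozzi's Bethe inequality. The key observation is that although GKS monotonicity fails when the vertex fields $B_v=kd_v+h$ change sign with $d_v$, the ferromagnetic Ising model satisfies the FKG lattice condition for \emph{any} external fields, and this will be enough to drive concentration.

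The first step is to prove self-averaging: $\varphi_n^{\sss\rm eIsing}-\E[\varphi_n^{\sss\rm eIsing}]\convp 0$. Conditionally on $G_n$, FKG yields the usual log-Sobolev and variance bounds on local functionals of the Gibbs measure, and an edge-exposure / vertex-exposure martingale for the random graph, combined with McDiarmid-type bounded differences (where the increments are controlled by local degrees thanks to uniform sparsity), gives fluctuations of $\log Z_{G_n}^{\sss\rm eIsing}$ that are $o(n)$. This reduces the conjecture to the convergence of the deterministic sequence $\E[\varphi_n^{\sss\rm eIsing}]$.

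The second step identifies the candidate limit via the cavity method. Writing
\begin{equation*}
\log Z_n^{\sss\rm eIsing}-\log Z_{n-1}^{\sss\rm eIsing}
\end{equation*}
as the log of a local partition function involving the spins in the boundary of the newly added vertex together with the effective messages from the rest of the graph, each increment becomes an explicit functional of the Gibbs spin distribution in a neighborhood of that vertex. Local weak convergence of $G_n$ to the limiting rooted tree $(G,o)$ then suggests
\begin{equation*}
\lim_{n\to\infty}\E[\varphi_n^{\sss\rm eIsing}]=\E\bigl[\log R_o(\mu)\bigr]-\tfrac12\E[D_o]\,\E\bigl[\log R_e(\mu)\bigr],
\end{equation*}
where $R_o,R_e$ are root and edge contributions expressible in terms of a belief-propagation fixed point $\mu$ on $(G,o)$. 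To pass to the limit rigorously, I would combine Ruozzi's bound (Corollary \ref{cor-Bethe}) for the lower direction with an interpolation scheme in the spirit of Dembo--Montanari for the upper direction; crucially, the latter can be formulated using only FKG/log-supermodularity, avoiding the need for GKS.

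The principal obstacle is the possible non-uniqueness of belief-propagation fixed points on the limiting tree when $kd_v+h$ is sign-changing. In the positive-field regime treated in Theorem \ref{thm-termodynamic-limit-extended-Ising}, plus/minus boundary conditions sandwich every Gibbs measure and GKS guarantees a unique selected fixed point; outside that regime a genuine first-order transition could produce several candidate limits $\mu$ with distinct associated pressures, and a subsequence of $(G_n)$ could in principle track different fixed points. Resolving this is the heart of the conjecture, and I expect it to require either a contraction estimate for the BP recursion in a weighted norm that tolerates mixed signs, or an analytic-continuation argument extending uniqueness from the GKS-tractable region $kd_{\min}+h>0$ to the full parameter range, possibly using Lee--Yang-type zero-freeness of the two-spin partition function in $(\beta,h)$ established in other contexts for ferromagnetic models.
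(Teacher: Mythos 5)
This statement is stated in the paper as a \emph{conjecture}, not a theorem: the paper offers no proof and explicitly identifies the obstruction, namely that the vertex-dependent fields $B_v=kd_v+h$ can change sign with $d_v$, so that the standard machinery (GKS monotonicity, identification of the ground state, selection of a boundary condition) breaks down. Your proposal is a sensible research plan, but it is not a proof, and you say so yourself: the ``principal obstacle'' you name at the end --- non-uniqueness of belief-propagation fixed points on the limiting tree and the possibility that different subsequences of $(G_n)$ track different fixed points --- is precisely the open core of the conjecture, and you leave it unresolved, offering only candidate strategies (a contraction estimate in a weighted norm, or Lee--Yang analytic continuation) without carrying any of them out.

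Beyond the acknowledged gap, two intermediate steps are asserted rather than established. First, your self-averaging step is fine but for the wrong reason: concentration of $\log Z_{G_n}^{\sss \rm eIsing}$ follows from bounded differences alone (adding or deleting an edge changes $\log Z$ by at most $2|\beta^*|+4|k|$), with no need for FKG, and in any case FKG does not yield log-Sobolev inequalities; more importantly, concentration only reduces the problem to convergence of $\E[\varphi_n^{\sss \rm eIsing}]$, which is where all the difficulty lives. Second, the claim that the Dembo--Montanari interpolation for the upper bound ``can be formulated using only FKG/log-supermodularity, avoiding the need for GKS'' is exactly the step that would fail: that interpolation controls the cavity increments by sandwiching the finite-volume marginals between plus and free boundary conditions and showing the two squeeze together as the neighbourhood radius grows, and this sandwiching is a GKS/monotonicity statement that is unavailable once the fields have mixed signs. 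FKG gives positive association of increasing events within a fixed Gibbs measure, not monotonicity in the boundary condition. Similarly, Corollary \ref{cor-Bethe} lower-bounds $Z_{G_n}$ by the finite-graph Bethe optimum $Z_B(G_n)$, but converting $\tfrac1n\log Z_B(G_n)$ into the tree functional evaluated at a specific BP fixed point again requires the very uniqueness/selection statement that is missing. In short, the conjecture remains open after your argument, in the same place the paper says it is open.
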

Conjecture \ref{conj-eising} has the following immediate corollary:

\begin{corollary}[Pressure per particle of random cluster model]
	\label{cor-pressure}
	Let $G_n$ be a graph sequence that is locally tree-like.  Assume that Conjecture \ref{conj-eising} holds for $G_n$. Then,
	\eqn{
	\frac{1}{n}\log Z_{G_n}(q,w,B)
		\rightarrow \varphi(q,w,B):= \frac{1}{2} \log (\e^{-B}(q-1)) +  \frac{1}{2}\beta^* \E[D_o] +  \varphi^{\sss \rm eIsing}(\beta^*,k,h),
	}
	where $\beta^*,k,h$ are given in \eqref{h-choice}--\eqref{beta-choice}.
\end{corollary}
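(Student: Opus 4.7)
The plan is to simply combine three results already established (or assumed) in the paper: the rank-2 approximation (Corollary \ref{cor-B>0}), the explicit rewriting as an extended Ising model (Corollary \ref{thm-gen-Ising}), and the conjectured existence of the thermodynamic limit for the extended Ising model (Conjecture \ref{conj-eising}). No new ingredient seems required; the only issue to track carefully is the mode of convergence, which for random locally tree-like graphs should be convergence in probability.

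First I would start from the exact identity in Corollary \ref{thm-gen-Ising},
\[
Z^{\sss (2)}_{G_n}(q,w,B)= (\e^{-B}(q-1))^{n/2}\, \e^{\beta^*|E(G_n)|}\, Z_{G_n}^{\sss \rm eIsing}(\beta^*,k,h),
\]
take logarithms and divide by $n$ to obtain
\[
\frac{1}{n}\log Z^{\sss (2)}_{G_n}(q,w,B)=\frac{1}{2}\log(\e^{-B}(q-1))+\beta^*\frac{|E(G_n)|}{n}+\frac{1}{n}\log Z_{G_n}^{\sss \rm eIsing}(\beta^*,k,h).
\]
Next, by Corollary \ref{cor-B>0} applied to $G_n$ (whose local limit is almost surely a tree by Condition \ref{cond-locally-tree-like}), the left-hand side differs from $\frac{1}{n}\log Z_{G_n}(q,w,B)$ by $o(1)$, so it suffices to analyse each term on the right-hand side separately.

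For the edge-count term, Condition \ref{cond-uniform-sparse} gives $\frac{|E(G_n)|}{n}=\frac{1}{2}\expec[D_n]\rightarrow\frac{1}{2}\expec[D_o]$, where the convergence here is deterministic if $G_n$ is deterministic and in probability (combined with convergence of the mean) for random $G_n$. For the extended Ising term, Conjecture \ref{conj-eising} is precisely the assumption that
\[
\frac{1}{n}\log Z_{G_n}^{\sss \rm eIsing}(\beta^*,k,h)\;\longrightarrow\;\varphi^{\sss \rm eIsing}(\beta^*,k,h).
\]
The constant term $\tfrac{1}{2}\log(\e^{-B}(q-1))$ requires no work. Summing the three contributions yields the claimed identity
\[
\frac{1}{n}\log Z_{G_n}(q,w,B)\;\longrightarrow\;\tfrac{1}{2}\log(\e^{-B}(q-1))+\tfrac{1}{2}\beta^*\expec[D_o]+\varphi^{\sss \rm eIsing}(\beta^*,k,h).
\]

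There is no genuine obstacle here, as the corollary is just a bookkeeping consequence: all the real work is packaged into Corollaries \ref{cor-B>0} and \ref{thm-gen-Ising} on the one hand, and into Conjecture \ref{conj-eising} on the other. The only minor care needed is to state that the convergence is in probability (since $G_n$ is random and both the uniform-sparsity statement and Conjecture \ref{conj-eising} are formulated in that mode), and to verify that an \emph{in-probability} $o(1)$ error from Corollary \ref{cor-B>0} combined with an \emph{in-probability} limit for the extended Ising partition function indeed gives convergence in probability of the whole expression, which follows from Slutsky's theorem applied to the three summands.
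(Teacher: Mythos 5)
Your proposal is correct and is essentially the paper's own argument: the paper's one-line proof invokes Corollary \ref{cor-termodynamic-limit-|R|C} (which already packages Corollaries \ref{cor-B>0} and \ref{thm-gen-Ising}) together with Conjecture \ref{conj-eising} and the observation that $|E(G_n)|/n\rightarrow \expec[D_o]/2$, which is exactly the chain you spell out. Your additional remarks about the mode of convergence are a reasonable elaboration but introduce nothing beyond the paper's route.
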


\proof This immediately follows since $|E(G_n)|/n\rightarrow \expec[D_o]/2$.
\qed
\medskip

The difficulty in proving Conjecture \ref{conj-eising} is that the external fields in our representation in terms of the extended Ising model have both positive and negative contributions. Because of this, we cannot use the usual techniques for proving convergence of the pressure per particle. For example, it is not even clear what the {\em ground state}, corresponding to $\beta=\infty$ or zero temperature, of this model is. 
%It is reasonable to assume that this is the all-one state when $\sum_{v\in V(G_n)} k d_v>-|V(G_n)|h$, and the all-minus state for  $\sum_{v\in V(G_n)} k d_v<-|V(G_n)|h$.
\medskip

\paragraph{\bf Negative external fields.}The Ising model, arising for $q=2$, is perfectly symmetric, so that $Z_{G}(q,w,-B)=Z_{G}(q,w,B)$, which allows us to translate results for $B\leq 0$ to $B\geq 0$. This property, however, does not extend to general $q$. Our results only apply to non-negative $B$. We next explore what happens for $B<0$ for locally tree-like regular graphs: %However, the lower bound in Corollary \ref{cor-Bethe} applies to {\em all} values of the parameters. This has the following consequence for locally tree-like random regular graphs:

\begin{remark}[Implications of lower bound for regular graphs]
\label{rem-regular-negative-B}
{\rm Let $G_n$ be a regular graph that is locally tree-like. Note that the rank-2 {\em lower bound} in Lemma \ref{lowerzq} is always true. We next look at consequences of this fact. Suppose that
	\eqn{
	\label{pressure-negative-B-a}
	\frac{1}{n}\log Z^{\sss (2)}_G(q,w,B)\convp \varphi_{\rm B}^{\rm \sss(e)}(w,B),
	}
as well as
	\eqn{
	\label{pressure-negative-B-b}
	\frac{1}{n}\log \expec[Z^{\sss (2)}_G(q,w,B)]\rightarrow \varphi_{\rm B}^{\rm \sss(e)}(w,B),
	}
so that quenched and annealed pressures agree for the rank-2 approximation. Here, we think of $\varphi_{\rm B}^{\rm \sss(e)}(w,B)$ as the Bethe 
functional of the (extended) Ising model, but this equality is not needed in what follows. Note that  the lower bound in terms of the 
Bethe functional follows from  Corollary \ref{cor-Bethe}.
\smallskip

Further, assume that we can also show the annealed statement that
	\eqn{
	\label{annealed-pressure-negative-B}
	\frac{1}{n}\log \expec[Z_G(q,w,B)]\rightarrow \varphi_{\rm B}(w,B),
	}
where the latter is the Bethe functional of the original random cluster measure.  Our final assumption is that the (hopefully reasonably explicit) formulas for 
$\varphi_{\rm B}(w,B)$ and $\varphi_{\rm B}^{\rm \sss(e)}(w,B)$ allow us to relate them as
	\eqn{
	\label{pressure-negative-B-c}
	\varphi_{\rm B}(w,B)=\varphi_{\rm B}^{\rm \sss(e)}(w,B),
	}
Of course, it may be that this is only true for part of the parameter choices of $(\beta,B)$.
\smallskip

Then we claim that the quenched and annealed pressure per particle for locally tree-like regular graphs converges to $\varphi_{\rm B}(w,B)=\varphi_{\rm B}^{\rm \sss(e)}(w,B)$. This may be used to extend our results to  $B<0$. Indeed, by Lemma \ref{lowerzq},
	\eqn{
	\frac{1}{n}\log Z_G(q,w,B)\geq \frac{1}{n}\log Z^{\sss (2)}_G(q,w,B) \convp \varphi_{\rm B}^{\rm \sss(e)}(w,B).
	}
Further, since the quenched pressure is bounded from above by the annealed one, also
	\eqn{
	\expec\Big[\frac{1}{n}\log Z_G(q,w,B)\Big]\leq \frac{1}{n}\log \expec[Z_G(q,w,B)]\rightarrow \varphi_{\rm B}
	(w,B)= \varphi_{\rm B}^{\rm \sss(e)}(w,B),
	}
where the last inequality follows from \eqref{pressure-negative-B-c}. Then, we conclude that
	\eqn{
	\frac{1}{n}\log Z_G(q,w,B)\convp \varphi_{\rm B}(w,B),
	\qquad \frac{1}{n}\log \expec[Z_G(q,w,B)]\rightarrow \varphi_{\rm B}(w,B),
	}
as desired.
\smallskip

Let us close by discussing what we know about these assumptions in \eqref{pressure-negative-B-a}--\eqref{pressure-negative-B-c} for random regular graphs. We note that \eqref{pressure-negative-B-a} and \eqref{pressure-negative-B-b} follow for random regular graphs, since quenched and annealed Ising models have the same pressure per particle (see e.g., \cite{Can19}). This also gives an explicit expression for $\varphi_{\rm B}^{\rm \sss(e)}(w,B)$. 

The computation in \eqref{annealed-pressure-negative-B} is for an {\em annealed} system, and should thus be simpler than for the {\em quenched} system. See e.g., \cite[Equation (13)]{GalSteVigYan16} for an explicit expression for the annealed partition function for $B=0$, for Potts models. Further, one may hope that the relatively explicit expressions for $\varphi_{\rm B}(w,B)$ and $\varphi_{\rm B}^{\rm \sss(e)}(w,B)$ can be compared to prove \eqref{pressure-negative-B-c}.
}\hfill \ensymboldefinition
\end{remark}
\medskip

\paragraph{\bf The nature of the phase transition in random cluster measures.} It would be highly interesting to use our representation to study the critical nature of the extended Ising model on locally tree-like random graphs beyond the {\em regular} setting. It can be expected that the phase transition is second order when $q\in(1,2]$, while it becomes first-order when $q>2$. For $q>2$, one can expect the phase transition to persist even for $B>0$, as follows from our results combined with those of \cite{BasDemSly23}, as well as for the annealed case in \cite{GiaGibHofJanMai25}. In the latter paper, special attention was given to settings where the random graphs have {\em power-law} degrees. Remarkably, the phase transition may depend on the power-law exponent for $q>2$. Indeed, it was shown that the phase transition is second-order when the power-law exponent is below $\tau(q)$ for some $\tau(q)\in (3,4)$, while it is first-order when $\tau\geq \tau(q)$. Here, $\tau>2$ is characterised by the limiting proportion of vertices of degree at least $k$ being of the order $k^{-(\tau-1)}$ in the large-graph limit. Does this `smoothing' of the phase transition also occur in the quenched case?
\medskip

\paragraph{\bf Connected components in random cluster measures.}
One particularly interesting aspect could be to consider the connected component of the random cluster measure, and show that this has a phase transition. So far, most phase transition results for the Ising or Potts model have been proved for the {\em magnetisation} instead.
\smallskip

In more detail, denote $\mu_{\beta,q}(x,y)=\mu_{p,q}(\sigma_x=\sigma_y)-1/q$ for the two-point correlation function of the Potts model. Then, (cf.\ \cite[Theorem (1.16)]{Grim06}) for $p=1-\e^{-\beta}$ and $q\in {\mathbb N}$,
	\eqn{
	\label{tau-|R|C-Potts}
	\mu_{\beta,q}(x,y)=(1-1/q)\phi_{p,q}(x\conn y).
	}
This allows one to compute Potts quantities using the random cluster representation. In particular, the critical value $\beta_c(q)$ should be the value for which $\phi_{p,q}(x\conn y)$ will become positive uniformly in $x$ and $y$, or for random $x$ and $y$, i.e., there is a {\em giant component} in the corresponding random cluster measure. It would be highly interesting to explore these connections further.

\section{Rank-2 approximations to random cluster models}
\label{sec-quenched-random-cluster}
In this section, we rely on Bencs, Borb\'enyi and Csikv\'ary \cite{BenBorCsi23} to give a representation of general random cluster measures on random graphs with few cycles.

\subsection{Rank-2 approximations to random cluster models without external fields}
We rely on \cite[Theorem 2.4]{BenBorCsi23}, which we restate here. For this, we first define some additional notation. For $q>0$ and $w>0$, we let
	\eqn{
	\label{ZG-sum-A-def}
	Z_G(q,w)=\sum_{A\subseteq E(G)} q^{k(A)}w^{|A|},
	}
where we recall that $k(A)$ denotes the number of connected components of the graph $(V(G),A)$. To relate this to the random cluster model (or even Potts models), we have to take $w=\e^{\beta}-1$. Indeed, comparing to \eqref{RC-partition}, we see that
	\eqn{
	w=\frac{p}{1-p}=\e^{\beta}-1,
	}
since $p=1-\e^{-\beta}$, so that
	\eqn{
	\label{RC-partition-rew}
	Z_{n,\rm RC}(p,q)=\sum_{\omega} q^{k(\omega)} \prod_{e\in E(G)} p^{\omega(e)}(1-p)^{1-\omega(e)}=(1-p)^{|E(G)|}Z_G(q,w).
	}
Thus, obviously, to study the partition function of the random cluster model, it suffices to study $Z_G(q,w)$ in \eqref{ZG-sum-A-def}. 
In terms of this notation, \cite[Theorem 2.4]{BenBorCsi23} reads as follows:

\begin{theorem}[Rank-2 approximation of random cluster partition function \protect{\cite[Theorem 2.4]{BenBorCsi23}}]
	\label{thm-BenBorCsi23}
	Fix $q\geq 2$, and let $G$ be a finite graph. Then
	\eqn{
		Z_G(q,w)\geq Z_G^{\sss(2)}(q,w),
	}
where 
\eq{
Z_G^{\sss(2)}(q,w) =\sum_{S \subseteq V(G)} (q-1)^{|V(G) \setminus S|}  (1+w)^{|E(S)|} (1+w/(q-1))^{|E(V(G)\setminus S)|}.
}
	Let $G$ be a graph with $L=L(G,g)$ cycles of length at most $g-1$ and size $|V(G)|=n$. Then also
	\eqn{
		Z_G(q,w)\leq q^{n/g+L}Z_G^{\sss(2)}(q,w).
	}
\end{theorem}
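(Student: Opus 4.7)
The plan is to expand both sides as sums over edge subsets $A \subseteq E(G)$, factorise the comparison over the connected components of $(V(G), A)$, and then reduce to a structural bound on $G$. This follows the outline of Bencs, Borb\'enyi and Csikv\'ary.

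First, I would distribute the products in the definition of $Z_G^{(2)}(q,w)$ and group by the edge set $A = A_1 \sqcup A_2$ inside the two parts of a partition $(S, V(G) \setminus S)$, writing $S = \{v : \sigma_v = +\}$ in terms of a two-spin configuration $\bsigma \in \{+,-\}^{V(G)}$ that is constant on each component of $(V(G), A)$. The key observation is that, for fixed $A$, the inner sum over such $\bsigma$ factorises over the components $C$ of $(V(G), A)$: each $C$ may be entirely $+$ or entirely $-$, and this contributes
\eq{
1 + (q-1)^{|V(C)| - |E(C)|} = 1 + (q-1)^{1 - r_C},
}
where $r_C := |E(C)| - |V(C)| + 1$ is the cyclomatic number of $C$. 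On the other hand, $q^{k(A)} = \prod_C q$ in the expansion of $Z_G(q,w)$, so the theorem reduces to controlling the per-$A$ ratio $\prod_C q / \bigl(1 + (q-1)^{1 - r_C}\bigr)$.

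A direct computation shows each factor lies in $[1, q]$, with equality to $1$ precisely when $r_C = 0$ (a tree component) and bounded above by $q$ whenever $r_C \geq 1$ (a component carrying at least one cycle). The lower bound $Z_G(q,w) \geq Z_G^{(2)}(q,w)$ is then immediate, while the per-$A$ ratio is at most $q^{c(A)}$, where $c(A)$ is the number of components of $(V(G), A)$ containing a cycle; this yields $Z_G(q,w) \leq q^{\kL(G)} Z_G^{(2)}(q,w)$ in the notation of \eqref{mdcy}.

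The main technical step is then the structural bound $\kL(G) \leq n/g + L$. For any edge set $A$, each component $C$ of $(V(G), A)$ containing a cycle either contains a cycle of $G$ of length at most $g-1$, or all of its cycles have length at least $g$. In the first case, since distinct components of $(V(G), A)$ are vertex-disjoint, they contain distinct short cycles of $G$, of which there are at most $L$ in total. In the second case $C$ must contain at least $g$ vertices, and vertex-disjointness again limits the number of such components to at most $n/g$. Summing the two estimates yields the claimed bound. The conceptually delicate point, which makes the factor $q^{\kL(G)}$ (rather than the much weaker $(q-1)^{r(A)}$ one might naively expect from the cyclomatic number of $A$) appear, is the per-component product structure in the opening step; the purely counting step at the end is then elementary.
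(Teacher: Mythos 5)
Your proof is correct, but it is not the route the paper takes: the paper defers the proof of this statement entirely to \cite{BenBorCsi23}, and its own proofs of the closely parallel external-field bounds (Lemmas \ref{zqrec}, \ref{lowerzq} and \ref{upperzq}) are organised differently. There, the lower bound is obtained by first reducing $q$ to $q-1$ (peeling off the set $S$ of ``first-colour'' vertices) and then applying the rank-1 bound $Z_{G'}(q-1,w)\geq (q-1)^{|V(G')|}(1+w/(q-1))^{|E(G')|}$ to the remainder, while the upper bound starts from $q^{k(A)}\leq q^{\kL(G)}q^{|\mathcal{S}_A|}$, expands $q^{|\mathcal{S}_A|}=\sum_{R\sim A}(q-1)^{k(R,A\llbracket R\rrbracket)}$ over unions $R$ of \emph{tree} components only, and finally relaxes the constraint that $A\llbracket R\rrbracket$ be a forest. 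You instead establish the single exact identity $Z_G^{\sss(2)}(q,w)=\sum_{A\subseteq E(G)}w^{|A|}\prod_{C\in\mathscr{C}(A)}\bigl(1+(q-1)^{1-r_C}\bigr)$ and compare it termwise with $Z_G(q,w)=\sum_A w^{|A|}q^{k(A)}$ via the per-component ratio $q/(1+(q-1)^{1-r_C})$, which lies in $[1,q]$ and equals $1$ exactly on tree components; this yields both bounds simultaneously (and shows, as a bonus, that equality holds throughout when $q=2$), and your counting argument for $\kL(G)\leq n/g+L$ (distinct vertex-disjoint components contain distinct short cycles, of which there are $L$; a cyclic component with no short cycle has at least $g$ vertices) is the same elementary step the paper uses in its proof of Theorem \ref{thm-B>0}(ii). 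What the paper's less symmetric decomposition buys is that it adapts directly to the external-field component weights $1+(q-1)\e^{-B|C|}$, where the clean cyclomatic-number formula is not immediately available in the same form; for the field-free statement at hand your argument is complete and arguably tidier.
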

\smallskip

We refer to \cite{BenBorCsi23} for its proof.

\subsection{Rank-2 lower bound for random cluster models with external fields}
We next adapt the proof of Theorem \ref{thm-BenBorCsi23} to $B>0$, in anticipation of proving Theorem \ref{thm-B>0}. Many parts of this analysis are actually closely related, and we spell out some of the ingredients of the proof.
\medskip

Our  aim is to compare $Z_G(q,w,B)$ with $Z^{\sss (2)}_G(q,w,B)$, where we recall that
	\eq{
 	Z^{\sss (2)}_G(q,w,B)
	= \sum_{S\subseteq V(G)}(1+w)^{|E(S)|}\left(1+\frac{w}{q-1}\right)^{|E(V(G)\setminus S)|} ((q-1)\e^{-B})^{|V(G)|-|S|}.
	}
In the following lemma, we reduce $q$, and at the same time isolate the dependence on $B$:
\begin{lemma}[Reduction of $q$ and dependence on $B$] 
\label{zqrec}
For all $q$ and $B\in \mathbb{R}$,
	\eq{
Z_G(q,w,B) = \sum_{S \subseteq V(G)} \e^{B(|S|-|V(G)|)} (1+w)^{|E(S)|} Z_{G\setminus S} (q-1,w).	
}
\end{lemma}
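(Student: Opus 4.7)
The plan is to start from the definition of $Z_G(q,w,B)$ in \eqref{partition-function-B} and expand the product of cluster weights by making a binary choice for each connected component, then interchange the order of summation so that the choice becomes a choice of a subset $S$ of $V(G)$.

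\textbf{Step 1: Expand the cluster factors.} For each $\bomega$, every cluster factor can be written as
\[
1+(q-1)\e^{-B|C|}=\sum_{\varepsilon_C\in\{0,1\}}\bigl((q-1)\e^{-B|C|}\bigr)^{\varepsilon_C}.
\]
Substituting this into \eqref{partition-function-B} and distributing,
\[
Z_G(q,w,B)=\sum_{\bomega}\prod_{e\in E(G)}w^{\omega_e}\sum_{T\subseteq\mathscr{C}(\bomega)}\prod_{C\in T}(q-1)\e^{-B|C|},
\]
where $T$ records the clusters for which we pick the second term.

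\textbf{Step 2: Encode $T$ via a vertex subset.} Given $(\bomega,T)$, define $S:=V(G)\setminus\bigcup_{C\in T}C$, the union of those clusters \emph{not} selected by $T$. Then $S$ is a union of clusters of $\bomega$, and equivalently, no edge with $\omega_e=1$ connects $S$ to $V(G)\setminus S$. Conversely, any $S\subseteq V(G)$ together with an $\bomega$ having no active edges across $S$ determines $T$ uniquely as the set of clusters contained in $V(G)\setminus S$. Interchanging the order of summation yields
\[
Z_G(q,w,B)=\sum_{S\subseteq V(G)}\;\sum_{\substack{\bomega:\\ \omega_e=0\text{ for all }\\ e\text{ across }S}}\prod_{e\in E(G)}w^{\omega_e}\,(q-1)^{k(\bomega|_{V(G)\setminus S})}\e^{-B|V(G)\setminus S|},
\]
where $k(\bomega|_{V(G)\setminus S})$ denotes the number of connected components of $(V(G)\setminus S, E_p(G)\cap E(V(G)\setminus S))$.

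\textbf{Step 3: Factorise over $E(S)$ and $E(V(G)\setminus S)$.} Because the inner sum forbids active edges across $S$, the edge configuration splits into an independent part on $E(S)$ and an independent part on $E(V(G)\setminus S)$. The $E(S)$ sum is
\[
\sum_{\bomega_S\subseteq E(S)}w^{|\bomega_S|}=(1+w)^{|E(S)|},
\]
while the $E(V(G)\setminus S)$ sum is, by \eqref{ZG-sum-A-def},
\[
\sum_{A\subseteq E(V(G)\setminus S)}(q-1)^{k(A)}w^{|A|}=Z_{G\setminus S}(q-1,w).
\]
Combining these with the factor $\e^{-B|V(G)\setminus S|}=\e^{B(|S|-|V(G)|)}$ gives the claimed identity.

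\textbf{Anticipated obstacles.} The computation is essentially bookkeeping, and the only mild subtlety is Step~2, where one must be careful that the bijection between pairs $(\bomega,T)$ and pairs $(S,\bomega)$ with no active edges crossing $S$ produces exactly the exponent $k(\bomega|_{V(G)\setminus S})$ for $(q-1)$ rather than $|V(G)\setminus S|$. Once this interchange is justified, factorisation in Step~3 is immediate.
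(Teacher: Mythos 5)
Your proof is correct, but it takes a genuinely different route from the paper. The paper proves the identity by passing through the Potts model: it invokes Lemma \ref{pott=rc} to write $Z_G(q,w,B)$ as $\e^{-Bn}Z_G^{\sss\rm Potts}(q,\beta,B)$ for integer $q$, represents the Potts partition function as a sum over $q$-partitions $(S,S_1,\ldots,S_{q-1})$ of $V(G)$, peels off the first block $S$ to recognise $Z_{G\setminus S}^{\sss\rm Potts}(q-1,\beta,0)$, and finally extends from integer $q$ to all real $q$ by observing that both sides are polynomials in $q$. You instead expand each cluster factor $1+(q-1)\e^{-B|C|}$ binomially, reindex the selected clusters by the vertex set $S$ they leave uncovered (equivalently, the constraint that no active edge crosses $\partial S$), and factorise the remaining edge sum over $E(S)$ and $E(V(G)\setminus S)$; the key bookkeeping point you flag --- that the exponent of $(q-1)$ is the number of components of the restriction to $V(G)\setminus S$, not $|V(G)\setminus S|$ --- is exactly right, and it is what produces $Z_{G\setminus S}(q-1,w)$ rather than a rank-1 factor. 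Your argument is self-contained and works directly for all real $q$ and all $B$, with no need for the polynomial-interpolation step; the paper's version is shorter on the page because it reuses the already-established Potts--random-cluster correspondence, but it is less direct. Either proof is acceptable.
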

\begin{proof} This proof is an adaptation of \cite[Proof of Lemma 2.2]{BenBorCsi23}. Recall the Potts model and Lemma \ref{pott=rc} saying that with $\beta = \log (1+w)$, if $q$ is an integer larger than $2$,
	\eqn{
	Z_G^{\sss \rm Potts}(q,\beta,B) = \e^{Bn} Z_G(q,w,B).	
	}
Note that the partition function of the Potts model can be represented as 
	\eqan{
	Z_G^{\sss \rm Potts}(\beta,B) &= \sum_{(S, S_1, \ldots, S_{q-1}) \in \mathcal{P}_q(V(G))} \e^{B|S|} (1+w)^{|E(S)|} \prod_{i=1}^{q-1} (1+w)^{|E(S_i)|},\nn
}	
where $\mathcal{P}_k(U)$ is the set of all $k$-partitions of a set $U$. Thus,
	\eqan{
	Z_G^{\sss \rm Potts}(q,\beta,B)
	&= \sum_{S \subseteq V(G)} \e^{B|S|} (1+w)^{|E(S)|}  \sum_{( S_1, \ldots, S_{q-1}) \in \mathcal{P}_{q-1}(V(G)\setminus S)}  \prod_{i=1}^{q-1} (1+w)^{|E(S_i)|}\\
	&= \sum_{S \subseteq V(G)} \e^{B|S|} (1+w)^{|E(S)|} Z_{G\setminus S}^{\sss \rm Potts}(q-1,\beta,0),\nn
	}	
Combining the last two display equations, we get for all integers $q \geq 2$, 
	\eq{
	Z_G(q,w,B) = \sum_{S \subseteq V(G)} \e^{B(|S|-|V(G)|)} (1+w)^{|E(S)|} Z_{G\setminus S} (q-1,w).	
	}
Moreover, for a finite graph $G$, both expressions in the above equation are polynomials in $q$. Therefore, the equation holds for all $q$. 
\end{proof}
The next lemma provides a rank-2 lower bound on the partition function:

\begin{lemma}[Rank-2 lower bound] 
\label{lowerzq}
For all $q \geq 2$ and $B \geq 0$,
	\eqn{
	Z_G(q,w,B) \geq Z^{\sss (2)}_G(q,w,B).
	}
\end{lemma}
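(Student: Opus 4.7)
The plan is to feed the identity from Lemma~\ref{zqrec} into the definition of $Z_G^{\sss(2)}(q,w,B)$ and reduce the claim to a term-by-term comparison of polynomials in $q-1$ and $w$ over a single graph, then close by a classical cycle-rank inequality.

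First, I would rewrite the target expression $Z_G^{\sss(2)}(q,w,B)$ so that it matches the form produced by Lemma~\ref{zqrec}. Using $((q-1)\e^{-B})^{|V(G)|-|S|}=(q-1)^{|V(G)\setminus S|}\,\e^{B(|S|-|V(G)|)}$, the definition \eqref{Z-2-form} becomes
\eq{
Z_G^{\sss(2)}(q,w,B) = \sum_{S\subseteq V(G)} \e^{B(|S|-|V(G)|)}(1+w)^{|E(S)|}\,(q-1)^{|V(G)\setminus S|}\Bigl(1+\tfrac{w}{q-1}\Bigr)^{|E(V(G)\setminus S)|}.
}
On the other hand, Lemma~\ref{zqrec} gives the companion expansion
\eq{
Z_G(q,w,B) = \sum_{S\subseteq V(G)} \e^{B(|S|-|V(G)|)}(1+w)^{|E(S)|}\,Z_{G\setminus S}(q-1,w).
}
Since $B\geq 0$ and $w\geq 0$, every prefactor $\e^{B(|S|-|V(G)|)}(1+w)^{|E(S)|}$ is non-negative, so it is enough to prove the pointwise inequality
\eqn{\label{plan:key}
Z_H(q-1,w) \;\geq\; (q-1)^{|V(H)|}\Bigl(1+\tfrac{w}{q-1}\Bigr)^{|E(H)|}
}
for the induced subgraph $H=G\setminus S$ (equivalently, for every finite graph $H$), whenever $q\geq 2$ and $w\geq 0$.

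For the second step I would open up the definition $Z_H(q-1,w)=\sum_{A\subseteq E(H)}(q-1)^{k(A)}w^{|A|}$ and use the elementary bound
\eq{
k(A) \;\geq\; |V(H)|-|A|,
}
valid for any $A\subseteq E(H)$ because each added edge decreases the number of components by at most one, starting from $|V(H)|$ isolated vertices (equality holds precisely for spanning forests). Because $q-1\geq 1$, exponentiating preserves the inequality, and I can therefore estimate
\eqa{
Z_H(q-1,w) \;&\geq\; \sum_{A\subseteq E(H)}(q-1)^{|V(H)|-|A|}w^{|A|}\\
&= (q-1)^{|V(H)|}\sum_{A\subseteq E(H)}\Bigl(\tfrac{w}{q-1}\Bigr)^{|A|}\\
&= (q-1)^{|V(H)|}\Bigl(1+\tfrac{w}{q-1}\Bigr)^{|E(H)|},
}
which is exactly \eqref{plan:key}. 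Substituting back into the expansion of $Z_G(q,w,B)$ term by term yields $Z_G(q,w,B)\geq Z_G^{\sss(2)}(q,w,B)$.

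No step here looks like a serious obstacle: Lemma~\ref{zqrec} does all of the work of separating out the $B$-dependence and reducing to the standard zero-field polynomial $Z_H(q-1,w)$, and the inequality \eqref{plan:key} is precisely where the hypothesis $q\geq 2$ is used (to ensure $q-1\geq 1$, so that the cycle-rank bound $k(A)\geq |V(H)|-|A|$ can be used with exponents of $q-1$ in the correct direction). The only point that deserves a brief sanity check is the non-negativity of the prefactors $\e^{B(|S|-|V(G)|)}(1+w)^{|E(S)|}$, guaranteed by $B\geq 0$ and $w\geq 0$, which is what allows the reduction to the term-by-term inequality rather than having to cope with sign cancellations as will happen for general $B$.
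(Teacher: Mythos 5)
Your proposal is correct and follows essentially the same route as the paper: apply Lemma~\ref{zqrec} and then bound each $Z_{G\setminus S}(q-1,w)$ from below by $(q-1)^{|V(G)\setminus S|}(1+w/(q-1))^{|E(V(G)\setminus S)|}$ via the elementary estimate $k(A)\geq |V(H)|-|A|$ (which is exactly the rank-1 lower bound \eqref{rank-1-lower-bound} the paper quotes from \cite[Lemma 2.1]{BenBorCsi23} and re-derives inline). The only cosmetic difference is that you phrase the conclusion as a term-by-term comparison after rewriting $Z^{\sss(2)}_G$; note also that the prefactors $\e^{B(|S|-|V(G)|)}(1+w)^{|E(S)|}$ are positive for \emph{any} real $B$, so that step needs no hypothesis on $B$ at all.
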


\begin{proof} By \cite[Lemma 2.1]{BenBorCsi23}, for $q\geq 1$,
	\eqn{
	\label{rank-1-lower-bound}
	Z_G(q,w)  \geq  q^{|V(G)|} \left(1+ w/q \right)^{|E(G)|}.
	}
This can be seen by noting that, since $k(A)\geq |V(G)|-|A|$ for any collection of edges $A$,
	\eqn{
	\label{ZG-sum-A-LB}
	Z_G(q,w)=\sum_{A\subseteq E(G)} q^{k(A)}w^{|A|}\geq \sum_{A\subseteq E(G)} q^{|V(G)|-|A|}w^{|A|}=q^{|V(G)|} \left(1+ w/q \right)^{|E(G)|}.
	}
\smallskip

Using \eqref{rank-1-lower-bound} and  Lemma \ref{zqrec}, now for $q-1\geq 1$, so that $q\geq 2$,
	\eqa{
	Z_G(q,w,B) &= \sum_{S \subseteq V(G)} \e^{B(|S|-|V(G)|)} (1+w)^{|E(S)|} Z_{G\setminus S} (q-1,w)\\
	&\geq \sum_{S \subseteq V(G)} \e^{B(|S|-|V(G)|)} (1+w)^{|E(S)|} (q-1)^{|V(G)|-|S|}(1+w/(q-1))^{|E(V(G)\setminus S)|}\\
	&= 	Z^{\sss (2)}_G(q,w,B).
}
\end{proof}
%Lemma \ref{lowerzq} proves the rank-2 lower bound for all $q\geq 2$ and $B\in \mathbb{R}$. 

\subsection{Rank-2 upper bound for random cluster models with external fields}
We next proceed with the upper bound, which is only valid for $B\geq 0$:
	
	\begin{lemma}[Rank-2 upper bound]  
	\label{upperzq}
	For all $q \geq 2$ and $B \geq 0$,
		\eq{
		Z_G(q,w,B) \leq q^{\kL(G)}	Z^{\sss (2)}_G(q,w,B),	
	}
			where 
			\eq{ 
				\kL(G)= \max_{A \subseteq E} |\{ \textrm{\rm connected components of $G_A=(V(G),A)$ containing a cycle}\}|.
			}
	\end{lemma}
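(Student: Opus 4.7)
The plan is to rewrite both $Z_G(q,w,B)$ and $Z^{(2)}_G(q,w,B)$ as sums over edge subsets $A \subseteq E(G)$ of $w^{|A|}$ times a product over the connected components of $(V(G),A)$, and then compare them componentwise. The key advantage of this approach is that the two partition functions then differ only through the per-component weight, and the discrepancy turns out to be trivial on tree components and at most $q$ on cyclic components, which is exactly what the statement requires.

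On the $Z_G$ side the definition in \eqref{rcb} already gives the right form,
\eqn{
\label{ZG-expand}
Z_G(q,w,B) = \sum_{A \subseteq E(G)} w^{|A|} \prod_{C} \Big(1 + (q-1)e^{-B|V(C)|}\Big),
}
where the product ranges over the connected components $C$ of $(V(G),A)$. The main technical step of the proof will then be to derive the analogous identity
\eqn{
\label{target-Z2-expansion}
Z^{(2)}_G(q,w,B) = \sum_{A \subseteq E(G)} w^{|A|} \prod_{C} \Big(1 + (q-1)^{1-b(C)}e^{-B|V(C)|}\Big),
}
where $b(C) = |E(C)|-|V(C)|+1$ is the cyclomatic number of $C$. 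I would obtain this by expanding the two binomials in \eqref{Z-2-form} as $(1+w)^{|E(S)|} = \sum_{A_1 \subseteq E(S)} w^{|A_1|}$ and $(1+w/(q-1))^{|E(V(G)\setminus S)|} = \sum_{A_2 \subseteq E(V(G)\setminus S)} (w/(q-1))^{|A_2|}$, interchanging the sums so that $A := A_1 \cup A_2$ is summed first, and observing that the constraints force the admissible $S$ to correspond bijectively to assignments of each connected component of $(V(G),A)$ to either $S$ or $V(G)\setminus S$. The identity $|V(G)\setminus S|-|A_2| = \sum_{C \subseteq V(G)\setminus S}(1-b(C))$ then makes the resulting inner sum over $S$ factor as a product over components, giving \eqref{target-Z2-expansion}.

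With \eqref{target-Z2-expansion} in hand, the proof concludes by a termwise comparison of \eqref{ZG-expand} and \eqref{target-Z2-expansion}. A tree component ($b(C)=0$) contributes the same weight on both sides, so its ratio is $1$. A cyclic component ($b(C) \geq 1$) contributes a weight at most $q$ on the $Z_G$ side, because $B \geq 0$ and $q \geq 2$ give $1+(q-1)e^{-B|V(C)|} \leq q$, and at least $1$ on the $Z^{(2)}_G$ side, because $(q-1)^{1-b(C)}e^{-B|V(C)|} \geq 0$. Hence for each fixed $A$ the ratio of products is at most $q^{\kL(A)}$, where $\kL(A)$ denotes the number of cyclic components of $(V(G),A)$, and by the very definition of $\kL(G)$ we have $\kL(A) \leq \kL(G)$ uniformly in $A$. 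Since the rank-$2$ weight $w^{|A|}\prod_C(1+(q-1)^{1-b(C)}e^{-B|V(C)|})$ is non-negative, summing the bound over $A$ yields the desired inequality $Z_G(q,w,B) \leq q^{\kL(G)} Z^{(2)}_G(q,w,B)$.

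The main obstacle is the algebraic rearrangement that produces \eqref{target-Z2-expansion}: the parameterisation of the triples $(S,A_1,A_2)$ by $A$ together with a bicolouring of its components, and the identification of $|V(G)\setminus S|-|A_2|$ with $\sum_{C \subseteq V(G)\setminus S}(1-b(C))$, is where all the combinatorial content sits. Once this identity is established, the remaining estimate reduces to the elementary per-component inequality $1+(q-1)e^{-B|V(C)|} \leq q \bigl(1+(q-1)^{1-b(C)}e^{-B|V(C)|}\bigr)$, which uses only $B \geq 0$ and $q \geq 2$, together with the defining extremal property of $\kL(G)$.
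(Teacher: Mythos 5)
Your proof is correct, but it is organised differently from the paper's. The paper proves the upper bound by a forward chain of inequalities starting from the expansion of $Z_G(q,w,B)$ over edge subsets $A$: it first replaces each cyclic-component factor $1+(q-1)\e^{-B|C|}$ by $q$ (using $B\geq 0$), then expands the remaining product over tree components as a sum over compatible vertex sets $R$ (unions of tree components), interchanges the sums over $A$ and $R$, and finally bounds the inner sum over forests $D\subseteq E(R)$ via $k(R,D)=|R|-|D|$ and $\sum_D (w/(q-1))^{|D|}\leq (1+w/(q-1))^{|E(R)|}$, the last step dropping the forest constraint. You instead establish the \emph{exact} component expansion $Z^{\sss(2)}_G(q,w,B)=\sum_{A}w^{|A|}\prod_C\big(1+(q-1)^{1-b(C)}\e^{-B|V(C)|}\big)$, with $b(C)$ the cyclomatic number, and then compare the two expansions termwise: tree components contribute identical factors, while cyclic components contribute at most $q$ on the $Z_G$ side and at least $1$ on the $Z^{\sss(2)}_G$ side. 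The two arguments handle the same combinatorial discrepancy (non-forest edge configurations inside the ``minus'' part) in dual ways -- the paper by relaxing a constrained sum to an unconstrained one, you by tracking the exact deficit $(q-1)^{1-b(C)}$ and noting it is still non-negative. Your route costs a careful verification of the bijection between triples $(S,A_1,A_2)$ and pairs (edge set $A$, bicolouring of components), which I checked and is sound, but it buys a cleaner structural identity for $Z^{\sss(2)}_G$ that the paper never states, and it makes completely transparent that exactly one factor of $q$ is incurred per cyclic component, whence $q^{\kL(G)}$ by the definition of $\kL(G)$. Both arguments use the hypotheses $B\geq 0$ and $q\geq 2$ in the same single place, namely the bound $1+(q-1)\e^{-B|C|}\leq q$ on cyclic components.
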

	
If we compare the upper bound in Lemma \ref{upperzq} to the one in Theorem \ref{thm-BenBorCsi23}, we see that the power of $q$ is slightly different. It turns out that the bound in Lemma \ref{upperzq} allow us to also investigate the annealed setting (for which we have to do a large deviation type estimate on the number of components with cycles), whereas the bound in Theorem \ref{thm-BenBorCsi23} is not strong enough for that. We refer to Section \ref{sec-Potts} for details.
	
	\begin{proof}  Let  $A=\{e \in E(G)\colon \omega_e=1 \}\subseteq E(G)$, let $V_1,\dots ,V_r$ be the vertex sets of the connected components of the graph $G_A=(V(G),A)$, and let $A_1,\dots ,A_r$ be the corresponding subsets of $A$. If $V_i$ is an isolated vertex, then, by convention, $A_i=\emptyset$ is the empty set. We define 
		\[
		\mathcal{S}_A =\{i\colon V_i \textrm{ is a tree} \}, \qquad \mathcal{L}_A =\{i\colon  V_i \textrm{ contains a cycle}\}.
		\]
		Since $|\kL_A| \leq \kL(G)$,
		\eqan{
		\label{first-bound-rank-2}
		Z_G(q,w,B)&=\sum_{A \subseteq E(G)} w^{|A|} 
	\prod_{C\in\mathscr{C}(A)}
	(1+(q-1)\e^{-B|C|})\\
	& \leq q^{\kL(G)} \sum_{A \subseteq E(G)} w^{|A|} 
	\prod_{i\in \mathcal{S}_A }
	(1+(q-1)\e^{-B|V_i|}).\nn
	}
		We say that a vertex set $R$ is {\em compatible} with $A$ if $R =\cup_{i \in I} V_i$ with $I \subseteq \kS_A$ is the union of some tree components of $A$. Note that $R$ may be the empty set.  We denote this relation by $R\sim A$. Furthermore, let $A\llbracket R\rrbracket$ be the edges of $A$ induced by the vertex set $R$. Note that if $R\sim A$, then $A\llbracket R\rrbracket$ is a forest. On the other hand, there is no restriction on $A\llbracket V(G)\setminus R\rrbracket.$
		\smallskip
		
Let $k(R,A\llbracket R \rrbracket)$ be the number of connected components of  $(R,A\llbracket R \rrbracket)$. Since $\mathcal{S}_A$ is the set of  tree components in $(V(G), A)$,
		\eqan{
		\label{second-bound-rank-2}
		\prod_{i \in \mathcal{S}_A }
		(1+(q-1)\e^{-B|V_i|}) &= \sum_{I \subseteq \kS_A} (q-1)^{|I|} \e^{-B \sum_{i \in I} |V_i|}\\
		&=\sum_{R\sim A}(q-1)^{k(R,A\llbracket R \rrbracket)} \e^{-B|R|}.\nn
		}
Therefore,
		\begin{align}
		\label{third-bound-rank-2}
			Z_G(q,w,B)&\leq q^{\kL(G)}\sum_{A \subseteq E(G)}\sum_{R: R\sim A}(q-1)^{k(R,A\llbracket R\rrbracket)}w^{|A|} \e^{-B|R|}\\
			&=q^{\kL(G)}\sum_{R \subseteq V(G)} \e^{-B|R|}\sum_{A\colon R\sim A} (q-1)^{k(R,A\llbracket R\rrbracket)}w^{|A\llbracket R\rrbracket|}\times w^{|A\llbracket V(G)\setminus R\rrbracket|}\nn\\
			&=q^{\kL(G)}\sum_{R \subseteq V(G)} \e^{-B|R|} (1+w)^{|E(V\setminus R)|}\sum_{D}(q-1)^{k(R,D)}w^{|D|},\nn
		\end{align}
		where, in the last sum, $D=A\llbracket R\rrbracket$ is a subset of the edges on the subgraph $(R,E(R))$ that is such that $(R,D)$ is a forest, and $k(R,D)$ the number of connected components of the graph on $R$ with edges given by $D$.  Then
		\eqan{
		\label{fourth-bound-rank-2}
			\sum_{D}(q-1)^{k(R,D)}w^{|D|}&=\sum_{D}(q-1)^{|R|-|D|}w^{|D|}= (q-1)^{|R|} \sum_{D} (w/(q-1))^{|D|}\\
			&\leq (q-1)^{|R|} (1+w/(q-1))^{|E(R)|}.\nn
		}
		Combining the last two estimates, and setting $S=V(G)\setminus R$, we obtain the desired result.
	\end{proof}
Now we are ready to complete the proof of Theorem \ref{thm-B>0}:

\begin{proof}[Proof of Theorem \ref{thm-B>0}] The proof of Theorem \ref{thm-B>0}(i), or the comparison of the partition functions $Z_G(q,w,B) $ and $ Z^{\sss (2)}_G(q,w,B)$, follows from Lemmas \ref{lowerzq} and \ref{upperzq}.  For (ii) or the estimate of $\kL(G)$,  we say that a connected component of $(V(G),A)$ is  {\em large} when it contains a cycle of length at least $k$, and small  otherwise. It is clear that there are at most $|V(G)|/k$ large cyclic components. On the other hand, the number of small cyclic components  is smaller than the maximal number of disjoint cycles of length at most $k-1$, and thus is smaller than $\sum_{i=2}^{k-1}\kL_i(G)$. 
\end{proof}

\invisibleRvdH{\subsection{Alternative rank-2 approximations of random cluster model}
In this section, we state and prove an alternative rank-2 approximation to the random cluster model that is also interesting in its own right:

\begin{theorem}[Alternative rank-2 approximation of random cluster partition function with external field]
\label{thm-B>0-alt}
Let $G=(V,E)$ be a finite graph. Then 
	\eqn{
		Z^{\sss (2)}_G(q,w,B)\leq Z_G(q,w,B)\leq q^{\kL(G)+1}Z^{\sss (2)}_G(q,w,B),
	}
where 
	\eqn{ \label{mdcy}
                \kL(G)= \max_{A \subseteq E} |\{ \text{\rm connected components of $G_A=(V,A)$ containing a cycle}\}|,
        }
and
	\eqan{
		\label{Z-2-form}
		Z^{\sss (2)}_G(q,w,B)
		&= \frac{1}{q \e^{B|V|}}\sum_{S\subseteq V(G)}(1+w)^{|E(S)|}\left(1+\frac{w}{q-1}\right)^{|E(V\setminus S)|}\nonumber\\
		&\qquad \times [\e^{B|S|}(q-1)^{|V|-|S|}+(q-1)(q+\e^B-2)^{|V|-|S|}  ].
	}
\end{theorem}

We next prove Theorem \ref{thm-B>0-alt}, for which we first investigate random cluster models with non-constant weights, followed by random cluster models on augmented graphs.

\subsubsection{Extension of rank-2 approximations with varying edge-weights}
\label{sec-edge-weights-rank-2}
We next generalise Theorem \ref{thm-BenBorCsi23} to settings where the weights depend on the edges. This result will be crucial to be able to deal with Potts models with non-zero external fields. Let $G=(V,E)$ be a finite graph and $q>1$ be a positive real number and $\bw=(w_e)_{e \in E}$ be a sequence of positive weights.  The weighted random cluster model is given as 
	\eqn{
	\mu^{\bw}_G(\bomega) = \frac{W^{\bw}(\bomega)}{Z_G(q,\bw)}, \qquad \text{where} \qquad W^{\bw}(\bomega) = q^{k(\bomega)} \prod_{e \in E} w_e^{\omega_e},
	}
and where $k(\bomega)$ denotes the number of connected components of the graph $(V,\{e: \omega_e=1\})$. 
\medskip

The following result is adapted from \cite[Theorem 2.4]{BenBorCsi23}:

\begin{theorem}[Rank-2 approximation for unequal edge weights]
\label{propzlg} 
Let $G=(V,E)$ be a finite graph. 
\begin{itemize}
\item  [(i)] For all non-negative edge weights,
        \eqn{
        Z^{\sss (2)}_G(q,\bw)\leq Z_G(q,\bw)\leq q^{\kL(G)}Z^{\sss (2)}_G(q,\bw),
        }
        where 
        \eqn{ \label{mdcy}
                \kL(G)= \max_{A \subseteq E} |\{ \textrm{connected components of $G_A=(V,A)$ containing a cycle}\}|,
        }
        and 
        \eqn{
        \label{Z2-unequal-weights}
        Z^{\sss (2)}_G(q,\bw)=\sum_{S\subseteq V(G)}(q-1)^{|V(G)|-|S|} \prod_{e \in E(S)}(1+w_e) \prod_{e \in E(V\setminus S)}\left(1+\frac{w_e}{q-1}\right).
        }
\item [(ii)] For any $k \geq 3$, 
	\eq{
        \kL(G) \leq \frac{|V|}{k} + \sum_{i=2}^{k-1} \kL_i(G),
        }
where 
        \[
        \kL_i(G) = \max\{l: G \textrm{ contains $l$ vertex-disjoint cycles of length $i$} \}
        \]
    \end{itemize}
\end{theorem}

\begin{proof} In  \cite[Theorem 2.4]{BenBorCsi23}, the authors assume that $w_e=w$ for all $e \in E$. The proof for general weights $\bw$ is essentially the same. For the sake of completeness, we reproduce the proof here. Let  $A=\{e \in E(G)\colon \omega_e=1 \}\subseteq E(G)$, let $V_1,\dots ,V_r$ be the vertex sets of the connected components of the graph $G_A=(V,A)$, and let $A_1,\dots ,A_r$ be the corresponding subsets of $A$. If $V_i$ is an isolated vertex, then, by convention, $A_i=\emptyset$. We define 
    \[
    \mathcal{S}_A =\{V_i\colon V_i \textrm{ is a tree} \}, \qquad \mathcal{L}_A =\{V_i: V_i \textrm{ contains a cycle}\}.
    \]
Note that, by the definition of $k(\bomega)$ and $\kL$,
   \eqn{
   \label{k-tree-relation}
   k(\bomega)=|\mathcal{S}_A| + |\mathcal{L}_A| \leq |\mathcal{S}_A| + \kL(G).
   }
\smallskip

For any $A \subseteq E$, we let $w_A = \prod_{e \in A} w_e$ denote the product of the weights in $A$. Then
	\begin{align*}
	Z_G(q,\bw)&=\sum_{\bomega\in \{0,1\}^{E(G)}} q^{k(\bomega)}=\prod_{e \in E} w_e^{\omega_e}\sum_{A \subseteq E(G)}q^{k(A)}w_{A}\\
	&\leq q^{\kL(G)}\sum_{A \subseteq E(G)}q^{|\mathcal{S}_A|}w_{A},
	\end{align*}
where the last step uses \eqref{k-tree-relation}. 
\smallskip

We say that a vertex set $R$ is {\em compatible} with $A$ if $R$ is the union of some  $V_i$ belonging to $\mathcal{S}_A$. Note that $R$ may be the empty set.  We denote this relation by $R\sim A$. Furthermore, let $A\llbracket R\rrbracket$ be the edges of $A$ induced by the vertex set $R$. Note that if $R\sim A$, then $A\llbracket R\rrbracket$ is a forest. On the other hand, there is no restriction on $A\llbracket V\setminus R\rrbracket.$
\smallskip

Let $k(R,A\llbracket R \rrbracket)$ be the number of connected components of  $(R,A\llbracket R \rrbracket)$. Since $|\mathcal{S}_A|$ is the number of tree components in $A$,
	\eqn{
	q^{|\mathcal{S}_A|}=((q-1)+1)^{|\mathcal{S}_A|}=\sum_{R\sim A}(q-1)^{k(R,A\llbracket R \rrbracket)}.
	}
Therefore,
	\begin{align*}
	Z_G(q,\bw)&\leq q^{\kL(G)}\sum_{A \subseteq E(G)}\sum_{R: R\sim A}(q-1)^{k(R,A\llbracket R\rrbracket)}w_{A}\\
	&=q^{\kL(G)}\sum_{R \subseteq V(G)}\sum_{A\colon R\sim A} (q-1)^{k(R,A\llbracket R\rrbracket)}w_{A\llbracket R\rrbracket}\times w_{A\llbracket V\setminus R\rrbracket}\\
	&=q^{\kL(G)}\sum_{R \subseteq V(G)}\prod_{e \in E(V(G)\setminus R)} (1+w_e)\sum_{D}(q-1)^{k(R,D)}w_{D},
	\end{align*}
where, in the last sum, $D$ (which before corresponded to $A\llbracket R\rrbracket$) is a subset of the edges on the subgraph $(R,E(R))$ that is such that $(R,D)$ is a forest, and $k(R,D)$ the number of connected components of the graph on $R$ with edges given by $D$.  Then, since $(R,D)$ is a forest, the number of connected components equal the number of vertices minus the number of edges, i.e.,
	\eqn{
	k(R,D)=|R|-|D|.
	}
Therefore,
	\eqa{
	\sum_{D}(q-1)^{k(R,D)}w_{D}&=\sum_{D}(q-1)^{|R|-|D|}w_{D} \\
	&= (q-1)^{|R|} \sum_{D} \prod_{e \in D} \frac{w_e}{q-1}  \leq (q-1)^{|R|} \prod_{e \in E(R)}\left(1+\frac{w_e}{q-1}\right).
	}
Combining the last two estimates, and setting $S=V(G)\setminus R$, so that also $R=V(G)\setminus S$, we obtain
	\begin{align*}
	Z_G(q,\bw)&\leq q^{\kL(G)}  \sum_{S\subseteq V(G)} (q-1)^{|V(G)\setminus S|}  \prod_{e \in E(S)} (1+w_e)
	\prod_{e \in E(V(G)\setminus S)}\left(1+\frac{w_e}{q-1}\right),
	\end{align*}
so that, by \eqref{Z2-unequal-weights},
	\eqn{
	Z_G(q,\bw)\leq q^{\kL(G)}Z^{\sss (2)}_G(q,\bw).
	}
The lower bound $Z_G(q,\bw)\geq Z^{\sss (2)}_G(q,\bw)$ is similar but simpler than the above upper bound, so we omit the proof. 
\smallskip

For (ii),  we say that a connected component of $G_A$ is  {\em large} when it contains a cycle of length at least $k$, and small  otherwise. It is clear that there are at most $|V(G)|/k$ large cyclic components. On the other hand, the number of small cyclic components  is smaller than the maximal number of disjoint cycles of length at most $k-1$, and thus is smaller than $\sum_{i=2}^{k-1}\kL_i(G)$.    
\end{proof}

\subsubsection{Random cluster model on augmented graphs} 
\label{sec-|R|C-augmented}
We can consider the random cluster model  with external field on a finite graph $G$ as the random cluster model without external field in an augmented graph $G^\star$. It is here that we will use Theorem \ref{propzlg}, since the edge weights now are naturally different for the edges to the extra vertex, and the ones that are inside $G$ itself. Let us start by introducing the augmented graph.
\smallskip

Given a finite graph $G=(V(G),E(E))$, we define an augmented graph $G^\star$ by adding, from every $v \in V(G)$, 
an edge to the additional \emph{ghost vertex} $v^\star$. That is, $G^\star=(V(G^\star), E(G^\star))$
for $V(G^\star):= V(G) \cup \{v^\star\}$ and $E(G^\star):= E(G) \cup \{(v, v^\star), v \in V(G)\}$. 
\smallskip

Given $w >0$, we define the new weight vector $\bw^{\star,B} = (w^{\star,B}_e)_{e \in E(G^\star)}$, where 
	\eqn{
	w^{\star,B}_e = 
	\begin{cases}
	w \quad & \textrm{ if } e \in E(G),\\
	\e^{B}-1 & \textrm{ if } e \in E(G^\star) \setminus E(G).
	\end{cases}
	}
Define the random cluster model on $G^\star$ by 
	\eqn{
	\mu^{\bw^{\star,B}}_{G^\star}(\bomega^\star) = \frac{W^{\bw^{\star,B}}(\bomega^\star)}{Z_{G^\star}(q,\bw^{\star,B})},
	\qquad \text{where} \qquad W^{\bw^{\star,B}}(\bomega^\star) = q^{k(\bomega^\star)} \prod_{e \in E(G^\star)} (w_e^{\star,B})^{\bomega^\star_e}.
	}
We have the following lemma that shows that the random cluster model with external field is related to the random cluster model on the augmented graph:

\begin{lemma}[Augmented random cluster model]
\label{lem:wstb}
Writing $\bomega^\star=(\bomega,\bxi)$ with $\bomega \in \{0,1\}^{E(G)}$ and $\bxi \in \{0,1\}^{E(G^\star) \setminus E(G)}$, the marginal on $\bomega$ is the random cluster measure on $G$ with external field $B$ defined in \eqref{rcb}. More precisely,
\eqn{
\sum_{\bxi \in \{0,1\}^F} W^{\bw^{\star,B}} (\bomega,\bxi) = q\e^{Bn} W(\bomega), \qquad Z_{G^\star}(q,\bw^{\star,B})  =q \e^{Bn}  Z_G(q,w,B).
}
\end{lemma}

\begin{proof} Write $F=E^\star \setminus E$. Recall that $\bomega^\star=(\bomega,\bxi)$ with $\bomega \in \{0,1\}^E$ and $\bxi \in \{0,1\}^{E^\star \setminus E}$. Since $F=\{(v^\star,v)\colon v \in V(G)\}$, we identify $(\xi_e)_{e \in F} \equiv (\xi_v)_{v \in V(G)}$. Given $\bomega \in \{0,1\}^{E(G)}$, let $\src(\bomega)= \{C_1, \ldots, C_k\}$ denote the set of connected components of $\bomega$. Then 
	\eqn{
	k(\bomega^\star)=|z(\bxi)|  + 1, \qquad z(\bxi) =\{ i \in [k]\colon \kC_i \cap \{v\colon \xi_v=1\} =\varnothing \}.
	}
Note that 
	\[
	q^{k(\bomega^\star)} = q(1+(q-1))^{|z(\bxi)|} = q \sum_{J \subset z(\bxi)} (q-1)^{|J|}. 
	\]
Moreover, $\bxi|_{C_i} \equiv 0$ if  $i  \in z(\bxi)$.   Therefore, denoting $V^-_J = \cup_{i \not \in J} C_i$,
	\eqan{
	\sum_{\bxi \in \{0,1\}^{V(G)}} W^{w^{\star,B}} (\bomega,\bxi) &= q \prod_{e \in E(G)} w^{\omega_e} \sum_{\bxi \in \{0,1\}^{V(G)}} 
	\sum_{J \subset z(\xi)} (q-1)^{|J|} \prod_{v \in V(G)} (\e^B-1)^{\xi_v}\\
	&= q \prod_{e \in E(G)} w^{\omega_e} \sum_{J \subset [k]} (q-1)^{|J|} \sum_{\substack{\bxi \in \{0,1\}^{V(G)}\\ J \subset z(\bxi) }}
	\prod_{v \in V(G)} (\e^B-1)^{\xi_v}\nn,
	}
since $V^-_J = \cup_{i \not \in J} C_i$. Thus,
 	\eqan{ 
	\sum_{\bxi \in \{0,1\}^{V(G)}} W^{w^{\star,B}} (\bomega,\bxi) &= q \prod_{e \in E(G)} w^{\omega_e} 
	\sum_{J \subset [k]} (q-1)^{|J|} \sum_{\bxi \in \{0,1\}^{V^-_{J}}}\prod_{v \in V^-_J} (\e^B-1)^{\xi_v}\\
    	&= q \prod_{e \in E(G)} w^{\omega_e} \sum_{J \subset [k]} (q-1)^{|J|} \prod_{i \not  \in J} \e^{B|C_i|}\nn\\
    	&= q \e^{B|V(G)|} \prod_{e \in E(G)} w^{\omega_e} \sum_{J \subset [k]}  \prod_{i \in J} (q-1)\e^{-B|C_i|}
	\nn\\
    	&= q \e^{B|V(G)|} \prod_{e \in E(G)} w^{\omega_e} \prod_{i =1}^k (1+(q-1)\e^{B|C_i|})= q\e^{Bn} W(\bomega).\nn
    	}
\end{proof}

\begin{proof}[Proof of Theorem \ref{thm-B>0-alt}] By Lemma \ref{lem:wstb} and Theorem \ref{propzlg}, 
	\eqn{
	\frac{Z^{\sss (2)}_{G^\star}(q,\bw^{\star,B})}{q\e^{B|V(G)|}} \leq  Z_G(q,w,B) = \frac{Z_{G^\star}(q,\bw^{\star,B})}{q \e^{B|V|}} \leq q^{\kL(G^\star)} 	
	\frac{Z^{\sss (2)}_{G^\star}(q,\bw^{\star,B})}{q\e^{B|V(G)|}}.
    	}
Moreover, 
	\eqan{
	Z^{\sss (2)}_{G^\star}(q,\bw^{\star,B})&= \sum_{S\subseteq V(G^\star)}\prod_{e \in E(S)}(1+w^{\star,B}_e)(q-1)^{|V(G)|-|S|} \prod_{e \in 
	E(V^\star\setminus S)}\left(1+\frac{w^{\star,B}_e}{q-1}\right)\\
	& = \sum_{S \subseteq V(G)} \ldots + \sum_{\substack{v^\star \in S \subseteq V^\star}} \ldots =: (I) + (II).\nn
	}
 Observe that if $S \subseteq V(V)$, then  $w^{\star,B}_e=w$ for all $e \in E(S)\cup E(V\setminus S)$, and $w^{\star,B}_e=\e^B-1$ for $e=(v^\star,v)$ with $v \in V(G)\setminus S$. Thus,
 	\eqan{
	(I) &= \sum_{S\subseteq V(G)} (1+w)^{|E(S)|} (q-1)^{|V|+1-|S|}\left(1+\frac{w}{q-1}\right)^{|E(V\setminus S)|} 
	\left(\frac{\e^B+q-2}{q-1}\right)^{|V(G)|-|S|}\\
	& =(q-1) \sum_{S\subseteq V(G)} (1+w)^{|E(S)|} \left(1+\frac{w}{q-1}\right)^{|E(V(G)\setminus S)|} (\e^B+q-2)^{|V(G)|-|S|}.\nn
	}
Similarly, 
	 \eqan{
 	(II) &= \sum_{S\subseteq V(G)} (1+w)^{|E(S)|} \e^{B|S|} (q-1)^{|V|-|S|}\left(1+\frac{w}{q-1}\right)^{|E(V(G)\setminus S)|}.
	}
 Combining these computations with the fact that $\kL(G^\star) \leq \kL(G)+1$, we obtain the claim in Theorem \ref{thm-B>0-alt}.
\end{proof}

\invisible{
\section{Analysis of two spin models: Proof of Lemma \ref{lem-which-one-wins?}} We aim to compare $Z_G(\psi,\overline{\psi}_1)$ and $Z_G(\psi,\overline{\psi}_2)$, where $\psi,\overline{\psi}_1, \overline{\psi}_2$ are given in  \eqref{psi-def} and \eqref{overline-psi-def}, and 
 \eq{
 	Z_G(\psi,\overline{\psi}) = \sum_{\bsigma\in \{-1,1\}^{V(G)}} \prod_{v\in V(G)}  \overline{\psi}(\sigma_v) 
 	\prod_{\{u,v\}\in E(G)} \psi(\sigma_u,\sigma_v). 
}
Since $\psi$ is symmetric 
\eqn{ \label{2zg}	 
 2Z_G(\psi,\overline{\psi})	= \sum_{\bsigma\in \{-1,1\}^{V(G)}} A(\overline{\psi},\bsigma)  
 	\prod_{\{u,v\}\in E(G)} \psi(\sigma_u,\sigma_v), 
 }
\Hao{Equation \eqref{2zg} is not correct! Because, $\psi(+, +) \neq \psi(-,-)$, and so  $\psi(\sigma_u,\sigma_v) \neq \psi(-\sigma_u,-\sigma_v)$.  }

where 
\eqa{
A(\overline{\psi},\bsigma) &= \prod_{v\in V(G)}  \overline{\psi}(\sigma_v) + \prod_{v\in V(G)} \overline{\psi}(-\sigma_v) = \overline{\psi}(+)^{k(\bsigma)}\overline{\psi}(-)^{n-k(\bsigma)} +\overline{\psi}(+)^{n-k(\bsigma)}\overline{\psi}(-)^{k(\bsigma)}, 
}
with 
\eq{
k(\bsigma)=|\{v: \sigma_v=+ \}|.
}
We next compare $A(\overline{\psi}_1, \bsigma)$ and $A(\overline{\psi}_2, \bsigma)$.  Observe that 
	\eq{
	\overline{\psi}_1(+) +	\overline{\psi}_1(-)=\overline{\psi}_2(+) +	\overline{\psi}_2(-)=m,  \quad \textrm{where} \quad 	m:=q+\e^B-1.
	} 
Therefore, given $k=k(\bsigma)$,
\eqn{
A(\overline{\psi}_1,\bsigma)= f(\e^B), \quad A(\overline{\psi}_2,\bsigma)= f(1),
}
where
\eq{
f(x):= x^k(m-x)^{n-k}+ x^{n-k}(m-x)^{k}.
}
\Hao{The computation of $f'(x)$ is not correct}
Since  
\eq{
	f'(x)= \frac{n f(x)}{x(m-x)} (m-2x),
}
 we have $f(x) > f(y)$ whenever $x$ is closer to $m/2$ than $y$, or equivalently 
 \eq{
0 < (y-m/2)^2 - (x-m/2)^2 = (y-x) (y+x-m).
}
 Therefore, 
 \eq{
 A(\overline{\psi}_1,\bsigma)= f(\e^B) >  A(\overline{\psi}_2,\bsigma)= f(1) \qquad \Leftrightarrow \qquad (\e^B-1)(q-2) > 0 \quad \Leftrightarrow\quad B(q-2)>0.
}
 Otherwise, $A(\overline{\psi}_1,\bsigma) <  A(\overline{\psi}_2,\bsigma)$ if $B(q-2)<0$ and $A(\overline{\psi}_1,\bsigma) =  A(\overline{\psi}_2,\bsigma)$ if $B(q-2)=0$. Since these inequalities hold for all $\bsigma$, using \eqref{2zg} we have $	Z_G(\psi,\overline{\psi}_1)>	Z_G(\psi,\overline{\psi}_2)$ if $B(q-2)>0$, while $	Z_G(\psi,\overline{\psi}_1)<	Z_G(\psi,\overline{\psi}_2)$ if $B(q-2)<0$ and $	Z_G(\psi,\overline{\psi}_1)=	Z_G(\psi,\overline{\psi}_2)$ if $B(q-2)=0$.
 \qed}

\subsubsection{Final representation of partition function}
Note that now	
	\eqn{
	\label{h-choice-1}
	h_1=\frac{1}{2} \log (\overline{\psi}_1(+)/\overline{\psi}_1(-))=\frac{1}{2} \log (\e^B/(q-1)).
	}
and
	\eqn{
	\label{h-choice-2}
	h_2=\frac{1}{2} \log (\overline{\psi}_2(+)/\overline{\psi}_2(-))=-\frac{1}{2} \log(q+\e^B-2).
	}
The above computations lead to the following corollary:

\begin{corollary}[Rewrite in terms of extended Ising models]
\label{thm-gen-Ising-alt}
Let $G=(V,E)$ be a finite graphs with $|V|=n$ vertices. For every $q,w,B$, $Z^{\sss (2)}_G(q,w,B)$ is equal to
	\eqan{
	\label{Z2-sum-eIsing}
	\frac{1}{q}  \e^{\beta^*|E(G)|}\Big((\e^{B}(q-1))^{n/2} Z_{G}^{\sss \rm eIsing}(\beta^*,k,h_1)
	&+(q-1)(q+\e^{B}-2)^{n/2} Z_{G}^{\sss \rm eIsing}(\beta^*,k,h_2)\Big),
	}
where $k$ is given in \eqref{k-choice}, $\beta^*$ in \eqref{beta-choice}, and $h_1$ and $h_2$ in \eqref{h-choice-1} and \eqref{h-choice-2}.
\end{corollary}
}

\invisibleRvdH{
\section{Related ideas, not for paper}
\subsection{Rewrite of rank-2 approximation using rank-2 matrices}
\label{sec-rank-2-matrices}
In this section, we consider the description in \cite[Section 3.2]{BenBorCsi23}, which considers spin models that are governed by rank-2 matrices. Recall that such rank-2 partition functions take on the form
	\eqn{
	\label{ZG-psi-defrep}
	Z_G(\psi,\overline{\psi})=\sum_{\bsigma\in \{+,-\}^{V(G)}}\prod_{v\in V(G)} \overline{\psi}(\sigma_v)\prod_{\{u,v\}\in E(G)} \psi(\sigma_u,\sigma_v).
	}
In fact, \cite[Section 3.2]{BenBorCsi23} considers this more generally, and allows for $\{+,-\}$ to be replaced by $[r]$ for a general $r$. The key assumption is that the matrix ${\bf M}=(\psi(i,j))_{i,j\in \{+,-\}}$ is rank-2. For our case, ${\bf M}$ is given in \eqref{psi-def}.
\smallskip

When ${\bf M}$ is rank 2, we can write it as ${\bf M}={\bf a}{\bf a}^T+{\bf b}{\bf b}^T$. An explicit computation shows that
	\eqn{
	\label{a-b-choices}
	{\bf a}=\left(\begin{matrix} \sqrt{1+\frac{w}{q}}\\
	\sqrt{1+\frac{w}{q}}
	\end{matrix}\right),
	\qquad
	{\bf b}=\left(\begin{matrix} \sqrt{\frac{(q-1)w}{q}}\\
	-\sqrt{\frac{w}{q(q-1)}}
	\end{matrix}\right),
	}
is a solution. However, when this is the case, also the vectors ${\bf a}(t)$ and ${\bf b}(t)$ given by, for $i\in \{+,-\}$,
	\eqn{
	a_i(t)=a_i\cos(t)+b_i\sin(t),
	\qquad
	b_i(t)=-a_i\sin(t)+b_i\cos(t)
	}
will be solutions for every $t\in[0,2\pi]$. This plays a central role in \cite{BenBorCsi23}. There, it is also shown that one can choose $t$ such that all coordinates in ${\bf a}(t)$ and ${\bf b}(t)$ are non-negative (see \cite[Lemma 3.2(ii)]{BenBorCsi23}). Note that ${\bf a}(0)={\bf a}$ and ${\bf b}(0)={\bf b}.$ It is convenient to choose a version where all coordinates are non-negative. For this, we take $t$ such that $b_-(t)=0$, which gives
	\eqn{
	\tan(t)=a_-/b_-=-\sqrt{\frac{(q-1)w}{w+q}},
	}
so that, with $c=\sqrt{\frac{(q-1)w}{w+q}}=-a_-/b_-$,
	\eqn{
	\sin(t)=-\frac{c}{\sqrt{c^2+1}},
	\qquad
	\cos(t)=\frac{1}{\sqrt{c^2+1}}.
	}
We conclude that
	\eqn{
	a_+(t)=a_+\cos(t)+b_+\sin(t)=b_+[\frac{a_+}{b_+}\cos(t)+\sin(t)]
	=0,}
and
	\eqn{
	a_-(t)=a_-\cos(t)+b_-\sin(t)=b_-[\frac{a_-}{b_-}\cos(t)+\sin(t)]
	=b_-\frac{-2c}{\sqrt{c^2+1}}=\frac{b_+}{q-1}\frac{2c}{\sqrt{c^2+1}},
	}
while $b_-(t)=0$ by construction, and
	\eqn{
	b_+(t)=-a_+\sin(t)+b_+\cos(t)
	=b_+[\frac{a_+}{b_+}\sin(t)+\frac{1}{q-1}\cos(t)]
	=b_+\Big[-\frac{c^2}{\sqrt{c^2+1}}+\frac{1}{\sqrt{c^2+1}}\Big]=b_+ \frac{1-c^2}{\sqrt{c^2+1}}.
	}
Thus, indeed, all coefficients are non-negative and some actually equal zero. We will denote this special $t$ by $t_0$.
\medskip

We compute 
	\eqan{
	Z_G(\psi,\overline{\psi})&=\sum_{\bsigma\in \{+,-\}^{V(G)}} \prod_{v\in V(G)} \overline{\psi}(\sigma_v)\prod_{\{u,v\}\in E(G)} [{\bf a}{\bf a}^T+{\bf b}{\bf b}^T]_{\sigma_u,\sigma_v}\\
	&=\sum_{A\subset E(G)} \sum_{\bsigma\in \{+,-\}^{V(G)}} \prod_{v\in V(G)} \overline{\psi}(\sigma_v)
	\prod_{\{u,v\}\in E(G)\setminus A} [{\bf a}{\bf a}^T]_{\sigma_u,\sigma_v}
	\prod_{\{u,v\}\in A} [{\bf b}{\bf b}^T]_{\sigma_u,\sigma_v}\nn\\
	&=\sum_{A\subset E(G)} \sum_{\bsigma\in \{+,-\}^{V(G)}} \prod_{v\in V(G)} \overline{\psi}(\sigma_v)
	\prod_{\{u,v\}\in E(G)\setminus A} a_{\sigma_u} a_{\sigma_v}
	\prod_{\{u,v\}\in A} b_{\sigma_u} b_{\sigma_v}\nn\\
	&=\sum_{A\subset E(G)} \prod_{v\in V(G)} \Big(\sum_{\sigma\in \{+,-\}} \overline{\psi}(\sigma) a_\sigma^{d_v-d_v(A)}b_\sigma^{d_v(A)}\Big),\nn
	}
where $d_v=d_v(G)$ is the degree of vertex $v\in V(G)$ in $G$, and $d_v(A)$ is the degree of $v$ in the subgraph $(V(G), A)$. 
\smallskip

We next interpret the sum over $A\subseteq E(G)$ as choosing bonds with probability $\tfrac{1}{2}$ independently, and thus write
	\eqan{
	\label{ZG-rank-2-formula}
	Z_G(\psi,\overline{\psi})&=2^{|V(G)|} \sum_{A\subset E(G)} 2^{-|V(G)|}\prod_{v\in V(G)} \Big(\sum_{\sigma\in \{+,-\}} \overline{\psi}(\sigma) a_\sigma^{d_v-d_v(A)}b_\sigma^{d_v(A)}\Big)\\
	&=2^{|V(G)|} \expec\Big[\e^{\sum_{v\in V(G)} F_G(v,\bomega)}\Big],\nn
	}
where $\bomega=(\omega_e)_{e\in E(G)}$ is such that each edge is kept independently with probability $p=\tfrac{1}{2}$, and
	\eqn{
	F_G(v,A)=\log\Big(\sum_{\sigma\in \{+,-\}} \overline{\psi}(\sigma) a_\sigma^{d_v-d_v(A)}b_\sigma^{d_v(A)}\Big).
	}
The formula \eqref{ZG-rank-2-formula} writes the partition function and an {\em exponential functional} of the random (sub)graph of $G$. Not only that, but the exponential function has a rather explicit form, in that it only depends on the {\em degrees} of the random subgraph of the edges in $E(G)$.
\smallskip

Note that, by \eqref{a-b-choices} and \eqref{overline-psi-def},
	\eqan{
	\sum_{\sigma\in \{+,-\}} \overline{\psi}(\sigma) a_\sigma^{d_v-d_v(A)}b_\sigma^{d_v(A)}
	&=a_+^{d_v-d_v(A)}b_+^{d_v(A)}+(q-1)\e^{-B}a_-^{d_v-d_v(A)}b_-^{d_v(A)}\\
	&=a_+^{d_v-d_v(A)}b_+^{d_v(A)}[1+(-1)^{d_v(A)}\e^{-B}].\nn
	}
Therefore,
	\eqn{
	F_G(v,A)=\tfrac{1}{2}(d_v-d_v(A))\log\Big(1+\frac{w}{q}\Big)+\tfrac{1}{2}(d_v(A))\log\Big(\frac{(q-1)w}{q}\Big)+\log[1+(-1)^{d_v(A)}\e^{-B}].
	}
\invisible{This works out slightly better for the choices ${\bf a}(t_0)$ and ${\bf b}(t_0)$, for which the above becomes
	\eqan{
	\sum_{\sigma\in \{+,-\}} \overline{\psi}(\sigma) a_\sigma(t_0)^{d_v-d_v(A)}b_\sigma(t_0)^{d_v(A)}
	&=b_+^{d_v}[\indic{d_v(A)=d_v}+(q-1)\e^{-B}\indic{d_v(A)=0}].\nn
	}
The fact that only $d_v(A)=d_v$ or $d_v(A)=0$ contribute comes from the fact that we can take $t$ close to the value $t_0$ for which $b_+(t_0)=a_-(t_0)=0$, and talking the limit of $t\rightarrow t_0$. We conclude that, for every $v\in V(G)$, either {\em all} edges incident to $v$ are in $A$, or all edges incident to $v$ are not in $A$.
This implies that, for each connected component of $G$, we either keep all edges in it, or none. That does seem related to a random cluster model.
\medskip}

Since $F_G(v,A)$ is a linear function of $d_v-d_v(A)$, $d_v(A)$ and the sign of $d_v(A)$, it may be possible to use general large-deviation techniques to prove that 
	\eqn{
	\frac{1}{n}\log \expec\Big[\e^{\sum_{v\in V(G)} F_G(v,\bomega)}\Big]
	\rightarrow \varphi(q,w,B),
	}
for some $\varphi(q,w,B)$ that should arise through an appropriate large deviation optimisation problem.
\medskip

Continuing along the same lines, it might be useful to also consider the zeros of the partition function, as in \cite[Theorem 3.15]{BenBorCsi23}. For this, it is useful to note that the results used for this are in \cite{Wagn09}, which extend to all graphs (not necessarily degree-regular ones). Worthwhile to check this out further.

\subsection{Extension to negative $B$}
\label{sec-negative-B}
In this section, we investigate to which extent our results extend to negative $B$. For this, we now write, as in \eqref{rcb}-\eqref{partition-function-B},
	\eqn{ 
	\label{rcb-negative-B}
	\mu_G(\bomega) = \frac{W(\bomega)}{Z_G(q,w,B)}, \qquad \text{and} \qquad
	W(\bomega)=\prod_{e\in E(G)} w^{\omega_e}
	\prod_{C\in\mathscr{C}(\eta)}
	(1+(q-1)\e^{-B|C|}),
	}
where the normalisation constant $Z_G(q,w,B)$ is given by
	\eqn{
	\label{partition-function-negative-B}
	Z_G(q,w,B)=\sum_{\bomega}\prod_{e\in E(G)} w^{\omega_e}
	\prod_{C\in\mathscr{C}(\bomega)}
	(\e^{B|C|}+(q-1)).
	}
It is this measure that we will focus on in this paper. The following lemma relates the partition function for the Potts model with external field $B$ to $Z_G(q,w,B):$

\begin{lemma}[Relation Potts and random cluster partition function for negative external field]
\label{pott=rc-negative-B}
Let $q\geq 2$ be an integer. Then, with $\beta =\log(1+w)$,
		\eq{
			Z_G^{\sss \rm Potts}(\beta,B) = Z_G(q,w,B).
	}
	\end{lemma}
As in Lemma \ref{zqrec}, we can again compare $Z_G(q,w,B)$ with $Z_{G\setminus S} (q-1,w)$:

\begin{lemma}[Reduction of $q$ and dependence on $B$] 
\label{zqrec-negative-B}
For all $q$ and $B\geq 0$,
	\eq{
	Z_G(q,w,B) = \sum_{S \subseteq V(G)} \e^{B|S|} (1+w)^{|E(S)|} Z_{G\setminus S} (q-1,w).	
	}
\end{lemma}
	
\proof Identical to Lemma \ref{zqrec}.
\qed
\medskip

Let now
	\eqn{
	\label{Z2-negative-B}
	Z^{\sss (2)}_G(q,w,B)=\sum_{S \subseteq V(G)} \e^{B|V(G)|} (1+w)^{|E(S)|} (q-1)^{|V(G)|-|S|}(1+w/(q-1))^{|E(V(G)\setminus S)|}.
	}
The following lemma extends Lemma \ref{lowerzq} to $B<0$:
\begin{lemma}[Rank-2 lower bound for negative $B$] 
\label{lowerzq-negative-B}
For all $q \geq 2$ and $B < 0$,
	\eqn{
	Z_G(q,w,B) \geq Z^{\sss (2)}_G(q,w,B).
	}
\end{lemma}

\begin{proof} Using \eqref{rank-1-lower-bound} and  Lemma \ref{zqrec}, now for $q-1\geq 1$, so that $q\geq 2$,
	\eqa{
	Z_G(q,w,B) &= \sum_{S \subseteq V(G)} \e^{B|S|} (1+w)^{|E(S)|} Z_{G\setminus S} (q-1,w)\\
	&\geq \sum_{S \subseteq V(G)} \e^{B|S|} (1+w)^{|E(S)|} (q-1)^{|V(G)|-|S|}(1+w/(q-1))^{|E(V(G)\setminus S)|}\\
	&= 	Z^{\sss (2)}_G(q,w,B).
}
\end{proof}

\begin{remark}[Implications of lower bound for regular graphs]
\label{rem-regular-negative-B}
{\rm Note that the {\em lower bound} is always true. We next look at consequences of this fact. If we can show that
	\eqn{
	\label{pressure-negative-B-a}
	\frac{1}{n}\log Z^{\sss (2)}_G(q,w,B)\rightarrow \varphi(w,B),
	}
as well as that
	\eqn{
	\label{pressure-negative-B-b}
	\frac{1}{n}\log \expec[Z^{\sss (2)}_G(q,w,B)]\rightarrow \varphi(w,B),
	}
where $\varphi(w,B)$ is the Bethe functional, then the results for random regular graphs extend to $B<0$. Indeed, by Lemma \ref{lowerzq-negative-B},
	\eqn{
	\frac{1}{n}\log Z_G(q,w,B)\geq \frac{1}{n}\log Z^{\sss (2)}_G(q,w,B)\rightarrow \varphi(w,B).
	}
Further, since the quenched pressure is bounded from above by the annealed one, also
	\eqn{
	\expec[\frac{1}{n}\log Z_G(q,w,B)]\leq \frac{1}{n}\log \expec[Z^{\sss (2)}_G(q,w,B)]\rightarrow \varphi(w,B).
	}
This implies that also 
	\eqn{
	\frac{1}{n}\log Z_G(q,w,B)\rightarrow \varphi(w,B).
	}
}\hfill \ensymboldefinition
\end{remark}

We next attempt to extend also the upper bound to negative $B$. 
\RvdH{This proof is incomplete!}

The next lemma extends Lemma \ref{upperzq} to $B<0$:

\begin{lemma}[Rank-2 upper bound for negative $B$]  
	\label{upperzq-negative-B}
	For all $q \geq 2$ and $B <0$,
		\eq{
		Z_G(q,w,B) \leq q^{\kL(G)}	Z^{\sss (2)}_G(q,w,B),	
	}
			where 
			\eq{ 
				\kL(G)= \max_{A \subseteq E} |\{ \textrm{\rm connected components of $G_A=(V,A)$ containing a cycle}\}|,
			}
	\end{lemma}
	
	\begin{proof} The crucial inequalities in the proof of Lemma \ref{upperzq} are extended as follows.
	We first extend \eqref{first-bound-rank-2} as
	\eqan{
	\label{first-bound-rank-2-negative-B}
	Z_G(q,w,B)&=\sum_{A \subseteq E(G)} w^{|A|} 
	\prod_{C\in\mathscr{C}(A)}
	(\e^{B|C|}+(q-1))\\
	& \leq q^{\kL(G)} \sum_{A \subseteq E(G)} w^{|A|} 
	\prod_{i\in \mathcal{S}_A }
	(\e^{B|V_i|}+(q-1)),\nn
	}
since $\e^{B|V_i|}+(q-1)\leq q$ for $B<0$. Then, we continue the computation in \eqref{second-bound-rank-2} as
\eqan{
		\label{second-bound-rank-2-negative-B}
		\prod_{i \in \mathcal{S}_A }
		(\e^{B|V_i|}+(q-1)) &= \sum_{I \subseteq \kS_A} (q-1)^{|I|} \e^{B \sum_{i \in \kS_A\setminus I} |V_i|}\\
		&=\sum_{R\sim A}(q-1)^{k(R,A\llbracket R \rrbracket)} \e^{B(|V(T_A)|-|R|)},\nn
		}
where $T_A$ is the number of vertices in tree components of $A$.
\invisible{
	\eqn{ \label{sumvi}
	 \sum_{i \in \kS_A\setminus I} |V_i|= \sum_{i \in \kS_A} |V_i|- \sum_{i \in I} |V_i|=|V(A)|-|R|.
	 }
 \Hao{There are two gaps here. First, in \eqref{second-bound-rank-2-negative-B}, the sign of $B$ should be positive: $\e^{B \sum_{i \in \kS_A \setminus I}|V_i|}$. Secondly, in \eqref{sumvi}, $\sum_{i \in \kS_A} |V_i| \neq |V(A)|$. In fact, it is the number of vertices in tree components of  $(V,A)$.}}
Therefore, continuing as in \eqref{third-bound-rank-2},
		\begin{align}
		\label{third-bound-rank-2-negative-B}
			Z_G(q,w,B)&\leq q^{\kL(G)}\sum_{A \subseteq E(G)}\sum_{R: R\sim A}(q-1)^{k(R,A\llbracket R\rrbracket)}w^{|A|} \e^{B(|V(T_A)|-|R|)}\\
			&=q^{\kL(G)}\sum_{R \subseteq V(G)} \e^{-B|R|}\sum_{A\colon R\sim A} (q-1)^{k(R,A\llbracket R\rrbracket)}  \e^{B |V(T_A)|} 
			w^{|A\llbracket R\rrbracket|}\times w^{|A\llbracket V(G)\setminus R\rrbracket|}\nn.
			\end{align}

			\begin{align}
			&=q^{\kL(G)}\sum_{R \subseteq V(G)} \e^{-B|R|} (1+w)^{|E(V\setminus R)|}\sum_{D}(q-1)^{k(R,D)}w^{|D|}\e^{-B |V(D)|} ,\nn
		\end{align}
where, in the last sum, $D$ is a subset of the edges on the subgraph $(R,E(R))$ that is such that $(R,D)$ is a forest, and $k(R,D)$ the number of connected components of the graph on $R$ with edges given by $D$.  Note that, since $(R,D)$ is a forest,
	\eqn{
	|V(D)|=|D|+k(R,D).
	}
Then
		\eqan{
		\label{fourth-bound-rank-2-negative-B}
			\sum_{D}(q-1)^{k(R,D)}w^{|D|}\e^{-B V(D)}&=\sum_{D}[\e^{B}(q-1)]^{k(R,D)}[\e^Bw]^{|D|}\\
			&=\sum_{D}[\e^{-B}(q-1)]^{|R|-|D|}[\e^{-B} w]^{|D|}\nn\\
			&= [\e^{-B}(q-1)]^{|R|} \sum_{D} (w/(q-1))^{|D|}\nn\\
			& \leq [\e^{-B}(q-1)]^{|R|} (1+w/(q-1))^{|E(R)|}.\nn
		}
Combining the last two estimates, and setting $S=V(G)\setminus R$, we obtain that
	\eqan{
	\label{final-bound-rank-2-negative-B}
	Z_G(q,w,B)&\leq q^{\kL(G)}\sum_{R \subseteq V(G)} (1+w)^{|E(V\setminus R)|}  (1+w/(q-1))^{|E(R)|}.
	}
\RvdH{The $B$ drops out in this?? Did I make an error along the way?}
\end{proof}}

\invisible{\subsection{Extension to negative external field}
\label{sec-negative-B-three-spin}
In this section, we give an alternative bound for the partition function in terms of a {\em three-spin} system. This approximation is valid for {\em all} external field $B$, but only for $q\geq 3$:
\RvdH{What is the resulting model? A 3 state Potts model?}

\begin{theorem}[Reduction to $3$-spin inhomogeneous Potts model] Assume that $q\geq 3$ and $B \in \R$. Then 
		\eqn{
		Z^{\sss (3)}_G(q,w,B)\leq Z_G(q,w,B)\leq q^{\kL(G)}Z^{\sss (3)}_G(q,w,B),
	}
	where $	\kL(G)$ is defined as in \eqref{mdcy}, and
	\eqn{	\label{Z-3-form}
		Z^{\sss (3)}_G(q,w,B) = \sum_{S_1,S_2,S_3 \in \kP_3(V) } \e^{-B(|S_1|+|S_2|)} (1+w)^{|E(S_1)|+|E(S_3)|}  \left(1+\frac{w}{q-2}\right)^{|E(S_2)|} (q-2)^{|S_2|}.	
	}
\end{theorem}
\begin{proof}
	Using Lemma \ref{zqrec} and writing $R=V\setminus S$ in the statement of this lemma, we obtain
	\eq{
	 Z_G(q,w,B) = \sum_{R \subset V(G)} \e^{-B|R|} (1+w)^{|E(V\setminus R)|} Z_{G(R)}(q-1,w),
}
where $G(R)$ is the subgraph of $G$ induced on $R$. Since $q-1 \geq 2$, using Theorem \ref{thm-BenBorCsi23}
\eq{
Z^{\sss (2)}_{G(R)} (q-1,w) \leq Z_{G(R)}(q-1,w) \leq (q-1)^{\kL(R)} Z^{\sss (2)}_{G(R)} (q-1,w),
}
where 
\eq{
Z^{\sss (2)}_{G(R)} (q-1,w) = \sum_{U \subset R} (1+w)^{|E(U)|} \left(1+\frac{w}{q-2}\right)^{|E(R\setminus U)|} (q-2)^{|R|-|U|}.
}
Plugging these estimates and setting $S_1=U$, $S_2=R\setminus U$, $S_3=V \setminus R$, we obtain the desired result.
\end{proof}}

\section{Random cluster model on random regular graphs}
\label{sec-Potts}
In this section, we prove the main results for the random cluster model on locally tree-like regular graphs. This section is organised as follows. In Section \ref{sec-assumption1.4-BasDemSly23}, we investigate the quenched and annealed random cluster measures on random regular graphs, and prove Theorem \ref{thm-assumption1.4-BasDemSly23}. In Section \ref{sec-|R|C-regular}, we investigate the nature of the phase transition, and prove Theorem \ref{thm-|R|C-regular}.

\subsection{Quenched equals annealed: Proof of Theorem \ref{thm-assumption1.4-BasDemSly23}}
\label{sec-assumption1.4-BasDemSly23}
In this section, we prove Theorem \ref{thm-assumption1.4-BasDemSly23} using the relation to the degree-regular configuration model. 

\begin{proof}[Proof of Theorem \ref{thm-assumption1.4-BasDemSly23}]
Combining Theorem \ref{thm-B>0}, Corollary \ref{thm-gen-Ising},  we obtain
	\eqn{ \label{compzg}
	\e^{\lambda n}  Z_{G_n}^{\sss \rm Ising}(\beta^*,kd+h) \leq Z_{G_n}(q,w,B) \leq \e^{\lambda n} \e^{\kL(G_n)}  Z_{G_n}^{\sss \rm Ising}(\beta^*,kd+h), 
	}
where 
	\eq{
	\lambda = \frac{1}{2}\beta^*d + \frac{1}{2} \log (\e^{-B}(q-1)), \qquad h= 	\frac{1}{2} \log (\e^{B}/(q-1)),
	}
and we recall that 
	\eq{
	\kL(G_n)= \max_{A \subseteq E(G_n)} |\{ \textrm{\rm connected components of $G_A=(V(G_n),A)$ containing a cycle}\}|.
	}
We claim that 
	\eqn{
	\label{blg}
	\pp(\kL(G) \geq 2 n/k_n) \leq \exp(-nk_n/2), \qquad k_n =\lfloor (\log n)^{1/4} \rfloor.
	}
Using \eqref{compzg} and \eqref{blg} then gives
	\eqn{
	\label{qzqb}
	\lim_{n \rightarrow \infty}	\frac{1}{n} \log   Z_{G_n}(q,w,B) = \lambda+  \lim_{n \rightarrow \infty} 	\frac{1}{n} \log  Z_{G_n}^{\sss \rm Ising}(\beta^*,kd+h). 
	}
\smallskip

It has been shown in \cite[Theorem 1]{DemMonSlySun14} or \cite[Proposition 3.2]{Can19} that the quenched and annealed Ising pressure per particle are equal, i.e., 
	\eqn{
	\label{qir}
	\lim_{n \rightarrow \infty} 	\frac{1}{n} \log   Z_{G_n}^{\sss \rm Ising}(\beta^*,kd+h) = \lim_{n \rightarrow \infty} 	\frac{1}{n}  \log \E[Z_{G_n}^{\sss \rm Ising}(\beta^*,kd+h)].
	}
On the other hand, using \eqref{blg} and the fact that  
	\eqn{
	Z_{G_n}^{\sss \rm Ising}(\beta^*,kd+h) \leq \exp(Kn),
	}
where $K=K(q,d,w,B)$ is a constant, we obtain
	\eqa{ 
	\E[q^{\kL(G_n)}  Z_{G_n}^{\sss \rm Ising}(\beta^*,kd+h)] &\leq q^{4 n/ k_n}\E[  Z_{G_n}^{\sss \rm Ising}(\beta^*,kd+h)] +  q^{2Kn} \pp(\kL(G_n) \geq 2n/k_n) \notag \\
	& \leq 2q^{4 n/ k_n}\E[  Z_{G_n}^{\sss \rm Ising}(\beta^*,kd+h)].
	}
This estimate, together with \eqref{compzg},  implies that
	\eqn{ \label{ezqb}
	\lim_{n \rightarrow \infty}	\frac{1}{n} \log  \E[ Z_{G_n}(q,w,B)] = \lambda+  \lim_{n \rightarrow \infty} 	\frac{1}{n} \log  \E[ Z_{G_n}^{\sss \rm Ising}(\beta^*,kd+h)]. 
	}
It follows from \eqref{qzqb}, \eqref{qir}, and \eqref{ezqb}  that
		\eq{
			\lim_{n \rightarrow \infty}	\frac{1}{n} \log   Z_{G_n}(q,w,B)=\lim_{n \rightarrow \infty}	\frac{1}{n} \log  \E[ Z_{G_n}(q,w,B)]. 
		}
Considering the Potts models, i.e., letting $q\geq 2$ be an integer, \cite[Theorem 3]{DemMonSlySun14} states that
		\eq{
			\lim_{n \rightarrow \infty}	\frac{1}{n} \log  \E[ Z_{G_n}(q,w,B)] = \Phi^\star(w,B),
		}
		where $\Phi^\star(w,B)$ is  the so-called {\em Bethe free energy}.  Therefore, the last two equations imply
		\eq{
			\lim_{n \rightarrow \infty}	\frac{1}{n} \log   Z_{G_n}(q,w,B) = \Phi^\star(w,B),
		}
which is indeed \cite[Assumption 1.4]{BasDemSly23}. Thus, it remains to prove \eqref{blg}. 
\medskip

\paragraph{\bf Proof of \eqref{blg}.} Observe that
	\eqn{ \label{lgli}
	\kL(G) \leq \frac{n}{k_n} + \sum_{i=2}^{k_n-1} \kL_i(G),
	}
where 
	\[
	\kL_i(G) = \max\{l\colon G \textrm{ contains $l$ vertex-disjoint cycles of length $i$} \}. 
	\]
Recall that we call a  cyclic connected  components of $G_A=(V(G),A)$ for $A\subseteq E(G)$  {\em large} if it contains  a cycle of length at least $k_n$, and small  otherwise. It clear that there are at most $n/k_n$ large cyclic components. On the other hand, the number of small cyclic components  is at most the maximal number of disjoint cycles of length at most $k_n-1$, and thus is at most $\sum_{i=2}^{k_n-1}\kL_i(G)$. It suffices to show that 	
	\eqn{
	\label{klig}
	\max_{2 \leq i \leq k_n-1}\pp\big( \kL_i(G_n) \geq n/k_n^2\big) \leq \exp(-n k_n).
	}
Indeed,  the desired result \eqref{blg} then directly follows from \eqref{lgli} and \eqref{klig} and a union bound. 
\medskip

To prove \eqref{klig}, we fix  $i \leq k_n$, set $m=\lfloor n/k_n^2 \rfloor$ and consider
	\eqn{
	\label{ligna}
	\pp(\kL_i(G_n) \geq m) \leq \sum_{S\subseteq V(G_n)\colon |S|=im} \pp(G_n(S)  \textrm{ has $m$ disjoint $i$ cycles}),
	}
where $G_n(S)$ is the induced subgraph of $G_n$ with vertex set $S$. We call a cycle an {\em $i$ cycle} when it has length $i$. Given $S \subseteq V(G_n)$ with $|S|=im$, we pick a vertex $v_1 \in S$ arbitrarily and observe that
		\eqan{
		\label{recursion-prel}
		&\pp(G_n(S)  \textrm{ has $m$ disjoint $i$ cycles})\nn\\
		& \leq \sum_{\substack{S_1 \subseteq S \setminus \{v_1\} \\ |S_1|=i-1}} \pp\Big(G_n(\bar{S}_1)  \textrm{ contains an $i$  cycle, and~}G_n(S\setminus\bar{S}_1)  \textrm{ has $m-1$ disjoint $i$ cycles}\Big)\nn\\
		& \leq \sum_{\substack{S_1 \subseteq S \setminus \{v_1\} \\ |S_1|=i-1}} \frac{(i-1)!\prod_{v \in \bar{B}_1} d_v}{\prod_{j=0}^{i-1}(\ell_n-2j+1)} 
		\times \pp\Big(G^{\sss(1)}_n(S\setminus\bar{S}_1)  \textrm{ has $m-1$ disjoint $i$ cycles}\Big),
		}
where 
		\[
		\bar{S}_1=S_1 \cup \{v_1\}, \quad \ell_n =\sum_{v \in V(G_n)} d_v =nd,
		\]
and $G^{\sss(1)}_n(S\setminus\bar{S}_1)$ is the configuration model on $[n]$ with degrees $((d_v-2)_{v \in \bar{S}_1}, (d_v)_{v \not \in \bar{S}_1}).$
\medskip

Applying \eqref{recursion-prel} recursively, we arrive at 
		\eqan{ \label{pgnai-a}
		\pp(G_n(S)  \textrm{ has $m$ disjoint $i$ cycles})  
		&\leq  \sum_{S_1, \ldots,S_m} \, \, \prod_{s=1}^m  \frac{(i-1)!\prod_{v \in \bar{S}_s}  d_v}{\prod_{j=0}^{i-1}(\ell_n-2j+1-2(s-1)i)}\nn\\
		&\leq  \sum_{S_1, \ldots,S_m} \, \, \prod_{s=1}^m  \frac{(i-1)!\prod_{v \in S} d_v}{\prod_{j=0}^{i-1}(\ell_n-2j+1-2(s-1)i)},
		}
where the sum is taken over all the possible tubes of disjoint sets $(S_s)_{s=1}^m \subseteq S$ such that $|S_s|=i-1$ for all $1 \leq s\leq m$, and we also used that 
	\[
	\prod_{s=1}^m \prod_{v\in \bar{S}_s}d_v =\prod_{v \in S} d_v.
	\] 
Note that there are at most 
		\[
		\binom{im}{i-1}^m \leq \frac{(im)^{(i-1)m}}{((i-1)!)^m}
		\]
such choices for $(S_s)_{s=1}^m$. Thus,
		\eqan{ \label{pgnai}
		\pp(G_n(S)  \textrm{ has $m$ disjoint $i$ cycles})  
		&\leq  \sum_{S_1, \ldots,S_m} \, \,  ((i-1)!)^m  \frac{\prod_{v \in S}  d_v}{(\ell_n/2)^{im}} \notag \\
		& \leq (im)^{(i-1)m} \frac{\prod_{v \in V(G_n)}d_v}{(\ell_n/2)^{im}} \leq  \frac{\prod_{v \in V(G_n)}  d_v}{(\ell_n/2)^{m}},
		}
where we also used that $\ell_n-2mi \geq \ell_n/2$ since $m i = o(n)$. Combining \eqref{ligna} and \eqref{pgnai} gives 
		\eqa{
		\pp(\kL_i(G_n) \geq m) \leq 2^n\exp \left( \sum_{v \in V(G_n)} \log d_v -m \log (\ell_n/2)  \right)   \leq \exp(-nk_n),
		}
since $m=\lfloor n/k_n^2 \rfloor$ and $k_n= \lfloor (\log n)^{1/4} \rfloor$, and $\sum_{v \in V(G_n)} \log d_v  = n \log  d$. This completes the proof of \eqref{blg}, and thus of Theorem \ref{thm-assumption1.4-BasDemSly23}.
\end{proof}

\subsection{The nature of the phase transition: Proof of Theorem \ref{thm-|R|C-regular}} 
\label{sec-|R|C-regular}
We first recall the form of the free energy of the Ising model on a random $d$-regular graph. We define (see \cite{Can19})
	\eqn{
	\label{L-beta-def}
	L(\beta,t)=t\log(1/t)+(1-t)\log(1/(1-t))+d\log{F_{\e^{-2\beta}}(t)},
	}
where
	\eqn{
	\label{F-f-ann-Ising-|R|GG-rep}
	F_b(t)=\int_0^{t\wedge (1-t)} \log{f_b(s)}ds,
	\quad
	\text{with}
	\quad
	f_{b}(s)=\frac{b(1-2s) +\sqrt{1+(b^2-1)(1-2s)^2}}{2(1-s)}.
	}
Then, by \cite[Theorem 1.1]{Can19},
	\eqn{
	\varphi^{\sss \rm Ising}(\beta,z)
	=\frac{\beta d}{2}+\sup_{t\in[0,1]} G(\beta,z,t), \qquad  G(\beta,z,t)= L(\beta,t)+z(2t-1).
	}
We summarise here some properties of the maximiser $t_{\beta,z}$ of the function $G$:
\begin{itemize}
	\item [(a)] If $z \neq 0$ then $t_{\beta,z}$ is unique. Moreover, the function $t_{\beta,z}$ is differentiable w.r.t.\ $(\beta, z)$ if $z \neq 0$. 
	\item [(b)] If $z>0$ then $t_{\beta,z}>1/2$ and $t_{\beta,-z}=1-t_{\beta,z}<1/2$. Moreover, 
	\eq{
t_{\beta,0^+} = 1- t_{\beta,0^-}  >1/2 \quad \textrm{if } \quad \beta >\beta^{\sss \rm Ising}_c; \qquad 	t_{\beta,0^+}= t_{\beta,0^-}   =1/2 \quad \textrm{if } \quad \beta \leq  \beta^{\sss \rm Ising}_c,
}
where 
 \eq{
  \beta^{\sss \rm Ising}_c = \frac{1}{2} \log \left( \frac{d}{d-2} \right).
}
\item [(c)] $\beta\mapsto \varphi^{\sss \rm Ising}(\beta,z)$ is differentiable in $\beta$ for all $(\beta,z)$.  On the other hand,  $\varphi^{\sss \rm Ising}(\beta,z)$ is differentiable in $z$ for all $(\beta,z)$ except for  $\beta > \beta_c^{\sss \rm Ising}$ and $z=0$:   
\eq{
\partial_z 	\varphi^{\sss \rm Ising}(\beta,0^+) =- \partial_z 	\varphi^{\sss \rm Ising}(\beta,0^-) >0.
} 
\end{itemize}
We remark that 
	\eqn{
	\varphi(q,w,B)= \lambda  + \varphi^{\sss \rm Ising}(\beta^*, z),
	}
where 
	\eq{
	\beta^*= \frac{1}{4} \log ((1+w)(1+w/(q-1))), \quad z=kd +h,\qquad 
	k=\frac{1}{4}\log \left(\frac{1+w}{1+w/(q-1)}\right), 
	}
and
	\eq{
	\label{lambda-h-def}
	\lambda = \frac{1}{2}\beta^*d + \frac{1}{2} \log (\e^{-B}(q-1)),  \qquad h=\frac{1}{2} \log (\e^B/(q-1)). 	
	}
\smallskip	
	
By observation (c), $z=0$ is  the only possible value of $z$ for which the derivatives of $\varphi$ might be discontinuous.  Solving $z=kd +h=0$, we obtain 
\eqn{ 
\label{wcb-first}
	w= w_c(B):= \ell_{q,d}(\e^{-2h}), \qquad  \ell_{q,d}(x)=\frac{x^{2/d}-1}{1-\tfrac{1}{q-1}x^{2/d}}. 
}
Also by observation (c), while $\partial_{\beta^*} \varphi$ is continuous in the whole regime,  $\partial_z \varphi$ is discontinuous when  $\beta^* > \beta_c^{\sss \rm Ising}$  and $z=0$, and  $\partial_z \varphi$ is continuous otherwise. 
\smallskip

Next, we aim to check the condition $\beta^*(w_c(B)) > \beta_c^{\sss \rm Ising}$. By a direct computation, for $q>2$,
	\eqn{
	\label{wbp}
	w_c'(B) = \frac{2(q-1)\e^{-4h/d}}{d(q-1-\e^{-4h/d})^2} (2-q) \partial_B h = \frac{(q-1)(2-q)\e^{-4h/d}}{d(q-1-\e^{-4h/d})^2}<0.
	}
Consequently, 
	\eqn{ 
	\label{wb0q}
	B\mapsto w_c(B) \textrm{ is strictly decreasing when $B>0$}.
	}
%\RvdH{How do we see that $B\mapsto w_c(B)$ is strictly increasing when $B<0$? \eqref{wbp} only proves the decreasing part...}
We claim that, for $q > 2$,
	\eqn{ 
	\label{bw0}
		w_c(0)>0, \qquad  g(w_c(0)) > \left( \frac{d}{d-2} \right)^2. 
	}
where 
	\eqn{
	\label{g-w-def}
		g(w)=\e^{4\beta^*(w)}= (1+w)(1+w/(q-1)).
	}
We defer the proof of \eqref{bw0} to the end of the proof. Since $w_c(0)>0$ and $w_c(\cdot)$ is strictly decreasing by  \eqref{wbp}, there exists a unique $B^+ >0$ such that 
	\eq{
		w_c(B^+)=0, \qquad w_c(B) > 0 \quad \textrm{ if and only if } 0 \leq B<B^+.
	}
Since  $g(\cdot)$ is increasing in $\R_+$, $g(w_c(B))$ is  decreasing in $0 \leq B<B^+$. Moreover, $g(w_c(B^+))=g(0)=1$ and $g(w_c(0)) > (\tfrac{d}{d-2})^2$. Therefore, there exists   $B_+ \in (0,B^+)$ such that 
	\eq{
	g(w_c(B_+))= \left(\frac{d}{d-2} \right)^2, \qquad 	g(w_c(B))>  \left(\frac{d}{d-2} \right)^2  \quad \textrm{ iff } 0 \leq B<B_+.
	}
Particularly, if $ 0 \leq B<B_+$,
	\eq{
		\beta^*(w_c(B)) = \frac{1}{4} \log g(w_c(B)) > \frac{1}{2} \log \left( \frac{d}{d-2} \right) = \beta_c^{\ss \rm Ising},
	}
and $\beta^*(w_c(B)) < \beta_c^{\ss \rm Ising}$ otherwise. 
	
In conclusion, if $q > 2 $ and $(w,B) \in \mathscr{C}$, where
	\eqn{
		\label{critcurve}
		\mathscr{C}=\{(w,B)\colon w=w_c(B)~\text{\rm for some } 0 \leq   B < B_+\},
	}
then the derivative $\partial_w \varphi$ is discontinuous, i.e., $\partial_w \varphi (w_c(B)^+,B) \neq \partial_w \varphi (w_c(B)^-,B)$, i.e., we have a first-order phase transition. 
We conclude that it remains to prove \eqref{bw0}. 
\medskip

\paragraph{\bf Proof of \eqref{bw0}.} Setting $x=q-1>0$ and $a=2/d \in (0,1)$, we have 
	\eq{
		w_c(0)=\ell_{q,d}(q-1) = \frac{x^a-1}{1-x^{a-1}}>0,
	}
since $x>1$ for $q>2$. By \eqref{g-w-def}, $g(w)=(1+w)(1+w/x)$, and substituting $w_c(0)=(x^a-1)/(1-x^{a-1})=x(x^a-1)/(x-x^{a})$, 
we obtain 
	\eqn{
	g(w_c(0))=\Big(1+\frac{x(x^a-1)}{x-x^{a}}\Big)\Big(1+\frac{x^a-1}{x-x^{a}}\Big)
	%=\Big(\frac{x-x^{a}+x(x^a-1)}{x-x^{a}}\Big)\Big(\frac{x-x^{a}+x^a-1}{x-x^{a}}\Big)
	=\frac{x^a(x-1)^2}{(x-x^a)^2}.
	}
Thus,
	\eqan{ \label{gwc0d}
		&g(w_c(0)) - \left( \frac{d}{d-2} \right)^2 \nn \\
		&= \frac{x^a(x-1)^2}{(x-x^a)^2} - \frac{1}{(1-a)^2}= \frac{1}{(1-a)^2(x-x^a)^2}[x^a(x-1)^2(1-a)^2-(x-x^a)^2] \nn\\
		&=\frac{x^{a/2}}{(1-a)^2(x-x^a)^2} \left(x^{a/2}(x-1)(1-a)+x-x^a\right) \left((x-1)(1-a) - x^{1-a/2}+x^{a/2} \right) \nn \\
		&=:\frac{x^{a/2}}{(1-a)^2(x-x^a)^2} A_1(x) A_2(x).
	}
Since $a=2/d \in (0,1)$,
	\eqn{ \label{sa1x}
	A_1(x)>0 \quad \textrm{ if } \quad x>1.
	}
	Moreover, 
	\eqa{
		A_2'(x) &= 1-a - (1-a/2)x^{-a/2} +  a x^{a/2-1}/2\\
		& = 1-a - x^{-a/2} + a(x^{-a/2} +   x^{a/2-1})/2 \\
		& \geq 1 - a - x^{-a/2} + a x^{-1/2}\geq 0,
	}
where, for the third line, we have used the Cauchy-Schwarz inequality in the form $u+v\geq 2\sqrt{uv}$, and the inequality that $1-a+ay-y^a \geq 0$ if $y>0$ and $a \in (0,1)$. In addition, $A_2(1)=0$. Therefore,
	\eq{
		A_2(x)>0 \quad \textrm{ if } \quad x>1.
	}
Since $x=q-1>1$ as $q>2$, this, together with \eqref{gwc0d} and \eqref{sa1x}, implies the desired result \eqref{bw0}.\qed

\section{Thermodynamic limit of extended Ising model: Proof of Theorem \ref{thm-termodynamic-limit-extended-Ising}}
\label{sec-termodynamic-limit-extended-Ising}
In this section, we give an overview of the proof of Theorem \ref{thm-termodynamic-limit-extended-Ising}. For fixed-sign parameters, this proof is a minor adaptation of the proof for the Ising model with constant external field. In fact, some of the crucial ingredients have been proved for general vertex-dependent external fields, so that this proof requires only minor modifications. Let 
	\eqn{
	\varphi^{\rm \sss eIsing}_n(\beta,k,h)=\frac{1}{n} \log Z_{G_n}^{\rm \sss eIsing}(\beta,k,h).
	}
In order to state the limit of the pressure per particle, we define the necessary quantities related to {\em message passing algorithms} and belief propagation, as already briefly discussed in Section \ref{sec-Bethe}.
\medskip

\paragraph{\bf Message passing quantities} Let $\bB=(B_v)_{v\in V(\Tree)}=(kd_v+h)_{v\in V(\Tree)}.$ Further, let $\Tree=(V(\Tree),E(\Tree))$ be the rooted random tree that arises as the local limit of our graph sequence. Fix such a rooted tree $\Tree$ with root $\vertex\in V(\Tree)$. Let $\beta\geq 0$ and $B_v=kd_v+h\geq B_{\min}>0$ for every $v\in V(\Tree)$. Let $\{x,y\}$ be an edge in $\Tree$. Then, let $\Tree_{x\rightarrow y}$ be the sub-tree of $\Tree$ rooted at $x$ which results from deleting the edge $\{x,y\}$ from $\Tree$. We write $(x,y)$ for the directed version of the edge $\{x,y\}$ that points from $x$ to $y$. 
\smallskip

Let $\sigma_x\mapsto \mu_{\Tree, x\rightarrow y}^{\sss\beta,\bB}(\sigma_x)$ be the marginal law of $\sigma_x$ for the Ising model on $\Tree_{x\rightarrow y}$ with inverse temperature $\beta$ and vertex-dependent external fields $\bB=(B_v)_{v\in V(\Tree)}$. We will mostly be interested in $(x,y)=(\vertex,j)$ or $(x,y)=(j,\vertex)$ for some child $j$ of $\vertex$. We emphasise that $\mu_{\Tree, x\rightarrow y}^{\sss\beta,\bB}(\sigma_x)$ are random variables when $\Tree$ is a random tree. 
\smallskip

\invisible{Below, we rely on two relations between the $\mu_{\Tree, x\rightarrow y}^{\sss\beta,\bB}(\sigma_x)$. The first is that the law $\mu_{\Tree, \vertex}^{\sss\beta,\bB}(\sigma)$ of the spin at the root of the tree can be obtained from $\mu_{\Tree, j\rightarrow \vertex}(\sigma_j)$ for all $j\in \partial \vertex$ as, for $\sigma\in \{+,-\}$,
	\eqn{
	\label{root-spin-tree-distribution}
	\mu_{\Tree, \vertex}^{\sss\beta,\bB}(\sigma)=\frac{\e^{B_\vertex\sigma} 
	\prod_{j\in \partial \vertex} \Big(\sum_{\sigma_j} \e^{\beta \sigma \sigma_j} \mu_{\Tree, j\rightarrow \vertex}^{\sss\beta,\bB}(\sigma_j)\Big)}{\sum_\sigma \e^{B_\vertex \sigma} 
	\prod_{j\in \partial \vertex} \Big(\sum_{\sigma_j} \e^{\beta \sigma \sigma_j} \mu_{\Tree, j\rightarrow \vertex}^{\sss\beta,\bB}(\sigma_j)\Big)}.
	}
Further, by the tree recursion and the fact that the Ising models on the subtrees at the root are independent, for $\sigma\in \{+,-\}$,
	\eqn{
	\label{tree-recursion-massage-passing}
	\mu_{\Tree, \vertex \rightarrow j}^{\sss\beta,\bB}(\sigma)
	=\e^{B_\vertex\sigma}\prod_{j'\in \partial \vertex\colon j'\neq j} \Big(\sum_{\sigma_{j'}} \e^{\beta \sigma \sigma_{j'}} \mu_{\Tree, j'\rightarrow \vertex}^{\sss\beta,\bB}(\sigma_j)\Big).
	}
Equations \eqref{root-spin-tree-distribution} and \eqref{tree-recursion-massage-passing} can be thought of as {\em messages} or {\em beliefs} that are being passed between vertices and their neighbours. This explains the terminology of {\em message passing}, or {\em belief propagation}.
\smallskip}

We let $\partial \vertex$ be the children of $\vertex\in V(\Tree)$, and define, for $\bB=(B_v)_{v\in V(\Tree)}=(kd_v+h)_{v\in V(\Tree)},$
	\eqn{
	\label{Phi-Tree-vertex}
	\Phi_{\Tree}^{\rm vx}(\beta,\bB)=\log \Big[\sum_\sigma \e^{B_\vertex\sigma} 
	\prod_{j\in \partial \vertex} \Big(\sum_{\sigma_j} \e^{\beta \sigma \sigma_j} \mu_{\Tree, j\rightarrow \vertex}^{\sss\beta,\bB}(\sigma_j)\Big)\Big],
	}
and 
	\eqn{
	\label{Phi-Tree-edge}
	\Phi_{\Tree}^{\rm e}(\beta,\bB)
	=\frac{1}{2} \sum_{j\in \partial \vertex} \log \Big[\sum_{\sigma, \sigma_j} 
	\e^{\beta \sigma \sigma_j} \mu_{\Tree, j\rightarrow \vertex}^{\sss\beta,\bB}(\sigma_j)\mu_{\Tree, \vertex\rightarrow j}^{\sss\beta,\bB}(\sigma)\Big].
	}
Our main result is then the following:

\begin{theorem}[Pressure per particle on locally tree-like graphs]
\label{thm-pressure-per-particle-locally-tree-like}
Consider a graph sequence $(G_n)_{n\geq1}$ of size $|V(G_n)|=n$ that converges locally in probability to the random rooted tree $\Tree$. Then, for every $\beta\geq 0$ and $B_v=kd_v+h\geq B_{\min}>0$ for every $v\in V(G_n)$,
	\eqn{
	\frac{1}{n}\log Z_{G_n}^{\rm \sss eIsing}(\beta,k,h)\convp \varphi^{\rm \sss eIsing}_n(\beta,k,h),
	}
where 
	\eqn{
	\label{varphi-general-formula}
	\varphi^{\rm \sss eIsing}_n(\beta,k,h)=\expec[\Phi_{\Tree}^{\rm vx}(\beta,\bB)-\Phi_{\Tree}^{\rm e}(\beta,\bB)\Big],
	}
and the expectation is w.r.t.\ the random rooted tree $\Tree$.
\end{theorem}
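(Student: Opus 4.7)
The plan is to mimic the classical Ising argument of Dembo and Montanari \cite{DemMon10a}, adapted as in \cite{Hofs25} to vertex-dependent external fields, exploiting crucially that the hypothesis $B_v = kd_v + h \geq B_{\min} > 0$ puts us in the high-field regime where a unique Gibbs measure exists on the limiting tree and spatial correlation decay is at our disposal.

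First, I would verify the identity at $\beta = 0$, where the model factorises: $Z_{G_n}^{\rm \sss eIsing}(0,k,h) = \prod_{v \in V(G_n)} 2\cosh(B_v)$, and in this case $\mu_{\Tree, j\to \vertex}^{0,\bB}(\sigma) = \e^{B_j \sigma}/(2\cosh(B_j))$, so \eqref{varphi-general-formula} reduces to $\E[\log(2\cosh(kD_\vertex+h))]$, which is precisely $\lim_n \tfrac{1}{n}\sum_{v} \log(2\cosh(B_v))$ by uniform sparsity (Condition \ref{cond-uniform-sparse}). Next, I would differentiate in $\beta$:
\begin{align*}
\frac{\partial}{\partial \beta} \varphi_n^{\rm \sss eIsing}(\beta,k,h) = \frac{1}{n} \sum_{\{u,v\} \in E(G_n)} \langle \sigma_u \sigma_v \rangle_{G_n,\beta,\bB},
\end{align*}
and similarly differentiate the candidate limit \eqref{varphi-general-formula} using the recursions \eqref{Phi-Tree-vertex}--\eqref{Phi-Tree-edge}, which after standard algebra yields $\tfrac{1}{2}\E[D_\vertex \langle \sigma_\vertex \sigma_j \rangle_{\Tree,\beta,\bB}]$ where $j \in \partial\vertex$. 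Integrating in $\beta$ from $0$ reduces the theorem to showing
\begin{align*}
\frac{1}{n} \sum_{\{u,v\} \in E(G_n)} \langle \sigma_u \sigma_v \rangle_{G_n,\beta,\bB} \convp \frac{1}{2}\E[D_\vertex \langle \sigma_\vertex \sigma_j \rangle_{\Tree,\beta,\bB}].
\end{align*}

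The main substantive step is this correlation convergence. Fix a radius $r$ and let $B_r(v,G_n)$ denote the $r$-ball around $v$. On $B_r(v,G_n)$, impose the \emph{plus boundary condition} on the boundary spins and obtain a local Ising measure whose two-spin correlation $\langle \sigma_u \sigma_v\rangle^{+}_{B_r(v,G_n)}$ approximates the true correlation up to an error that is handled by GKS/FKG-type monotonicity (the plus and free boundary correlations sandwich the true one). By local convergence (Condition \ref{cond-locally-tree-like}), $B_r(v,G_n)$ is close in distribution to $B_r(\vertex,\Tree)$, so bounded-radius correlations converge to the corresponding quantity on $\Tree$ truncated at depth $r$. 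To eliminate $r$, I would invoke \emph{exponential decay of correlations in a positive external field}: when $\min_v B_v \geq B_{\min} > 0$, the Griffiths-Simon-Lieb inequalities combined with the standard ``plus-boundary is extremal'' argument of \cite{DemMon10b,DemMonSlySun14} yield that $\langle \sigma_u \sigma_v \rangle_{\Tree,\beta,\bB}^{+} - \langle \sigma_u \sigma_v \rangle_{\Tree,\beta,\bB}^{-} \to 0$ uniformly in the graph, which uniquely identifies the infinite-volume limit as the tree Gibbs measure given by the message recursion.

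The hard part is the last step—passing from finite-depth tree approximations to the infinite tree uniformly in $n$, especially because the degree of $\vertex$ in $\Tree$ may be unbounded and the external field $kd_v + h$ grows with $d_v$. However, growth of $B_v$ only \emph{strengthens} correlation decay in the relevant direction (for $k \geq 0$), and the uniform sparsity in Condition \ref{cond-uniform-sparse} guarantees the relevant expectations over the root degree are finite, so I expect the truncation argument of \cite{DomGiaHof10, DemMonSlySun14} to carry over essentially verbatim. Once the correlation limit is established, integration in $\beta$ from $0$ to the desired value, combined with the base case, yields \eqref{varphi-general-formula} and completes the proof.
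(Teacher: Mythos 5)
Your plan is correct in outline, but it takes a genuinely different route from the paper's. You differentiate the pressure in $\beta$, obtaining the edge-averaged internal energy $\frac{1}{n}\sum_{\{u,v\}\in E(G_n)}\langle\sigma_u\sigma_v\rangle$, and integrate from the product-measure base case $\beta=0$; this is the classical Dembo--Montanari strategy of \cite{DemMon10a}. The paper instead differentiates in the field parameter $h$, obtaining the magnetisation $\frac{1}{n}\sum_{u}\langle\sigma_u\rangle$, sandwiches it via GKS between free and plus boundary conditions on a $t$-ball, passes to the local limit, invokes \cite[Lemma 3.1]{DomGiaHof10} for uniqueness under uniformly positive fields, and integrates from $h$ up to $\infty$, where the base case is the fully aligned regime. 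The trade-off is clear: your base case is trivial, but your derivative is a two-point function summed over edges, so the per-vertex contribution is bounded by the degree and you need the uniform integrability of $D_n$ supplied by Condition \ref{cond-uniform-sparse} to truncate high-degree vertices (worth making explicit); the paper's derivative is a one-point function bounded by $1$, at the cost of a less trivial $h\to\infty$ base case. Both routes share the same core ingredients --- GKS monotonicity, local convergence in probability, correlation decay/uniqueness under $B_v\geq B_{\min}>0$, and a final identification of the limit with the message-passing formula; your ``standard algebra'' step (stationarity of the Bethe functional at BP fixed points, so that the implicit $\beta$-dependence of the messages cancels when differentiating \eqref{varphi-general-formula}) is a genuine step that the paper likewise defers to \cite{Hofs25}. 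One small correction: on a general unimodular tree the $\beta$-derivative of the candidate limit is $\tfrac12\expec\big[\sum_{j\in\partial\vertex}\langle\sigma_\vertex\sigma_j\rangle_{\Tree}\big]$, a sum over the root's neighbours, rather than $\tfrac12\expec[D_\vertex\langle\sigma_\vertex\sigma_j\rangle_{\Tree}]$ for a single $j$, since the offspring subtrees need not be exchangeable.
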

\smallskip
	
Theorem \ref{thm-pressure-per-particle-locally-tree-like} obviously implies Theorem \ref{thm-termodynamic-limit-extended-Ising}, and gives an explicit formula for the limiting pressure per particle $\varphi(\beta,B)$. We next discuss its proof, for which, rather than studying  $\varphi^{\rm \sss eIsing}_n(\beta,k,h)$ directly, we study its derivative w.r.t.\ $h$. In more detail, we use that
	\eqn{
	\frac{\partial}{\partial h}\varphi^{\rm \sss eIsing}_n(\beta,k,h)
	=\frac{1}{n}\sum_{u\in V(G_n)} \langle\sigma_u\rangle_{\mu_n}
	=\expec\Big[ \langle\sigma_{\vertex_n} \rangle_{\mu_n}\mid G_n\Big],
	}
where $\mu_n$ is the extended Ising measure on $G_n$, i.e.,
	\eqn{
	\mu_n(\bsigma)=\frac{1}{Z_{G_n}^{\rm \sss eIsing}(\beta,k,h)}\exp \Big( \beta \sum_{\{u,v\}\in E(G_n)} \sigma_u \sigma_v + \sum_{u \in V(G_n)} B_v \sigma_u \Big),
	}
$\langle\cdot \rangle_{\mu_n}$ denotes the expectation w.r.t.\ $\mu_n$, and $\vertex_n\in V(G_n)$ is chosen uniformly at random. By the GKS inequality (see, e.g., \cite[Lemma 10.1]{Hofs25}), we can bound, for every $t\geq 0$, and using that $B_v=kd_v+h\geq B_{\min}>0$ for every $v\in V(G_n)$,
	\eqn{
	\label{local-weak-Ising-CM}
	 \langle\sigma_{\vertex_n} \rangle^f_{B_{\vertex_n}^{\sss(G_n)}(t)} \leq  \langle\sigma_{\vertex_n} \rangle_{\mu_n} 
	\leq  \langle\sigma_{\vertex_n} \rangle^+_{B_{\vertex_n}^{\sss(G_n)}(t)},
	}
where $B_{\vertex_n}^{\sss(G_n)}(t)$ is the $t$-neighbourhood of $\vertex_n\in V(G_n)$ in $G_n$, and $\langle\cdot \rangle^{+/f}$ denote the expectation w.r.t.\ the extended Ising model with + and free boundary conditions, respectively. 

We now rely on local convergence to obtain that 
	\eqn{
	\langle\sigma_{\vertex_n} \rangle^+_{B_{\vertex_n}^{\sss(G_n)}(t)}\convp \expec\big[\langle\sigma_{\vertex} \rangle^+_{B_{\vertex}^{\sss(G)}(t)}\big],
	}
where $B_{\vertex}^{\sss(G)}(t)$ is the $t$-neighbourhood of $\vertex\in V(\Tree)$ in the limiting rooted tree $G=(\Tree,\vertex).$ Key here is that our external fields are {\em local}, in that $B_v$ only depends on the degree of the vertex $v$, and not on any other properties of the pre-limit graph $G_n$. Similarly,
	\eqn{
	\langle\sigma_{\vertex_n} \rangle^f_{B_{\vertex_n}^{\sss(G_n)}(t)}\convp \expec\big[\langle\sigma_{\vertex} \rangle^f_{B_{\vertex}^{\sss(G)}(t)}\big],
	}
By \cite[Lemma 3.1]{DomGiaHof10}, $\expec\big[\langle\sigma_{\vertex} \rangle^+_{B_{\vertex}^{\sss(G)}(t)}\big]$ is close to $\expec\big[\langle\sigma_{\vertex} \rangle^f_{B_{\vertex}^{\sss(G)}(t)}\big]$ for $t$ large, whenever all external fields $B_v$ for $v\in V(\Tree)$ satisfy $B_v\geq B_{\min}>0$.  This shows that 
	\eqn{
	\frac{\partial}{\partial h}\varphi^{\rm \sss eIsing}_n(\beta,k,h)\convp \phi(\beta,k,h),
	}
for an appropriate $\phi(\beta,k,h)$. Theorem \ref{thm-pressure-per-particle-locally-tree-like} follows by combining two further crucial steps. First, convergence of the pressure per particle $\frac{1}{n} \log Z_{G_n}^{\rm \sss eIsing}(\beta,k,h)$ follows from integrating from $h_0$ to $\infty$, and considering what happens for $h$ being close to infinity (where all spins wish to be equal). For this part, we refer to the analysis in \cite[Section 12.2.3]{Hofs25}. Secondly, we then have to identify the limit by proving that $\phi(\beta,k,h)=\frac{\partial}{\partial h}\varphi^{\rm \sss eIsing}_n(\beta,k,h)$, and show that it can be described directly as a solution to the message-passing fixed-point equations (see also the discussion in Section \ref{sec-Bethe} on the Bethe partition function).
\medskip

We refrain from giving more details, and refer to \cite[Chapter 11]{Hofs25} instead. In particular, we can follow the proof of  \cite[Theorem 11.3]{Hofs25} almost verbatim. In turn, this theorem closely follows the proof by Dembo, Montanari and Sun \cite{DemMonSun13}.
\bigskip

\paragraph{\bf Acknowledgement.} The work of RvdH was supported in part by the Netherlands Organisation for Scientific Research (NWO) through the Gravitation {\small{\sf NETWORKS}} grant no.\ 024.002.003. The work of RvdH is further supported by the National Science Foundation under Grant No. DMS-1928930 while he was in residence at the Simons Laufer Mathematical Sciences Institute in Berkeley, California, during the spring semester 2025. RvdH thanks Amir Dembo and Lenka Zdeborova for enlightening discussions. The work of Van Hao Can is funded  by Vietnam National Foundation for Science and Technology Development (NAFOSTED) under grant number 101.03-2023.34.

%\section{Random cluster models with $q\in(1,2)$ on locally tree-like random graphs}
%\label{sec-|R|C(1,2)}

\bibliographystyle{abbrv}
%\bibliography{../bib/bibBooks}
\providecommand{\noopsort}[1]{}\def\cprime{$'$}

\end{document}